\theoremstyle{definition}
\newtheorem{definition}{Definition}
\theoremstyle{plain}
\newtheorem{thm}{Theorem}[section]
\newtheorem{theorem}{Theorem}
\newtheorem{lemma}{Lemma}
\theoremstyle{definition}
\newtheorem{remark}{Remark}
\numberwithin{equation}{section}
\title[Principal eigenvalue of an elliptic operator with large advection]{Asymptotics of the principal eigenvalue of a linear elliptic operator with large advection}
\author[Rui Peng]{Rui Peng${}^{\dag}$}
\author[Guanghui Zhang]{Guanghui Zhang${}^{\S,*}$}
\thanks{${}^\dag$School of Mathematical Sciences, Zhejiang Normal University, Jinhua, Zhejiang, 321004, China. (Email: {\tt pengrui\,$\b{}$\,seu@163.com})}
\thanks{${}^\S$School of Mathematics and Statistics, Huazhong University of Science and Technology, Wuhan,
430074, China. (Email: {\tt guanghuizhang@hust.edu.cn})}
\thanks{${}^*$Hubei Key Laboratory of Engineering Modeling and Scientific Computing, Huazhong University of Science and Technology, Wuhan 430074, China.}
\subjclass[2010]{35P15, 35J20, 35P20.}
 \keywords{Elliptic operator; principal eigenvalue; large advection;  asymptotics.}
\begin{document}

\begin{abstract}
Consider the eigenvalue problem of a linear second order elliptic operator:
 \begin{equation}
  \nonumber
  -D\Delta \varphi -2\alpha\nabla m(x)\cdot \nabla\varphi+V(x)\varphi=\lambda\varphi\ \ \hbox{ in }\Omega,
\end{equation}
complemented by the Dirichlet boundary condition or the following general Robin boundary condition:
 $$
 \frac{\partial\varphi}{\partial n}+\beta(x)\varphi=0 \ \ \hbox{ on  }\partial\Omega,
 $$
where $\Omega\subset\mathbb{R}^N (N\geq1)$ is a bounded smooth domain, $n(x)$ is the unit exterior normal to
$\partial\Omega$ at $x\in\partial\Omega$, $D>0$ and $\alpha>0$ are, respectively, the diffusion and advection coefficients, $m\in C^2(\overline\Omega),\,V\in C(\overline\Omega)$,  $\beta\in C(\partial\Omega)$ are given functions, and $\beta$ allows to be positive, sign-changing or negative.

In this paper, we aim to establish, as $\alpha$ approaches $\infty$, the asymptotic behavior of the principal eigenvalue under appropriate conditions on the advection function $m$. For $N=1$, we provide a complete characterization of the asymptotic behavior, assuming that the derivative of $m$ changes sign at most finitely many times. Our findings not only improve upon the previous work in \cite{BHN2005,CL2008,PZ2018}, but also partially address some of the open questions posed in \cite{BHN2005}. Furthermore, our results elucidate the novel influence of boundary conditions on such asymptotics.

\end{abstract}

\maketitle

\section{Introduction}
In this paper, we consider the following eigenvalue problem of a linear second order elliptic operator with Robin boundary condition:
\begin{equation}
  \label{eq:1.1}
  \left\{
    \begin{aligned}
      &-D\Delta \varphi-2\alpha\nabla m(x)\cdot\nabla\varphi+
      V(x)\varphi=\lambda\varphi &&\text{in }\Omega,\\
      &\frac{\partial\varphi}{\partial n}+\beta(x)\varphi=0 &&\text{on }\partial\Omega,
    \end{aligned}
    \right.
\end{equation}
where $\Omega\subset\mathbb{R}^{N} (N\geq1)$ is a bounded smooth domain, $n(x)$ is the unit exterior normal to
$\partial\Omega$ at $x\in\partial\Omega$, $V\in C(\overline{\Omega})$, $\beta\in
C(\partial\Omega)$, the advection function $m\in C^{2}(\overline{\Omega})$, and $D>0$ and $\alpha>0$ stand for, respectively, the diffusion and advection coefficients. It is worth mentioning that the function $\beta$ on the boundary condition allows to be positive, sign-changing or negative in our study.

We are also concerned with the eigenvalue problem with Dirichlet boundary condition:
\begin{equation}
  \label{eq:1.1-d}
  \left\{
    \begin{aligned}
      &-D\Delta \varphi-2\alpha\nabla m(x)\cdot\nabla \varphi+
      V(x)\varphi=\lambda\varphi &&\text{in }\Omega,\\
      &\varphi=0 &&\text{on }\partial\Omega
    \end{aligned}
  \right.
  \end{equation}
with $m,\,V$ and $\Omega$ fulfilling the same smoothness assumptions as before.

Given $D>0,\,\alpha,\,m$ and $V$, it is well known that \eqref{eq:1.1} (and \eqref{eq:1.1-d}) admits a smallest eigenvalue (also called as {\it principal eigenvalue}), denoted by $\lambda_1(\alpha)$, which corresponds to a positive eigenfunction (called as {\it principal eigenfunction}). For the existence and uniqueness of the principal eigenvalue of the eigenvalue problems \eqref{eq:1.1},  \eqref{eq:1.1-d} and \eqref{p} in Section 4, one may refer to \cite[Theorem 12.1]{Am83}, \cite[Theorem 2.2]{AmL98} or \cite{Du, FK2015, PZ2018}. The variational characterization of the principal eigenvalues of \eqref{eq:1.1} (i.e., \eqref{eq:1.2} below) and \eqref{p} (i.e., \eqref{4.1e} below), which will be used frequently later, is also a standard result. This can be easily proved similarly to the classical case of $\beta\geq0$ involving the use of the trace theorem (see \cite[Theorem 1.5.1.10]{Gr} or \cite[Lemma 3.1]{PZZ2019}).

The principal eigenvalue is a basic concept in the area of partial differential equations, and it has found many important applications in various kinds of nonlinear modelling problems arising from biology, chemical reactions, material sciences, etc; one may refer to \cite{ALL2017,CC2003,CL2008,CLPZ,CL,CLL,Du,DH2008_1,DH2008_2,DP2012,GN2013,KMP,
LL2019_1,LLou,LPZ2019,LLZ2022,PZ2015,PZ2016,ZZ2016,ZX2018} and the references therein.

The main focus of our work is to demonstrate how boundary conditions play a crucial role in the asymptotic behavior of the principal eigenvalue, particularly in the context of large advection rate or large/small diffusion rate. In \cite{PZZ2019}, we investigated the asymptotic behavior of the principal eigenvalue $\lambda_1(\alpha)$ of \eqref{eq:1.1} and \eqref{eq:1.1-d} as the diffusion coefficient $D$ goes to $0$ or $\infty$. In this paper, we aim to explore the asymptotic behavior of $\lambda_1(\alpha)$ as the advection coefficient $\alpha$ approaches $\infty$. 

Let us first recall some existing research works on the asymptotic behavior of the principal eigenvalue with respect to large advection closely related to \eqref{eq:1.1} and \eqref{eq:1.1-d}. In \cite{BHN2005}, Berestycki, Hamel and Nadirashvili considered the following eigenvalue problem subject to Neumnn boundary condition:
\begin{equation}\label{N-eig}
  \left\{
    \begin{aligned}
      &-D\Delta \varphi-2\alpha v(x)\cdot\nabla \varphi+V(x)\varphi=\lambda\varphi &&\text{in }\Omega,\\
      &\frac{\partial\varphi}{\partial n}=0 &&\text{on }\partial\Omega
    \end{aligned}
    \right.
\end{equation}
as well as the one subject to Dirichlet boundary condition:
 \begin{equation}\label{D-eig}
  \left\{
    \begin{aligned}
      &-D\Delta \varphi-2\alpha v(x)\cdot\nabla \varphi+V(x)\varphi=\lambda\varphi &&\text{in }\Omega,\\
      &\varphi=0 &&\text{on }\partial\Omega,
    \end{aligned}
    \right.
\end{equation}
where $v\in L^\infty(\Omega)$ is a divergence-free vector field;
that is,
 \begin{equation}\label{div-f}
{\rm div}\,v=0\ \  \, \mbox{ in}\ \mathcal{D}'(\Omega).
\end{equation}
Actually, \cite{BHN2005} dealt with a bit more general elliptic operator and also the associated space periodic eigenvalue problem. Note that if $v\in C^1(\overline\Omega)$ is a gradient field (i.e, $v=\nabla m$), then \eqref{N-eig} and \eqref{D-eig} reduce, respectively, to \eqref{eq:1.1} with $\beta=0$ and \eqref{eq:1.1-d}.

We now recall the following definition introduced in \cite{BHN2005}.
\begin{definition}\label{def-0}

\vskip5pt \item {\rm (i)} A function $u$ is said to be a {\bf first integral} of the vector field $v$ if
$u\in H^1(\Omega)$ and $v\cdot\nabla u=0$ almost everywhere in $\Omega$.

\vskip5pt
\item {\rm (ii)} Denote $\mathcal{I}$ (resp. $\mathcal{I}_0$) to be the set of all first integrals of $v$ which belong to $H^1(\Omega)$ (resp. $H^1_0(\Omega)$).

\end{definition}

Berestycki, Hamel and Nadirashvili derived the following important result for both \eqref{N-eig} and \eqref{D-eig};
see \cite[Theorems 2.1 and 2.2]{BHN2005}.

\begin{thm}\label{theorem-ND} Assume that \eqref{div-f} holds. The following assertions hold.

\item {\rm(i)} Let $\lambda_1(\alpha)$ be the principal eigenvalue of \eqref{N-eig} and assume that $v\cdot n=0$ in $L_{loc}^1(\partial\Omega)$. Then $\lambda_1(\alpha)$ is bounded and
  \[
    \lim_{\alpha\to\infty}\lambda_1(\alpha)=\min_{u\in \mathcal{I}}\frac{\int_\Omega{D|\nabla u|^2+V(x)u^2}}{\int_\Omega u^2}.
    \]

\item {\rm(ii)} Let $\lambda_1(\alpha)$ be the principal eigenvalue of \eqref{D-eig}. Then $\lim_{\alpha\to\infty}\lambda_1(\alpha)=\infty$ if and only if $v$ has no first integral in $H^1_0(\Omega)$, and if $v$ has a first integral in $H^1_0(\Omega)$, then
    \[
    \lim_{\alpha\to\infty}\lambda_1(\alpha)=\min_{u\in \mathcal{I}_0}\frac{\int_\Omega{D|\nabla u|^2+V(x)u^2}}{\int_\Omega u^2}.
    \]
\end{thm}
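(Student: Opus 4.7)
The plan is to prove both assertions by combining a soft weak-compactness argument that yields the lower bound on $\liminf_{\alpha\to\infty}\lambda_1(\alpha)$ with an adjoint-pairing argument that provides the matching upper bound. I first extract the following energy identity: multiplying the eigenvalue equation by $\varphi_\alpha$ and integrating, the advection contribution $-\alpha\int_\Omega v\cdot\nabla(\varphi_\alpha^2)\,dx$ vanishes by Gauss--Green, using $\mathrm{div}\,v=0$ together with either $v\cdot n=0$ on $\partial\Omega$ (case (i)) or $\varphi_\alpha=0$ on $\partial\Omega$ (case (ii)); this yields
\[
\lambda_1(\alpha)\int_\Omega\varphi_\alpha^2=D\int_\Omega|\nabla\varphi_\alpha|^2+\int_\Omega V\varphi_\alpha^2.
\]
For case (i), boundedness of $\lambda_1(\alpha)$ in $\alpha$ is immediate: pairing $u\equiv 1\in\mathcal{I}$ with the adjoint principal eigenfunction $\psi_\alpha$ gives $\lambda_1(\alpha)\int_\Omega\psi_\alpha=\int_\Omega V\psi_\alpha$, whence $\lambda_1(\alpha)\leq\max_{\overline\Omega}V$.

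Next I would establish the lower bound via weak compactness. Normalize $\|\varphi_\alpha\|_{L^2}=1$ so that the energy identity together with the upper bound on $\lambda_1(\alpha)$ forces $\varphi_\alpha$ to be bounded in $H^1(\Omega)$; along a subsequence, $\varphi_\alpha\rightharpoonup\varphi^*$ weakly in $H^1$ and strongly in $L^2$, with $\|\varphi^*\|_{L^2}=1$. To identify $\varphi^*\in\mathcal{I}$ (respectively $\mathcal{I}_0$), I would test the eigenvalue equation by $\eta/\alpha$ for $\eta\in C_c^\infty(\Omega)$ and use $\mathrm{div}\,v=0$ to transfer the drift gradient onto $\eta$; in the limit $\int_\Omega\varphi^*(v\cdot\nabla\eta)\,dx=0$, so $v\cdot\nabla\varphi^*=0$ distributionally, i.e., $\varphi^*\in\mathcal{I}$. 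Weak lower semicontinuity of the Dirichlet integral then yields
\[
\liminf_{\alpha\to\infty}\lambda_1(\alpha)\geq\min_{u\in\mathcal{I}}\frac{D\int_\Omega|\nabla u|^2+\int_\Omega Vu^2}{\int_\Omega u^2},
\]
and analogously in case (ii).

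For the matching upper bound I would exploit the adjoint equation. Since $L^*$ has divergence-free drift $-v$ and hence the same first-integral space, the weak-limit argument of the previous paragraph, applied to $\psi_\alpha$, produces a weak limit $\psi^*\in\mathcal{I}$ (respectively $\mathcal{I}_0$) with $\psi^*\geq 0$ and $\|\psi^*\|_{L^2}=1$. Let $u^*\in\mathcal{I}$ be a Rayleigh-quotient minimizer with value $\mu^*$; its existence follows from weak $H^1$-closedness of the linear constraint $v\cdot\nabla u=0$, and we may take $u^*\geq 0$ because $|u^*|$ is also a minimizer. Pairing $u^*$ with $L^*\psi_\alpha=\lambda_1(\alpha)\psi_\alpha$ and integrating by parts, the drift contribution vanishes thanks to $v\cdot\nabla u^*=0$ and $\mathrm{div}(u^* v)=0$, and the boundary contributions vanish (via $\partial_n\psi_\alpha=0$ and $v\cdot n=0$ in the Neumann case, or $u^*=0$ on $\partial\Omega$ in the Dirichlet case), giving
\[
\lambda_1(\alpha)\int_\Omega u^*\psi_\alpha=D\int_\Omega\nabla u^*\cdot\nabla\psi_\alpha+\int_\Omega Vu^*\psi_\alpha.
\]
Passing to the limit $\alpha\to\infty$ and using the constrained Euler--Lagrange identity $D\int\nabla u^*\cdot\nabla\psi^*+\int Vu^*\psi^*=\mu^*\int u^*\psi^*$ (valid because $\psi^*\in\mathcal{I}$ is an admissible variation of $u^*$), I obtain $\lim_{\alpha\to\infty}\lambda_1(\alpha)=\mu^*$ provided $\int_\Omega u^*\psi^*>0$.

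For case (ii) with $\mathcal{I}_0=\{0\}$, the weak-limit argument in the second step would yield $\varphi^*\in\mathcal{I}_0=\{0\}$, contradicting $\|\varphi^*\|_{L^2}=1$; therefore no bounded subsequence of $\lambda_1(\alpha)$ exists and $\lambda_1(\alpha)\to\infty$. I expect the main obstacle to lie in the upper-bound step: $u^*$ is not itself an eigenfunction, and ensuring $\int_\Omega u^*\psi^*>0$ (needed to divide in the limiting identity) requires strict positivity of $\psi^*$ in $\Omega$, which should follow from a Harnack inequality uniform in $\alpha$ but is delicate because the drift coefficient $\alpha|v|$ blows up. The Dirichlet subcase is further complicated by the fact that elements of $\mathcal{I}_0$ need not have vanishing normal derivative, so the integration-by-parts identity must be justified by approximating $u^*$ by smoother first integrals that still satisfy $v\cdot\nabla u=0$.
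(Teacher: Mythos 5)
This theorem is not proved in the paper you are looking at: it is imported verbatim from Berestycki--Hamel--Nadirashvili and cited as \cite[Theorems~2.1 and~2.2]{BHN2005}, so there is no in-paper argument to compare your proposal against. Judged on its own merits, the first half of your plan is solid: the energy identity $\lambda_1(\alpha)=D\int_\Omega|\nabla\varphi_\alpha|^2+\int_\Omega V\varphi_\alpha^2$ (for the normalized eigenfunction), the $H^1$-compactness that it buys once $\lambda_1(\alpha)$ is known to be bounded, the identification $\varphi^*\in\mathcal{I}$ (resp.\ $\mathcal{I}_0$) by testing with $\eta/\alpha$, the lower semicontinuity step, and the contradiction when $\mathcal{I}_0=\{0\}$ are all correct.

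The genuine gap is precisely where you flag it, and it is not a technicality that a Harnack inequality can close. Your adjoint-pairing identity only yields $(\lim\lambda_1)\int_\Omega u^*\psi^*=\mu^*\int_\Omega u^*\psi^*$, and $u^*\geq 0$, $\psi^*\geq 0$ do not force $\int_\Omega u^*\psi^*>0$: if $v$ vanishes on two disjoint open sets $\Omega_1,\Omega_2\subset\subset\Omega$, then $\mathcal{I}_0$ contains functions supported in either piece, and the minimizer $u^*$ and the weak limit $\psi^*$ can concentrate on different pieces, making the identity vacuous. A Harnack bound uniform in $\alpha$ is not available here because the drift coefficient $\alpha|v|$ is unbounded. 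There is also a circularity: compactness of $\psi_\alpha$ requires $\lambda_1(\alpha)$ bounded above, which for the Dirichlet problem is exactly what the upper-bound step is meant to produce. The standard route avoids the adjoint entirely: for $u\in\mathcal{I}$ nonnegative (take $u+\varepsilon$ in the Neumann case so it stays in $\mathcal{I}$ and is positive), test $-D\Delta\varphi_\alpha-2\alpha v\cdot\nabla\varphi_\alpha+V\varphi_\alpha=\lambda_1\varphi_\alpha$ against $u^2/\varphi_\alpha$, use the pointwise identity
\[
\nabla\varphi_\alpha\cdot\nabla\!\left(\frac{u^2}{\varphi_\alpha}\right)
=|\nabla u|^2-\Big|\varphi_\alpha\nabla\!\big(u/\varphi_\alpha\big)\Big|^2\le|\nabla u|^2,
\]
and kill the drift term using ${\rm div}\,v=0$, $v\cdot\nabla u=0$ and the boundary condition. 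This gives $\lambda_1(\alpha)\le \big(D\int_\Omega|\nabla u|^2+\int_\Omega Vu^2\big)/\int_\Omega u^2$ for every $\alpha$, no compactness or strict positivity of $\psi^*$ required; the Dirichlet case still needs a careful truncation of $u\in\mathcal{I}_0$, the second difficulty you anticipated.
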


As mentioned in \cite[Reamrk 2.5]{BHN2005}, in Theorem \ref{theorem-ND}(i), the extra assumption that $v\cdot n=0$ in $L_{loc}^1(\partial\Omega)$ is necessary in order to determine the limit $\lim_{\alpha\to\infty}\lambda_1(\alpha)$ of problem \eqref{N-eig}. On the other hand, for problem \eqref{D-eig}, Devinatz,  Ellis and Friedman \cite{DEF1974}, Friedman \cite{F1973} and Wentzell \cite{W1975} derived several nice results on the limiting behavior (even the asymptotic rate) of the principal eigenvalue as $\alpha\to\infty$; see \cite{DEF1974,F1973,W1975} for more details. Also pointed out by \cite[Reamrk 1.1]{BHN2005}, if the vector field $v$ is not divergence-free, Theorem \ref{theorem-ND}(ii) does not hold; indeed, \cite{F1973} showed that if $V=0$, $v$ is continuous on $\overline\Omega$ and $v\cdot n<0$ on $\partial\Omega$, then $\lambda_1(\alpha)=O(\alpha e^{-c_0\alpha})$ for some $c_0>0$ as $\alpha\to\infty$. Observe that Theorem \ref{theorem-ND}(ii) claims that $\lim_{\alpha\to\infty}\lambda_1(\alpha)$ is $+\infty$ or a positive constant when $V=0$ and $v$ is divergence-free. The main results of \cite{BHN2005} were extended in \cite{GGP2010,GGP2013} to the elliptic operator with an indefinite weight function in front of $\lambda$ in \eqref{eq:1.1-d}.

Recently, in \cite{CL2008}, as $\alpha\to\infty$, Chen and Lou studied the asymptotic behavior of $\lambda_1(\alpha)$ of the following eigenvalue problem with a gradient field and Neumann boundary condition (i.e., $v=\nabla m$ in \eqref{N-eig}):
\begin{equation}\label{eq:N}
\left\{
\begin{aligned}
  &-\Delta \varphi -2\alpha\nabla m(x)\cdot \nabla\varphi+V(x)\varphi=\lambda\varphi &&\hbox{ in }\Omega,\\
  &\frac{\partial\varphi}{\partial n}=0 &&\hbox{ on }\partial\Omega.
\end{aligned}
\right.
\end{equation}

In the following, we shall introduce the main result in \cite{CL2008}. To do so, we need to recall the definitions of critical points and local maxima for the advection function $m$ used in \cite{CL2008}.

\begin{definition}\label{def-1}
  Assume that $m\in C^{2}(\overline{\Omega})$.

\vskip5pt \item {\rm (i)} A point $x\in\Omega$ is called an \textbf{interior critical
    point} of $m$ if $|\nabla m(x)|=0$. An interior critical point $x$
  is called \textbf{non-degenerate} if $\text{det}(D^{2}m(x))\neq 0$.

\vskip5pt
\item {\rm (ii)} A point $x\in\partial\Omega$ is called a \textbf{boundary critical
    point} of $m$ if $|\nabla m(x)|=|n(x)\cdot\nabla m(x)|$. A boundary critical point $x$
  is called \textbf{non-degenerate} if either $|\nabla m(x)|=0$, $
  \text{det}\left(D^{2}m(x)\right)\neq 0$ or $|\nabla m(x)|\neq 0, \text{det}\left(D^{2}m_{\partial\Omega}(x)\right)\neq 0$. Here, $m_{\partial\Omega}(x)$ is the restriction of $m(x)$ on $\partial\Omega$.

 \vskip5pt
\item {\rm (iii)} A point $x\in\overline{\Omega}$ is call a \textbf{local maximum}
  of $m$ if $m(x)\geq m(y)$ for every $y$ in a small neighborhood of
  $x$, and there exists some sequence $\{r_{j}\}$ of positive numbers
  such that $\lim_{j\to\infty}r_{j}=0$ and
\[
  m(x)>\max_{y\in \overline{\Omega}\cap\partial B(x,r_{j}) }m(y),\ \
  \forall j\in\mathbb{N}.
\]
\end{definition}

From now on, let us denote
$$\Sigma=\{x\in\overline\Omega:\ x\ \,\mbox{is a local maximum of}\ m\}.$$
Chen and Lou established the following important result;
see \cite[Theorems 1.1]{CL2008}.

\begin{thm}\label{theorem-N} Assume that all critical points of $m$
  are non-degenerate. Let $\lambda_1(\alpha)$ be the principal eigenvalue of \eqref{eq:N}. Then
  \[
    \lim_{\alpha\to\infty}\lambda_1(\alpha)=\min_{x\in\Sigma}V(x).
    \]
\end{thm}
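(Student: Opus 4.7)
The plan rests on the self-adjointness of the operator in \eqref{eq:N} with respect to the weight $e^{2\alpha m}\,dx$: multiplying \eqref{eq:N} by $e^{2\alpha m}$ recasts it as $-\operatorname{div}(e^{2\alpha m}\nabla\varphi)+Ve^{2\alpha m}\varphi=\lambda e^{2\alpha m}\varphi$, so that
\[
\lambda_1(\alpha)=\inf_{\varphi\in H^{1}(\Omega)\setminus\{0\}}R_{\alpha}(\varphi),\qquad R_{\alpha}(\varphi):=\frac{\int_{\Omega}e^{2\alpha m}\bigl(|\nabla\varphi|^{2}+V\varphi^{2}\bigr)\,dx}{\int_{\Omega}e^{2\alpha m}\varphi^{2}\,dx}.
\]
Throughout, non-degeneracy of the critical points of $m$ will be exploited via Laplace's method so that the asymptotic analysis localizes at the (finitely many) critical points of $m$ in $\overline{\Omega}$, with the local maxima in $\Sigma$ emerging as the only relevant contributors.

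For the upper bound, fix $x_0\in\Sigma$. By Definition \ref{def-1}(iii), pick $r>0$ so small that $x_0$ is the only critical point in $\overline{B(x_0,r)}\cap\overline{\Omega}$ and $\delta:=m(x_0)-\max\{m(y):y\in\overline{\Omega}\cap(B(x_0,r)\setminus B(x_0,r/2))\}>0$. Test $R_\alpha$ with a smooth cutoff $\eta\in C^{\infty}(\overline{\Omega})$ satisfying $0\le\eta\le 1$, $\eta\equiv 1$ on $B(x_0,r/2)\cap\overline{\Omega}$ and $\eta\equiv 0$ outside $B(x_0,r)$: the non-degenerate quadratic expansion of $m$ at $x_0$ combined with Laplace's method yields $\int e^{2\alpha m}\eta^{2}\,dx\asymp c\,\alpha^{-\sigma}e^{2\alpha m(x_0)}$ for some $c>0$ and half-integer $\sigma>0$ (determined by whether $x_0$ is interior, a boundary critical point with $\nabla m(x_0)=0$, or a boundary critical point with $\nabla m(x_0)\neq 0$), and $\int V e^{2\alpha m}\eta^{2}/\int e^{2\alpha m}\eta^{2}\to V(x_0)$ by continuity of $V$. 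Since $|\nabla\eta|$ is supported where $m\le m(x_0)-\delta$, $\int e^{2\alpha m}|\nabla\eta|^{2}\,dx=O(e^{2\alpha(m(x_0)-\delta)})$, exponentially smaller than the denominator. Hence $R_\alpha(\eta)\to V(x_0)$, so $\limsup_{\alpha\to\infty}\lambda_1(\alpha)\le V(x_0)$, and minimizing over $\Sigma$ finishes the upper bound.

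For the lower bound, normalize $\varphi_\alpha>0$ by $\int_\Omega e^{2\alpha m}\varphi_\alpha^{2}\,dx=1$ and let $d\mu_\alpha:=e^{2\alpha m}\varphi_\alpha^{2}\,dx$, a probability measure on $\overline{\Omega}$. The eigenvalue identity $\lambda_1(\alpha)=\int_\Omega e^{2\alpha m}(|\nabla\varphi_\alpha|^{2}+V\varphi_\alpha^{2})\,dx$ gives $\lambda_1(\alpha)\ge\int_{\overline{\Omega}}V\,d\mu_\alpha$, so by compactness of $\overline{\Omega}$ it suffices to show that every weak-$*$ subsequential limit $\mu^{*}$ of $\{\mu_\alpha\}$ is a probability measure with $\operatorname{supp}\mu^{*}\subseteq\Sigma$; this yields $\liminf\lambda_1(\alpha)\ge\int V\,d\mu^{*}\ge\min_{\Sigma}V$. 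To prove the support statement, the gauge transformation $\psi_\alpha:=e^{-\alpha m}\varphi_\alpha$ satisfies $-\Delta\psi_\alpha+W_\alpha\psi_\alpha=\lambda_1(\alpha)\psi_\alpha$ with $W_\alpha:=\alpha^{2}|\nabla m|^{2}+\alpha\Delta m+V$ and Robin boundary condition $\partial_{n}\psi_\alpha+\alpha(\partial_{n}m)\psi_\alpha=0$. On any set $\{|\nabla m|\ge\delta_{0}\}$ one has $W_\alpha-\lambda_1(\alpha)\gtrsim\alpha^{2}\delta_{0}^{2}$ for large $\alpha$, and an Agmon/barrier argument yields exponential decay of $\psi_\alpha$ away from $\{|\nabla m|<\delta_{0}\}$, hence $\mu_\alpha(\{|\nabla m|\ge\delta_{0}\})\to 0$. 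The critical set is finite by non-degeneracy, so only finitely many candidates remain for $\operatorname{supp}\mu^{*}$. For each critical point $y\notin\Sigma$, Definition \ref{def-1}(iii) fails, so points arbitrarily close to $y$ carry values $m>m(y)$; a quadratic normal form of $m$ at $y$ combined with a Laplace-type comparison between the mass of $e^{2\alpha m}\varphi_\alpha^{2}$ near $y$ and near a nearby true local maximum of comparable $m$-value then forces $\mu^{*}(\{y\})=0$.

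The hard part is this final local competition at non-maximum critical points, notably saddles and boundary critical points whose gradient is outward-normal but whose tangential Hessian has mixed signature: the Laplace prefactor depends on the signature of $D^{2}m(y)$ (or $D^{2}_{\partial\Omega}m(y)$) and must be compared against the prefactor at a genuine local maximum of comparable $m$-value. The non-degeneracy hypothesis is precisely what isolates all critical points and makes each local prefactor explicitly computable, so this competition can be resolved; without it, critical points could collide or the local model could degenerate, and the present strategy would have to be substantially replaced.
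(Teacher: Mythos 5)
Your upper bound is correct and is essentially the argument of Lemma~\ref{upperbound} (you use Laplace's method where the paper uses a more elementary three-scale test function $\varphi_j$; both work, and the paper's version does not even require non-degeneracy at the local maximum). Your lower-bound architecture also matches the paper's: normalize $w_\alpha=e^{\alpha m}\varphi_\alpha$ so that the measures $d\mu_\alpha=w_\alpha^2\,dx$ have unit mass, pass to a weak limit $\mu$, and reduce to $\operatorname{supp}\mu\subset\Sigma$. But the two mechanisms you propose to establish the support statement do not close, and this is exactly where the real work in \cite{CL2008} (and in the proof of Theorem~\ref{nonnegative} here) lies.

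First, the ``Agmon/barrier'' step. You want $\mu_\alpha(\{|\nabla m|\ge\delta_0\})\to0$ from $W_\alpha-\lambda_1(\alpha)\gtrsim\alpha^2\delta_0^2$ there. This potential bound alone is not enough under the boundary condition inherited by $w_\alpha$, which (fixing your sign: it should be $w_\alpha=e^{\alpha m}\varphi_\alpha$) reads $\partial_n w_\alpha-\alpha(\partial_n m)w_\alpha=0$. Wherever $\partial_n m>0$ this is a Robin condition with parameter $-\alpha\,\partial_n m\to-\infty$, the regime in which Robin eigenfunctions are known to concentrate in a boundary layer rather than decay (cf.\ \cite{FK2015,PP2016}); the boundary term $\alpha\int_{\partial\Omega}(\partial_n m)w_\alpha^2$ it produces in any energy identity is $O(\alpha)\cdot(\text{trace})$ and cannot be absorbed by the $\alpha^2$ potential term using the trace inequality alone. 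This is not a technicality: the boundary critical points in $\Omega_4$ and $\Omega_5$ all have $|\nabla m|>0$, so they lie inside your set $\{|\nabla m|\ge\delta_0\}$, yet they are precisely the delicate cases that Chen--Lou (and the proof of Theorem~\ref{nonnegative}, which for those types simply invokes \cite[Lemmas 3.1--3.4, 4.1]{CL2008}) treat separately by exploiting the sign of $\partial_n m$ and, for $\Omega_5$, a tangential direction $e\perp n$ with $(e\cdot\nabla)^2m>0$. A blanket ``exponential decay where the potential is large'' statement does not distinguish these cases and does not hold there.

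Second, the ``Laplace-type comparison'' at non-maximum critical points is not an argument about the eigenfunction. The factor $e^{2\alpha m}$ is already absorbed into $w_\alpha$, so what must be controlled is where $w_\alpha^2$ concentrates; a Laplace prefactor comparison of $\int e^{2\alpha m}$ near $y$ versus near a local maximum is relevant only if $\varphi_\alpha$ is (known to be) approximately constant and comparable at the two points, which is not available and indeed is false in general. Moreover, ``a nearby true local maximum of comparable $m$-value'' need not exist. The mechanism in the paper (and in \cite{CL2008}) is entirely different: from the upper bound one gets the uniform energy bound $\int_\Omega|\nabla w_\alpha-\alpha w_\alpha\nabla m|^2\le C$ (cf.\ \eqref{bound-abc}), and then at each critical point that is not a local maximum one flows along a direction of strictly positive curvature of $m$ (interior: an $e$ with $(e\cdot\nabla)^2m>0$; boundary: the normal direction if $\partial_n m<0$, or a tangential direction with positive second derivative, as in Case~1/Case~2 of the proof of Theorem~\ref{nonnegative}) and derives that a point mass for $\mu$ would force this energy to blow up, a contradiction. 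This flow/co-area argument is the missing ingredient in your proposal; without it, the reduction to $\operatorname{supp}\mu\subset\Sigma$ is unsupported.

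Two smaller remarks: the gauge you write down, $\psi_\alpha=e^{-\alpha m}\varphi_\alpha$, does not satisfy the Schr\"odinger equation you state — the correct substitution is $w_\alpha=e^{\alpha m}\varphi_\alpha$ (as in \eqref{eq:3.3}), and the boundary condition then carries a minus sign. And your final paragraph, where you acknowledge that the ``local competition'' at saddles and mixed-signature boundary critical points is ``the hard part,'' is accurate — but that is precisely the part the proposal leaves unresolved.
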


We note that since all critical points of $m$ are non-degenerate, the set $\Sigma$ consists of finitely many isolated points. We further point out that in the setting of Theorem \ref{theorem-N}, as $m\in C^2(\overline\Omega)$, \eqref{div-f} becomes
$\Delta m=0$ on $\overline\Omega$, and the additional assumption that $v\cdot n=0$ in $L_{loc}^1(\partial\Omega)$ imposed in Theorem \ref{theorem-ND}(i) becomes $\frac{\partial m}{\partial n}=0$ on $\partial\Omega$. Thus, $m$ must be a constant and the advection term disappears in \eqref{eq:N}. This shows that Theorem \ref{theorem-ND}(i) does not apply to problem \eqref{eq:N} once $m\in C^2(\overline\Omega)$. However, Theorem \ref{theorem-ND}(i) and (ii) can allow the vector field $v$ to be degenerate so that it has a first integral; for instance, if $v$ vanishes on an open subset $\Omega_0\subset\subset\Omega$, clearly any nonzero function $u\in H_0^1(\Omega_0)$, extended by $0$ in $\Omega\backslash{\Omega_0}$, is a first integral of $v$.

It is also worth mentioning that Theorem \ref{theorem-ND} had been applied in \cite{BHN2005} to investigate the asymptotic behavior of the speeds of propagation of pulsating travelling fronts, and \cite{CL2008} applied Theorem \ref{theorem-N} to reveal new coexistence phenomena of a Lotka-Volterra reaction-diffusion model for two competing species, both with respect to the large advection coefficient.

More recently, in one space dimension (i.e., $\Omega$ is a finite open interval), when the advection term
$m$ admits nature kinds of degeneracy, the first-named author and Zhou \cite{PZ2018} studied an eigenvalue problem with a general boundary condition (see problem \eqref{p} in Section 4). A satisfactory understanding of the limiting behavior of the principle eigenvalue and its eigenfunction was obtained in \cite{PZ2018}. As far as time-periodic parabolic eigenvalue problems are concerned, in the past decades there have been several research works devoted to the study of the asymptotic behavior of the principal eigenvalue, especially with respect to small or large frequency, diffusion coefficient or advection coefficient in spatially or temporally heterogeneous and even degenerate environments. Some of these works include \cite{BHN2023,CKRZ2008,DT2016,Hess1991,HSV2001,HMP2001,LLou,LLPZ2019a,
LLPZ2019b,LLPZ2019c,PP2016,PZZ2019,PZ2015}.

\section{Statement of our main results} In this section, we will present the main results obtained in this paper.
Before going further, let us define the following three subsets of the local maxima set $\Sigma$ of $m$:
\[
  \Sigma_{1}=\Sigma\cap\Omega,\ \ \Sigma_{2}=\{x\in\Sigma\cap\partial\Omega:\ \ \beta(x)=0\},\ \ \Sigma_{3}=\{x\in\Sigma\cap\partial\Omega:\ \ \beta(x)<0\}.
\]
For the eigenvalue problem \eqref{eq:1.1}, we need the following assumption.

\vskip4pt
\noindent\textbf{(A1)}:\ \ Assume that for every $x_{0}\in\Sigma_{2}$, there exists a neighborhood $U$ of
$x_{0}$ such that $\beta(x)=0$ on $\partial\Omega\cap U$.

\vskip4pt
Concerning the eigenvalue problem \eqref{eq:1.1} with nonnegative $\beta$, our result reads as follows.

\begin{theorem}\label{nonnegative} Assume that {\rm(A1)} holds, $\beta\geq 0$ and all critical points of $m$ are non-degenerate. Let $\lambda_1(\alpha)$ be the principal eigenvalue of \eqref{eq:1.1}. Then the following assertions hold.

\item {\rm(i)} If $\Sigma_{1}\cup\Sigma_{2}\neq\emptyset$, we have
\[
\lim_{\alpha\to\infty}\lambda_1(\alpha)=\min_{x\in\Sigma_{1}\cup\Sigma_{2}}V(x).
\]

\item {\rm(ii)} If $\Sigma_{1}\cup\Sigma_{2}=\emptyset$, we have
\[
\lim_{\alpha\to\infty}\lambda_1(\alpha)=\infty.
\]
\end{theorem}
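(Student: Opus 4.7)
My plan uses the divergence-form symmetrization of \eqref{eq:1.1}. Writing $-D\Delta\varphi-2\alpha\nabla m\cdot\nabla\varphi=-e^{-2\alpha m/D}\mathrm{div}(e^{2\alpha m/D}D\nabla\varphi)$ and integrating against $e^{2\alpha m/D}\varphi$ yields the weighted Rayleigh characterization
\begin{equation*}
\lambda_1(\alpha)=\inf_{0\not\equiv\varphi\in H^1(\Omega)}\frac{\int_\Omega e^{2\alpha m/D}(D|\nabla\varphi|^2+V\varphi^2)\,dx+D\int_{\partial\Omega}e^{2\alpha m/D}\beta\varphi^2\,dS}{\int_\Omega e^{2\alpha m/D}\varphi^2\,dx}.
\end{equation*}
Because $\beta\geq 0$, the boundary term is a non-negative penalty, while non-degeneracy of the critical points of $m$ makes $\Sigma$ a finite set of isolated points around which the weight $e^{2\alpha m/D}$ develops sharp Laplace peaks as $\alpha\to\infty$.

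The upper bound in (i) will follow by testing the quotient at each $x_0\in\Sigma_1\cup\Sigma_2$ against a smooth cut-off $\psi_r$ equal to $1$ on $B(x_0,r/2)\cap\overline\Omega$ and supported in $B(x_0,r)\cap\overline\Omega$. Non-degeneracy gives the quadratic bound $m(x_0)-m(x)\geq c|x-x_0|^2$, so Laplace's method (on a half-ball when $x_0\in\Sigma_2$) yields $\int_\Omega e^{2\alpha m/D}\psi_r^2\,dx\asymp e^{2\alpha m(x_0)/D}\alpha^{-N/2}$, the potential contribution is $(V(x_0)+o_r(1))$ times this, and the gradient contribution, supported in the annulus $\{r/2\leq|x-x_0|\leq r\}$, is killed by an extra factor $e^{-c\alpha r^2/D}$. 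Assumption (A1) forces $\beta\equiv0$ on $\partial\Omega\cap B(x_0,r)$ for $x_0\in\Sigma_2$ and $r$ small, so the Robin integral in the test quotient vanishes. Sending $\alpha\to\infty$ then $r\to0$ and minimizing over $x_0$ gives $\limsup_\alpha\lambda_1(\alpha)\leq\min_{\Sigma_1\cup\Sigma_2}V$.

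For the lower bound I normalize the positive eigenfunction by $\int_\Omega e^{2\alpha m/D}\varphi_\alpha^2\,dx=1$ and set $d\mu_\alpha:=e^{2\alpha m/D}\varphi_\alpha^2\,dx$, a probability measure on $\overline\Omega$. The identity above, combined with $\beta\geq0$, gives $\lambda_1(\alpha)\geq\int_{\overline\Omega}V\,d\mu_\alpha$, so along any weakly-$*$ accumulation point $\mu_{\alpha_k}\rightharpoonup\mu$ one has $\liminf\lambda_1(\alpha_k)\geq\int V\,d\mu$ by continuity of $V$. The crux is to show $\mathrm{supp}\,\mu\subset\Sigma_1\cup\Sigma_2$. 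Introducing the semiclassical change of variables $\omega=e^{\alpha m/D}\varphi$ converts \eqref{eq:1.1} into the self-adjoint Schr\"odinger problem
\begin{equation*}
-D\Delta\omega+W_\alpha\omega=\lambda\omega,\qquad W_\alpha=\tfrac{\alpha^2}{D}|\nabla m|^2+\alpha\Delta m+V,\qquad \partial_n\omega+\bigl(\beta-\tfrac{\alpha}{D}\partial_n m\bigr)\omega=0,
\end{equation*}
and the harmonic rescaling $y=\sqrt{\alpha/D}(x-x_*)$ around any $x_*\in\overline\Omega\setminus(\Sigma_1\cup\Sigma_2)$ reveals three exclusion mechanisms: at a regular interior point $W_\alpha(x_*)\to+\infty$ at rate $\alpha^2$; at a non-maximal interior critical point, writing $H=D^2 m(x_*)$ with eigenvalues $\mu_i$, the local harmonic-oscillator ground state is shifted by $\alpha\sum(\mu_i+|\mu_i|)>0$ (zero precisely when all $\mu_i\leq 0$, i.e.\ at a local max); at a boundary local maximum with $\beta(x_*)>0$, comparing the $(N-1)$-dimensional Laplace trace asymptotics on $\partial\Omega$ with the $N$-dimensional volume asymptotics shows the Robin term inflates the quotient by a factor $\asymp\sqrt{\alpha}$. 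Each case contradicts the boundedness established in Step~2, forcing $\mathrm{supp}\,\mu\subset\Sigma_1\cup\Sigma_2$ and completing (i). For (ii), this support is necessarily empty, so no bounded subsequence of $\lambda_1(\alpha)$ can exist and $\lambda_1(\alpha)\to+\infty$.

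The main obstacle I anticipate is the boundary exclusion step: producing a quantitative Robin trace inequality that captures the $\sqrt{\alpha}$ penalty uniformly over $x_*\in\Sigma\cap\partial\Omega$ with $\beta(x_*)>0$, while simultaneously allowing the Neumann patch provided by (A1) to preserve all mass near $\Sigma_2$, is what distinguishes the Robin setting from the Neumann picture of Theorem \ref{theorem-N}. The interior harmonic-oscillator exclusion is a standard semiclassical computation in the spirit of \cite{CL2008}, but carefully matching the three local boundary regimes (tangentially non-critical local maxima, critical points with $|\nabla m|=0$, and boundary Robin behaviour) against a single global cut-off argument is the delicate quantitative step.
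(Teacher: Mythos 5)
Your overall framework — the transformation $w=e^{\alpha m}\varphi$, the two Rayleigh characterizations, the weak-$*$ limit probability measure $\mu$ and the reduction of the lower bound in (i) to showing $\mathrm{supp}\,\mu\subset\Sigma_1\cup\Sigma_2$, and the conclusion of (ii) by contradiction from $\mu(\overline\Omega)=1$ — matches the paper's proof exactly, and your upper-bound computation via cut-offs peaked at $x_0$ with the (A1) neighbourhood killing the Robin term is essentially Lemma~\ref{upperbound}. The interior exclusion ($\mu(\Omega_i)=0$ for the six categories of non-maximal points of $m$) is likewise the paper's move, deferred to \cite{CL2008}. All of that is fine.

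The genuine gap is the step you yourself flag as ``the main obstacle'': excluding concentration at boundary local maxima of $m$ with $\beta>0$, i.e.\ $\mu(\Sigma_4)=0$. You state this will follow because ``the Robin term inflates the quotient by a factor $\asymp\sqrt\alpha$'', but that claim is not established, and the announced rate is only right in the tangentially degenerate case $|\nabla m(x_0)|=0$; when $\partial_n m(x_0)=|\nabla m(x_0)|>0$ the Laplace comparison of the boundary trace to the bulk integral gives a factor of order $\alpha$, not $\sqrt\alpha$, so the taxonomy of the three boundary regimes is itself not quite matched to the phenomena. More importantly, a pure Laplace-asymptotics argument presupposes that $w_\alpha$ behaves like the local semiclassical ground state near $x_0$; it does not rule out eigenfunctions that are small at $\partial\Omega$ near $x_0$ (so that the trace integral stays bounded) while $\mu$ still charges $\{x_0\}$ from the interior. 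The paper closes this with an energy argument of a different shape: it observes that boundedness of $\int_{\partial\Omega}\beta w_\alpha^2$ (forced by \eqref{bound-a} and $\beta\geq0$) bounds the boundary trace of $w_\alpha^2$ near $x_0$, then shows that interior concentration of $w_\alpha^2$ plus a bounded boundary trace forces, via a fundamental-theorem-of-calculus/Cauchy--Schwarz estimate along an appropriate transverse direction (flattened coordinates in Case~1, a flow along the vector field of \cite[Lemma 3.1]{CL2012} in Case~2), that $\int_\Omega|\nabla w_\alpha-\alpha w_\alpha\nabla m|^2$ blows up, contradicting \eqref{bound-a}. This pincer — bounded trace together with interior mass implies unbounded gradient energy — is the mechanism that actually works; your asymptotic heuristic points at the same phenomenon but does not prove it.
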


\begin{remark}\label{re-00} {\it Regarding Theorem \ref{nonnegative}, we would like to make some comments as follows.
 \vskip4pt
\item[\rm (i)] Notice that if $\beta=0$ on $\partial\Omega$, then {\rm(A1)} holds automatically, and Theorem \ref{nonnegative} reduces to Theorem \ref{theorem-N} obtained by Chen and Lou.
 \vskip4pt
\item[\rm (ii)] In \cite[Page 478]{BHN2005}, Berestycki, Hamel and Nadirashvili proposed several interesting open questions, and one of them was to ask whether one can find a necessary and sufficient condition for the boundedness of the principal eigenvalue $\lambda_1(\alpha)$ with respect to large $\alpha$ for similar elliptic operators with Robin boundary condition. Our Theorem \ref{nonnegative} above and Theorem \ref{general} below make some progress towards such a question.

}

\end{remark}

For the Dirichlet eigenvalue problem \eqref{eq:1.1-d}, we have the following result.

\begin{theorem}\label{Dirichlet} Assume that  all critical points of $m$ are
  non-degenerate. Let $\lambda_1(\alpha)$ be the principal eigenvalue of \eqref{eq:1.1-d}. Then the following assertions hold.

\item {\rm(i)} If $\Sigma_{1}\neq\emptyset$, we have
\[
\lim_{\alpha\to\infty}{\lambda}(\alpha)=\min_{x\in\Sigma_{1}}V(x).
\]

\item {\rm(ii)} If $\Sigma_{1}=\emptyset$, we have
\[
\lim_{\alpha\to\infty}{\lambda}(\alpha)=\infty.
\]
\end{theorem}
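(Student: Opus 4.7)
My plan is to pass to a symmetric Schr\"odinger form via the substitution $\varphi=e^{-\alpha m/D}\psi$, which transforms \eqref{eq:1.1-d} into
\begin{equation*}
-D\Delta\psi+W_\alpha(x)\psi=\lambda\psi\ \text{ in }\Omega,\qquad \psi=0\ \text{ on }\partial\Omega,
\end{equation*}
where $W_\alpha(x):=(\alpha^2/D)|\nabla m(x)|^2+\alpha\Delta m(x)+V(x)$. Together with the Rayleigh characterization
\begin{equation*}
\lambda_1(\alpha)=\inf_{0\neq\psi\in H^1_0(\Omega)}\frac{\int_\Omega(D|\nabla\psi|^2+W_\alpha\psi^2)}{\int_\Omega\psi^2},
\end{equation*}
this reduces Theorem~\ref{Dirichlet} to a semiclassical analysis of $-D\Delta+W_\alpha$, whose potential blows up as $\alpha\to\infty$ away from the critical set of $m$.

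For the upper bound in (i), fix $x_0\in\Sigma_1$ realising $\min_{\Sigma_1}V$. Non-degeneracy of the local maximum gives $\nabla m(x_0)=0$ and, in suitable local coordinates, $D^2m(x_0)=-\mathrm{diag}(\mu_1,\dots,\mu_N)$ with all $\mu_i>0$. Insert the Gaussian quasimode
\begin{equation*}
\psi_\alpha(x)=\chi(x-x_0)\prod_{i=1}^N\exp\!\Bigl(-\tfrac{\alpha\mu_i}{2D}(x-x_0)_i^2\Bigr),
\end{equation*}
with $\chi$ a smooth cutoff supported in $B(0,r)$ and $B(x_0,r)\subset\Omega$. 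A Laplace-method computation shows that the contributions of $D|\nabla\psi_\alpha|^2$ and of $(\alpha^2/D)|\nabla m|^2\psi_\alpha^2$ each equal $(\alpha/2)\sum_i\mu_i\int\psi_\alpha^2+o\bigl(\alpha\int\psi_\alpha^2\bigr)$; their sum $\alpha\sum_i\mu_i$ is cancelled exactly by $\alpha\Delta m(x_0)=-\alpha\sum_i\mu_i$, so the Rayleigh quotient tends to $V(x_0)$, yielding $\limsup_{\alpha\to\infty}\lambda_1(\alpha)\le\min_{\Sigma_1}V$.

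For the matching lower bound in (i) and for assertion (ii), I would use an IMS-type partition $\{\chi_j^2\}$ on $\overline\Omega$ with $\sum_j\chi_j^2\equiv1$, arranged so that each $\mathrm{supp}\,\chi_j$ contains at most one critical point of $m$ and one ``far'' cell stays bounded away from the whole critical set. The IMS identity
\begin{equation*}
\int_\Omega(D|\nabla\psi|^2+W_\alpha\psi^2)=\sum_j\int_\Omega\bigl(D|\nabla(\chi_j\psi)|^2+(W_\alpha-D|\nabla\chi_j|^2)(\chi_j\psi)^2\bigr)
\end{equation*}
reduces matters to a local lower bound on each cell. On the far cell, $|\nabla m|$ is bounded below, so $W_\alpha\to+\infty$. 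On a cell around $x_0\in\Sigma_1$, the quadratic Taylor expansion together with a harmonic-oscillator comparison gives the local bound $V(x_0)-o(1)$, matching the upper bound. On a cell around any other interior critical point $\tilde x_0$, the same computation produces a local energy at least $\alpha\sum_i|\nu_i|+\alpha\,\mathrm{tr}\,D^2m(\tilde x_0)+V(\tilde x_0)=2\alpha\sum_{\nu_i>0}\nu_i+V(\tilde x_0)\to+\infty$, where $\nu_1,\dots,\nu_N$ are the eigenvalues of $D^2m(\tilde x_0)$. On a cell around a boundary critical point, either the normal component of $\nabla m$ is non-zero and $W_\alpha\to+\infty$ on the whole cell, or $\nabla m(x_0)=0$ and the Dirichlet condition $\psi|_{\partial\Omega}=0$ imposes a Poincar\'e--Hardy penalty on the corresponding half-ball that sends the local Rayleigh quotient to $+\infty$. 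Taking the minimum over cells gives $\liminf_{\alpha\to\infty}\lambda_1(\alpha)\ge\min_{\Sigma_1}V$ in (i), and $\lambda_1(\alpha)\to+\infty$ in (ii).

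The main obstacle is the harmonic-oscillator comparison step: the partition cells must shrink at the natural semiclassical scale (diameter $\sim\alpha^{-1/2+\varepsilon}$) so that the quadratic Taylor approximations of $|\nabla m|^2$ and $\Delta m$ remain accurate, while at the same time $D|\nabla\chi_j|^2$ stays negligible compared with the dominant $(\alpha^2/D)|\nabla m|^2$ term. The one genuinely new ingredient relative to the interior Neumann analysis of Chen and Lou \cite{CL2008} is the boundary-critical-point case: the Dirichlet constraint $\psi|_{\partial\Omega}=0$ eliminates boundary maxima from the limit, which is precisely why the formula in Theorem~\ref{Dirichlet} selects only $\Sigma_1$, in contrast with the Robin counterpart Theorem~\ref{nonnegative} where boundary maxima compatible with (A1) are retained.
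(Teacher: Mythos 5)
Your route is genuinely different from the paper's: you propose a direct semiclassical analysis of $-D\Delta+W_\alpha$ with Gaussian quasimodes and an IMS partition, whereas the paper proves Theorem~\ref{Dirichlet} in a few lines by comparison with a Robin problem. Specifically, the paper sets $\beta\equiv 1$ in \eqref{eq:1.1}, notes the domain/boundary-condition monotonicity $\tilde\lambda_1(\alpha)\ge\lambda_1(\alpha)$ between the Dirichlet and Robin principal eigenvalues, observes that $\Sigma_2=\emptyset$ since $\beta>0$, and then invokes Theorem~\ref{nonnegative} (already proven) to get the lower bound and the divergence in case (ii); the matching upper bound is obtained by plugging into the Dirichlet Rayleigh quotient the compactly supported plateau functions $\varphi_j$ from Lemma~\ref{upperbound} centered at interior maxima. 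This reduction reuses the concentration analysis done once for the Robin problem, so no boundary analysis has to be redone for Dirichlet.

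There is, however, a genuine gap in your quasimode computation for the upper bound. With $\psi_\alpha=\chi\exp\bigl(-\frac{\alpha}{2D}\sum_i\mu_i y_i^2\bigr)$, the leading $\alpha\sum_i\mu_i$ contributions do cancel as you say, but the remainder is not $o(1)$: it is $O(1)$ and in general strictly positive. The problem is that the Gaussian is only the exponential of the \emph{quadratic} part of $\alpha m$, while the potential involves the full $|\nabla m|^2$; the mismatch $\alpha^2\bigl(|\nabla m|^2-|\nabla m_2|^2\bigr)$ produces a quartic-in-$y$ term whose Gaussian expectation scales like $\alpha^2\cdot\alpha^{-2}=O(1)$. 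Concretely, in one dimension with $m=-\tfrac12\mu x^2+cx^3$ the Rayleigh quotient of your quasimode converges to $V(0)+\tfrac{27c^2}{4\mu^2}$, not $V(0)$, so this does not prove $\limsup_\alpha\lambda_1(\alpha)\le\min_{\Sigma_1}V$. The fix is to take $\psi=\chi e^{\alpha m}$ (equivalently, use $\varphi=\chi$ in the $e^{2\alpha m}$-weighted Rayleigh quotient \eqref{eq:3}): then the cross and potential terms cancel exactly, leaving $\frac{\int(|\nabla\chi|^2+V\chi^2)e^{2\alpha m}}{\int\chi^2 e^{2\alpha m}}\to V(x_0)$ by Laplace asymptotics, which is precisely Lemma~\ref{upperbound}.

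A second, milder, concern is the IMS lower bound. If you shrink cells at scale $\ell\sim\alpha^{-1/2+\varepsilon}$, the localization error $\sum_j|\nabla\chi_j|^2\sim\alpha^{1-2\varepsilon}$ is indeed negligible against the $\alpha^2|\nabla m|^2$ term in far cells, but on the cell containing $x_0\in\Sigma_1$ the local eigenvalue is only $\approx V(x_0)=O(1)$ and the localization error swamps it. You either need to weight the error by the eigenfunction mass on $\mathrm{supp}\,\nabla\chi_j$ (which requires knowing concentration, i.e.\ the conclusion you are trying to prove), or replace the IMS step by the measure-concentration argument used in the paper: the exact factorization $\int|\nabla w|^2+(\alpha^2|\nabla m|^2+\alpha\Delta m)w^2=\int|\nabla w-\alpha w\nabla m|^2\ge0$ gives $\lambda_1(\alpha)\ge\int Vw_\alpha^2$, and one then shows $\mathrm{supp}\,\mu\subset\Sigma_1$ by ruling out mass on $\Omega_1,\dots,\Omega_6$ as in \cite{CL2008} and eliminating boundary maxima by the argument in the proof of Theorem~\ref{nonnegative}(i). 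Your overall intuition (the Dirichlet constraint ejects boundary maxima, leaving only $\Sigma_1$) is correct and matches the theorem, but the quantitative steps as written do not close; the paper's reduction to Robin with $\beta\equiv1$ sidesteps both difficulties.
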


\begin{remark}\label{re-01}{\it Regarding Theorem \ref{Dirichlet}, we want to make some comments as follows.
 \vskip4pt
\item[\rm (i)] Under the assumption that all critical points of $m$ are
  non-degenerate, Theorem \ref{Dirichlet} shows that $\lim_{\alpha\to\infty}{\lambda}(\alpha)=\infty$ if and only if $m$ has no local maximum in the interior of $\Omega$; if $m$ has at least one local maximum in the interior of $\Omega$, then $\lim_{\alpha\to\infty}{\lambda}(\alpha)$ will be determined by Theorem \ref{Dirichlet}(i). Indeed, such assertions remain true as long as $\beta>0$ on $\partial\Omega$ by Theorem \ref{nonnegative}.
 \vskip4pt
\item[\rm (ii)] In dimension $N=2$, if $v$ is a $C^1(\overline\Omega)$ satisfying \eqref{div-f} and $v\cdot n=0$ on $\partial\Omega$, then it is not hard to see that $v$ has a first integral, and therefore $\lim_{\alpha\to\infty}{\lambda}(\alpha)$ is finite and is determined by Theorem \ref{theorem-ND}(ii).
    Nevertheless, if $v\in C^1(\overline\Omega)$ is a gradient field, we take $v=\nabla m$ in \eqref{eq:1.1-d}
  so that \eqref{div-f} becomes $\Delta m=0$ on $\overline\Omega$, and the above-mentioned condition $v\cdot n=0$ on $\partial\Omega$ becomes $\frac{\partial m}{\partial n}=0$ on $\partial\Omega$. In such case, $m$ must be a constant so that $v=0$.

  In dimension $N=1$, that is, $\Omega$ is an open interval, say $\Omega=(0,1)$, then $\Delta m=0$ on $\overline\Omega$ implies $m(x)=ax+b,\, \forall x\in[0,1]$ for some constants $a,\,b$, and so $v(x)=a$ has a first integral in $H_0^1(\Omega)$ if and only if $a=0$. It turns out that this result is a very special case of problem \eqref{p} to be treated in Section 4 below. As mentioned before, for any dimension $N\geq1$, Theorem \ref{theorem-ND}(i) and (ii) can allow $v$ to be degenerate so that it has a first integral.

  The discussion above shows that Theorem \ref{theorem-ND}(ii) does not cover Theorem \ref{Dirichlet}, and vice versa.
  }
\end{remark}

If we allow $\beta$ to change its sign or be negative, generally one
can not expect $\lambda_1(\alpha)$ to be bounded from below with respect to all large $\alpha$.
In order to deal with such a case,  let us denote
$$
\partial\Omega^{-}=\{x\in\partial\Omega:\ \ \beta(x)<0\},\ \ \
\Omega_{\delta}^{-}=\{x\in\Omega:\ \ \text{dist}(x,\partial\Omega^{-})<\delta\}.
$$
We further formulate the following assumption.

\vskip4pt
\noindent\textbf{(A2)}:\ \ There exist a smooth vector field $\nu(x)$
 and a small constant $\delta>0$ such that $\nu(x)\cdot n(x)\geq \delta$
 and $\nabla m(x)\cdot\nu(x)\leq 0$ for all $x\in\Omega_{\delta}^{-}$.
\vskip4pt

For the general boundary function $\beta$, our result can be stated as follows.

\begin{theorem}\label{general}
  Assume that {\rm(A1)} holds and  all critical points of $m$ are
  non-degenerate. Let $\lambda_1(\alpha)$ be the principal eigenvalue of \eqref{eq:1.1}. Then the following assertions hold.

\item {\rm(i)} If $\Sigma_{3}\neq \emptyset$, we have
  \[
  \lim_{\alpha\to\infty} \lambda_1(\alpha)=-\infty.
\]

\item {\rm(ii)} If $\Sigma_{3}=\emptyset$, $\Sigma_{1}\cup\Sigma_{2}\neq
\emptyset$ and {\rm(A2)} holds, we have
\[
  \lim_{\alpha\to\infty} \lambda_1(\alpha)=\min_{\Sigma_{1}\cup\Sigma_{2}}V(x).
\]

\item {\rm(iii)} If $\Sigma_{1}\cup\Sigma_{2}\cup\Sigma_{3}=\emptyset$ and {\rm(A2)} holds, we have
\[
  \lim_{\alpha\to\infty} \lambda_1(\alpha)=\infty.
\]
\end{theorem}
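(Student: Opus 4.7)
My approach goes through the weighted variational characterization
\begin{equation*}
\lambda_1(\alpha)=\inf_{\varphi\in H^1(\Omega)\setminus\{0\}}\frac{D\int_\Omega|\nabla\varphi|^2 e^{2\alpha m/D}dx+\int_\Omega V\varphi^2 e^{2\alpha m/D}dx+D\int_{\partial\Omega}\beta\varphi^2 e^{2\alpha m/D}d\sigma}{\int_\Omega\varphi^2 e^{2\alpha m/D}dx},
\end{equation*}
obtained by rewriting the advection operator in divergence form with weight $e^{2\alpha m/D}$. The workhorse estimate, used throughout, is a Laplace-type analysis of this weight near any non-degenerate $x_0\in\Sigma$: for a smooth bump localized at $x_0$, the volume integral of $e^{2\alpha m/D}$ concentrates like a Gaussian of scale $(\alpha/D)^{-1/2}$ (with an extra normal factor $(\alpha/D)^{-1}$ when $|\nabla m(x_0)|\neq 0$ and $x_0\in\partial\Omega$), while the corresponding surface integral lacks one dimension; in particular, the boundary-to-volume ratio diverges like $\sqrt{\alpha}$ or $\alpha$, and gradient terms from cutoffs supported away from $x_0$ are exponentially small compared to $\int e^{2\alpha m/D}$.

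For \textbf{Part (i)}, I would fix $x_0\in\Sigma_3$ and test with a smooth cutoff $\varphi$ equal to $1$ near $x_0$. The gradient contribution is then exponentially subdominant, and the boundary term is dominated by $D\beta(x_0)<0$ times the diverging boundary-to-volume ratio, forcing $\lambda_1(\alpha)\to-\infty$. For the upper bound in \textbf{Part (ii)}, I would choose $x_0\in\Sigma_1\cup\Sigma_2$ realizing $\min_{\Sigma_1\cup\Sigma_2}V$ and use a bump whose boundary support either is empty (interior case) or lies inside the neighborhood where $\beta\equiv 0$ provided by (A1) (boundary case); the Rayleigh quotient then reduces to $\int V\varphi^2 e^{2\alpha m/D}/\int\varphi^2 e^{2\alpha m/D}\to V(x_0)$.

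The main obstacle is the \emph{lower bound} in Parts (ii) and (iii), where hypothesis (A2) is used to tame the negative boundary contribution on $\partial\Omega^-$. Extending $\nu$ smoothly to $\overline\Omega$ and fixing $\chi\in C^\infty_c(\Omega_\delta^-)$ with $\chi\equiv 1$ near $\partial\Omega^-$, I plan to apply the divergence theorem to $\chi\,\varphi^2 e^{2\alpha m/D}\nu$; the condition $\nu\cdot\nabla m\leq 0$ discards a nonpositive bulk term, and a Cauchy--Schwarz on $2\chi\varphi\,\nu\cdot\nabla\varphi$ yields, for every $\varepsilon'>0$,
\begin{equation*}
\delta\int_{\partial\Omega^-}\varphi^2 e^{2\alpha m/D}d\sigma\;\leq\;C(\varepsilon')\int_\Omega\varphi^2 e^{2\alpha m/D}dx+\varepsilon'\int_\Omega|\nabla\varphi|^2 e^{2\alpha m/D}dx.
\end{equation*}
For $\varepsilon'$ small, the negative piece $D\int_{\partial\Omega^-}\beta\varphi^2 e^{2\alpha m/D}$ will be absorbed into a small fraction of the gradient energy plus a bounded multiple of the weighted mass, leaving only the nonnegative part $\int_{\partial\Omega\setminus\partial\Omega^-}\beta\varphi^2 e^{2\alpha m/D}\geq 0$ on the boundary. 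This reduces both cases to the framework of Theorem \ref{nonnegative}: Part (iii) then inherits $\lambda_1(\alpha)\to\infty$ directly, and Part (ii) inherits $\liminf\lambda_1(\alpha)\geq\min_{\Sigma_1\cup\Sigma_2}V$, which combined with the upper bound above closes the argument.

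The anticipated technical difficulty is that $C(\varepsilon')$ is $\alpha$-independent, so a naive absorption only produces a bounded shift rather than the sharp constant. I expect to handle this by localizing via a partition of unity around each $x_0\in\Sigma$ and showing that the weighted mass $\varphi_\alpha^2 e^{2\alpha m/D}$ of the principal eigenfunction concentrates in an arbitrarily small neighborhood of $\Sigma_1\cup\Sigma_2$ (with $\Sigma_3=\emptyset$ and (A2) together precluding concentration on $\partial\Omega^-$), so that the absorption error becomes $o(1)$ as $\alpha\to\infty$; alternatively, a blow-up/contradiction argument along a sequence $\alpha_n\to\infty$ with $\lambda_1(\alpha_n)\to\lambda^*$ and normalization $\int\varphi_{\alpha_n}^2 e^{2\alpha_n m/D}=1$ should extract a limiting measure supported on $\Sigma$ and, via the Laplace asymptotics, pin $\lambda^*$ at some $V(\hat x)$ with $\hat x\in\Sigma_1\cup\Sigma_2$.
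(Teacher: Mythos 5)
Your overall architecture matches the paper's: (i) a Laplace-type test-function bound at $x_0\in\Sigma_3$ for the $-\infty$ case, (ii) a test-function upper bound at a minimizer of $V$ on $\Sigma_1\cup\Sigma_2$ localized away from $\partial\Omega$ or inside the $\beta\equiv 0$ neighborhood from (A1), (iii) a trace inequality built from (A2) to absorb $\int_{\partial\Omega^-}\beta\varphi^2 e^{2\alpha m}$ into $\varepsilon\int|\nabla\varphi|^2 e^{2\alpha m}+C(\varepsilon)\int\varphi^2 e^{2\alpha m}$, (iv) the resulting uniform gradient-energy bound, (v) concentration of the weighted mass $w_\alpha^2=\varphi_\alpha^2 e^{2\alpha m}$ on $\Sigma_1\cup\Sigma_2$, and (vi) localizing the $C(\varepsilon)$ error with a cutoff $\eta$ supported near $\partial\Omega^-$ and disjoint from $\Sigma_1\cup\Sigma_2$, so that the error is $\int\eta\, d\mu_\alpha\to\int\eta\, d\mu=0$. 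You also correctly identified that the naive absorption only gives a bounded shift and that the sharp constant requires exactly this concentration-plus-localization, which is the heart of the paper's argument (equations \eqref{ineq5}--\eqref{ineq-eng}). Part (iii) in the paper is indeed a contradiction argument from $\mathrm{supp}(\mu)=\emptyset$, which is your "blow-up along $\alpha_n$" alternative.

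The one genuinely different technical ingredient is your derivation of the trace inequality. The paper (Lemma \ref{lemma-trace}) integrates $-\frac{d}{dt}[u^2\zeta^2 e^{2\alpha m}]$ along the flow $\dot\eta=-\nu/(\nu\cdot n)$ and then uses $\nu\cdot\nabla m\le 0$ to drop the $\alpha$-dependent term; you apply the divergence theorem to $\chi\varphi^2 e^{2\alpha m}\nu$ and discard the nonpositive $2\alpha\chi\varphi^2 e^{2\alpha m}(\nu\cdot\nabla m)$ term before Cauchy--Schwarz. These are essentially dual formulations of the same computation, and yours is arguably the more classical phrasing; it buys nothing structurally but is perfectly valid.

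Two points where your sketch is thinner than what is actually needed. First, establishing $\mathrm{supp}(\mu)\subset\Sigma_1\cup\Sigma_2$ is not just a consequence of the energy bound: besides the Chen--Lou arguments for the non-maximal critical points $\Omega_1,\dots,\Omega_6$, you must separately rule out concentration at boundary local maxima where $\beta>0$ (the set $\Sigma_4$), which in the paper is the delicate Cases 1 and 2 in the proof of Theorem \ref{nonnegative} (straightening the boundary or following a tangential flow, and contradicting the boundedness of the boundary trace plus the energy). Your phrase "precluding concentration on $\partial\Omega^-$" does not cover $\Sigma_4$, which sits on $\partial\Omega\setminus\partial\Omega^-$; without ruling it out, the $V$-integral lower bound need not equal $\min_{\Sigma_1\cup\Sigma_2}V$. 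Second, "reduces both cases to the framework of Theorem \ref{nonnegative}" is not a literal reduction: after absorption you are left with a modified quantity that still carries the $-C(\varepsilon)\int\eta w_\alpha^2$ term, so you genuinely need the full measure-theoretic argument rather than an appeal to Theorem \ref{nonnegative} as a black box. Both points are filled in by the paper, and your plan points at the right remedies, but they are real gaps at the level of detail you gave.
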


\begin{remark}\label{re0}{\it Notice that $\lambda_1(\alpha)$ must be bounded from below as $\alpha\to\infty$ provided {\rm(A2)} holds; see Lemma \ref{lowerbound} in Section 3.
Moreover, given $x_0\in\Sigma_{3}$, we have $\nabla m(x_0)\cdot \nu(x_0)\geq 0$ for any vector field $\nu(x_0)$ satisfying $\nu(x_0)\cdot n(x_0)>0$, which, together with Theorem \ref{general}(i), implies that {\rm(A2)} is almost a necessary condition to guarantee the boundedness of $\lambda_1(\alpha)$  from below as $\alpha\to\infty$. One may refer to Remark \ref{r-result} in Section 4 for further discussions in dimension $N=1$. On the other hand, $\lambda_1(\alpha)$ must be bounded from above as $\alpha\to\infty$ as long as $\Sigma_{1}\cup\Sigma_{2}\neq \emptyset$; see Lemma \ref{upperbound} below.}
\end{remark}

Let $d(x)=\mathrm{dist}(x,\partial\Omega)$. Then there exists a small constant $d_0>0$ such that $d(x)$ is smooth in $\{x\in\Omega:\ d(x)<d_0\}$. We now extend the definition of $n(x)$ by letting $n(x)=-\nabla d(x)$ for all $x\in \Omega$ satisfying $d(x)\leq d_0$. Thus, for $x\in\partial\Omega$, $n(x)$ is the unit outward normal to $\partial\Omega$.

In all the above stated theorems, the non-degeneracy assumption on all possible critical points of $m$ in $\Omega$ can be relaxed by the following weaker one:

\vskip4pt
\noindent\textbf{(A3)}:\ \ Assume that $m$ has only finitely many critical points. Let $x_{0}\in\overline{\Omega}\setminus\Sigma$ be a critical point of $m$. Assume that the following hold.

\vskip4pt
\noindent\textbf{(i)} If $x_0\in\Omega$, then one of the following is satisfied.

\textbf{(a).} There exist an $N-1$ dimensional  $C^{1}$-surface $\Gamma$ with $x_{0}\in\Gamma$, and positive constants $r_{0}, \delta$ and a smooth unit vector field $\xi(x)$ such that
$\xi(x)\cdot \nu(x)\geq \delta$, and for all $x\in\Gamma\cap B(x_{0},r_{0})$,
  \[ \xi(x)\cdot\nabla m(x)=0\  \text{on}\ \Gamma\cap B(x_{0},r_{0}),\ \
    \xi(x)\cdot\nabla m(x)\geq 0\ \text{ in } D_1,\ \
    \xi(x)\cdot\nabla m(x)\leq 0\ \text{ in }D_2,
  \]
where $\nu(x)$ is the unit normal vector field of $\Gamma$, $D_1$ and $D_2$ are the two connected components of $B(x_0,r_0)\setminus\Gamma$ such that $\nu(x)$ is outward to $D_2$.

\vskip4pt
\textbf{(b).} There exist a smooth unit vector field $\xi(x)$ and a constant $r_{0}>0$ such that $\xi(x)\cdot\nabla m(x)\geq 0$ for all $x\in B(x_{0},r_{0})\cap\Omega$.
\\

\noindent\textbf{(ii)} If $x_0\in\partial\Omega$, then either $x_0$ is non-degenerate or one of the following is satisfied.

\textbf{(a).} There exist a smooth unit vector field $\xi(x)$ and positive constants $r_0$ and $\delta$ such that
\[
\xi(x)\cdot n(x)\geq\delta\ \, \text{ on }\partial\Omega\cap B(x_0,r_0),\ \
\nabla m(x)\cdot \xi(x)\leq 0\ \, \text{ in }\Omega\cap B(x_0,r_0).
\]

\textbf{(b).} There exist a smooth unit vector field $\xi(x)$ and positive constants $r_0$ and $\delta$ such that
\[
\xi(x)\cdot n(x)\leq 0\ \, \text{ on }\partial\Omega\cap B(x_0,r_0),\ \
\nabla m(x)\cdot \xi(x)\leq 0\ \, \text{ in }\Omega\cap B(x_0,r_0).
\]

\textbf{(c).} There exist an $N-1$ dimenstional $C^1$-surface
$\Gamma$, a smooth unit vector field $\xi(x)$ and positive constants
$r_0$ and $\delta$ such that  $x_0\in\Gamma$, $|n(x_0)\cdot \nu(x_0)|\ne
1$, $\xi(x)\cdot \nu(x)>\delta$ on $\Gamma\cap B(x_0,r_0)$, $
\xi(x)\cdot \nabla m(x)=0$ for all $x\in\Gamma\cap B(x_0,r_0)$, and
\[
\xi(x)\cdot n(x)\leq0\ \, \text{ on }\partial\Omega\cap \overline{D}_1,
\ \
\nabla m(x)\cdot \nu(x)\geq 0\ \, \text{ in } D_1,
\]
 and
\[
\xi(x)\cdot n(x)\geq 0 \ \, \text{ on }\partial\Omega\cap \overline{D}_2,\ \
\nabla m(x)\cdot \nu(x)\leq0\ \, \text{ in } D_2,
\]
where $\nu(x)$ is the unit normal vector field of $\Gamma$, $D_1$ and $D_2$ are the two connnected components of $\left(\Omega\cap B(x_0,r_0)\right)\setminus\Gamma$ such that $n(x)$ is outward to $D_2$.

\begin{remark}\label{re1}{\it  We would like to provide some concrete examples so that {\rm(A3)} holds.

\item {\rm(i)} Suppose that $x_{0}\in\Omega\setminus\Sigma$ is a non-degenerate critical
  point of $m$, then it is easy to check that $m$
  satisfies (i)-(a) of {\rm(A3)} at $x_{0}$.

 \vskip4pt
\item {\rm(ii)} Suppose that $x_{0}\in\Omega$ is a critical point of $m$ and
  \[
m(x)=m(x_{0})+c(e\cdot(x-x_{0}))^{3}+o(|x|^{3})
\]
in a small neighborhood of $x_{0}$ with some constant $c\neq 0$ and
$e\in S^{N-1}$, where $S^{N-1}$ is the unit sphere of $ \mathbb{R}^{N}$.
Then $m$ satisfies (i)-(b) of {\rm(A3)} at $x_{0}$.

\vskip4pt
\item {\rm(iii)} Suppose that $x_{0}\in\partial\Omega$ and
  \[
m(x)=m(x_{0})+c_{1}n(x_{0})\cdot (x-x_{0})+c_{2}(\tilde\nu\cdot(x-x_{0}))^{3}+o(|x|^{3})
\]
in a small neighborhood of $x_{0}$ with some constants $c_{1}>0,
c_{2}\neq 0$, where $\tilde\nu$ is a tangent vector of $\Gamma$ at $x_{0}$.
Then $m$ satisfies (ii)-(a) of {\rm(A3)} at $x_{0}$.

\vskip4pt
\item {\rm(iv)} Suppose that $x_{0}\in\partial\Omega$  and
  \[
m(x)=m(x_{0})+c(n(x_{0})\cdot(x-x_{0}))^{3}+o(|x|^{3})
\]
in a small neighborhood of $x_{0}$ with some constant $c<0$.
Then $m$ satisfies (ii)-(b) of {\rm(A3)} at $x_{0}$.

\vskip4pt
\item {\rm(v)} Suppose that $x_{0}\in\partial\Omega$, $\Omega$ is concave
  in a neighborhood of $x_{0}$, and
  \[
m(x)=m(x_{0})+c_{1}n(x_{0})\cdot (x-x_{0})+c_{2}(\tilde\nu\cdot(x-x_{0}))^{4}+o(|x|^{4})
\]
in a small neighborhood of $x_{0}$ with some constants $c_{1}\geq 0,
c_{2}>0$, where $\tilde\nu$ is a tangent vector of $\Gamma$ at $x_{0}$.
Then $m$ satisfies (ii)-(c) of {\rm(A3)} at $x_{0}$.

}
\end{remark}

Theorem \ref{nonnegative} and Theorem
\ref{general} can be improved as follows.

\begin{theorem}\label{degenerate}
  Assume that {\rm(A1)} and {\rm(A3)} hold. Let $\lambda_1(\alpha)$ be the principal eigenvalue of \eqref{eq:1.1}. Then the following assertions hod.

\item {\rm(i)} If $\Sigma_{3}\neq \emptyset$, we have
  \[
  \lim_{\alpha\to\infty} \lambda_1(\alpha)=-\infty.
\]

\item {\rm(ii)} If $\Sigma_{3}=\emptyset$, $\Sigma_{1}\cup\Sigma_{2}\neq
\emptyset$ and {\rm(A2)} holds, we have
\[
  \lim_{\alpha\to\infty} \lambda_1(\alpha)=\min_{\Sigma_{1}\cup\Sigma_{2}}V(x).
\]

\item {\rm(iii)} If $\Sigma_{1}\cup\Sigma_{2}\cup\Sigma_{3}=\emptyset$ and {\rm(A2)} holds, we have
\[
  \lim_{\alpha\to\infty} \lambda_1(\alpha)=\infty.
\]
\end{theorem}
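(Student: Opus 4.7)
The plan is to follow the proofs of Theorems \ref{nonnegative} and \ref{general}, since Theorem \ref{degenerate} differs only in that the global non-degeneracy of the critical points of $m$ is replaced by the local structural hypothesis (A3). The upper-bound arguments — namely the test-function construction giving $\limsup_{\alpha\to\infty}\lambda_1(\alpha)\le\min_{\Sigma_1\cup\Sigma_2}V$ in case (ii), and the analogous construction at a point of $\Sigma_3$ showing $\lambda_1(\alpha)\to-\infty$ in case (i) — use only the existence of a local maximum of $m$ at a specified point, together with the sign of $\beta$ there, and not any assumption on the remaining critical set. Therefore part (i) is complete, and in parts (ii) and (iii) only the matching lower bound requires modification.

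For the lower bound I would work with the self-adjoint weighted form of the operator, which gives the Rayleigh characterization
\begin{equation*}
\lambda_1(\alpha)=\inf_{0\ne\varphi\in H^1(\Omega)}\frac{D\!\int_\Omega e^{2\alpha m/D}|\nabla\varphi|^2+\int_\Omega V e^{2\alpha m/D}\varphi^2+D\!\int_{\partial\Omega}\beta e^{2\alpha m/D}\varphi^2}{\int_\Omega e^{2\alpha m/D}\varphi^2}.
\end{equation*}
Let $\varphi_\alpha>0$ denote the principal eigenfunction normalized by $\int_\Omega e^{2\alpha m/D}\varphi_\alpha^2=1$, and set $d\mu_\alpha=e^{2\alpha m/D}\varphi_\alpha^2\,dx$. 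The strategy is to show that any weak-$*$ subsequential limit of $\{\mu_\alpha\}$ is supported in $\Sigma_1\cup\Sigma_2$: this gives $\liminf_{\alpha\to\infty}\lambda_1(\alpha)\ge\min_{\Sigma_1\cup\Sigma_2}V$ in case (ii) by continuity of $V$ and nonnegativity of the gradient and $\beta$-boundary contributions, while in case (iii) the admissible support set is empty, forcing $\lambda_1(\alpha)\to\infty$. The contribution coming from $\partial\Omega^-$ is handled exactly as in Theorem \ref{general} using (A2) and Lemma \ref{lowerbound}, and (A1) ensures $\beta\equiv 0$ in a neighborhood of every point of $\Sigma_2$ so that no adverse boundary term arises there.

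The main obstacle is excluding concentration of $\mu_\alpha$ at each of the finitely many critical points $x_0\in\overline{\Omega}\setminus\Sigma$ to which the Morse-type argument of Theorem \ref{general} no longer applies. The key tool is the divergence identity
\begin{equation*}
\tfrac{2\alpha}{D}\!\int_U e^{2\alpha m/D}\varphi_\alpha^2(\xi\cdot\nabla m)\,dx=\int_{\partial U}(\xi\cdot n_U)e^{2\alpha m/D}\varphi_\alpha^2\,dS-\int_U e^{2\alpha m/D}\varphi_\alpha^2\,\mathrm{div}\,\xi\,dx-2\!\int_U e^{2\alpha m/D}\varphi_\alpha(\xi\cdot\nabla\varphi_\alpha)\,dx,
\end{equation*}
valid on any smooth subregion $U\subset\overline{\Omega}$ and any smooth vector field $\xi$, whose right-hand side is $O(1)$ by Cauchy--Schwarz and the uniform weighted $H^1$-bound on $\varphi_\alpha$ implied by $\lambda_1(\alpha)=O(1)$. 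Consequently, whenever (A3) furnishes a field $\xi$ with $\xi\cdot\nabla m\ge 0$ on $U=B(x_0,r_0)\cap\Omega$ and with the appropriate sign of $\xi\cdot n$ on $\partial\Omega\cap U$, the identity forces $\mu_\alpha(B(x_0,r_0/2)\cap\Omega)=O(1/\alpha)$. Sub-cases (i)-(b), (ii)-(a), (ii)-(b), and the non-degenerate option in (ii) provide such a $\xi$ directly.

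The hardest sub-cases are (i)-(a) and (ii)-(c), where a $C^1$-surface $\Gamma$ splits a neighborhood of $x_0$ into components $D_1$ and $D_2$ on which $\xi\cdot\nabla m$ has opposite signs. On these I would apply the divergence identity separately on $D_1$ with $\xi$ and on $D_2$ with $-\xi$, so that the volume integral $\int_{D_i}e^{2\alpha m/D}\varphi_\alpha^2(\pm\xi\cdot\nabla m)\,dx$ is nonnegative on each side; the hypothesis $\xi\cdot\nu\ge\delta$ on $\Gamma$ is precisely what makes the two interfacial boundary contributions along $\Gamma$ combine with a coherent sign so that they can be absorbed by a standard trace estimate, while the sign of $\xi\cdot n$ on $\partial\Omega$ prescribed in (ii)-(c) makes the outer-boundary contribution controllable. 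A finite covering of $\overline{\Omega}$ by such neighborhoods, available because $m$ has only finitely many critical points by (A3), yields the desired concentration of $\mu_\alpha$ on $\Sigma_1\cup\Sigma_2$ in case (ii) and its vanishing in case (iii), and the three conclusions then follow as in the proof of Theorem \ref{general}.
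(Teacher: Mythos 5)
The overall scaffolding you propose is correct and matches the paper: part (i) follows from the test-function construction of Lemma \ref{unbounded} and uses nothing about the remaining critical set; the upper bound in (ii) comes from Lemma \ref{upperbound}; and the only new ingredient needed relative to Theorems \ref{nonnegative} and \ref{general} is a way to exclude mass concentration of the limit measure $\mu$ at the (possibly degenerate) critical points covered by (A3). The paper isolates exactly this in Lemma \ref{lemma-degenerate}. Where your argument departs from the paper is precisely in this lemma, and unfortunately there is a genuine gap.

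The divergence identity you write,
\[
2\alpha\int_U w_\alpha^2\,\xi\cdot\nabla m
=\int_{\partial U}w_\alpha^2\,\xi\cdot n_U
-\int_U w_\alpha^2\,\mathrm{div}\,\xi
-2\int_U w_\alpha\,\xi\cdot\bigl(\nabla w_\alpha-\alpha w_\alpha\nabla m\bigr)
\]
(in the variables $w_\alpha=e^{\alpha m}\varphi_\alpha$ with $D=1$), is valid and its right-hand side is indeed $O(1)$ when the sign conditions in (A3) are used to dispose of the boundary term and the weighted $H^1$ bound controls the last term. This yields $\alpha\int_U w_\alpha^2\,\xi\cdot\nabla m=O(1)$, hence $\int_U w_\alpha^2\,\xi\cdot\nabla m=O(1/\alpha)$. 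However, the inference that ``the identity forces $\mu_\alpha(B(x_0,r_0/2)\cap\Omega)=O(1/\alpha)$'' does not follow: since $x_0$ is a critical point, $\nabla m(x_0)=0$, so $\xi\cdot\nabla m$ vanishes continuously at $x_0$, and a vanishing weighted integral $\int w_\alpha^2\,\xi\cdot\nabla m$ places no constraint on $\int_{B(x_0,\rho)} w_\alpha^2$ when the mass sits in the region where $\xi\cdot\nabla m$ is small. The same defect appears in your treatment of sub-cases (i)-(a) and (ii)-(c): splitting $U$ into $D_1$ and $D_2$ and using $\pm\xi$ does give a favorable sign for the interfacial term along $\Gamma$, so you can additionally conclude $\int_{\Gamma\cap U}w_\alpha^2=O(1)$ and $\alpha\int_{D_i}w_\alpha^2\,|\xi\cdot\nabla m|=O(1)$, but neither of these rules out $\mu(\{x_0\})>0$.

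The paper's Lemma \ref{lemma-degenerate} supplies the missing quantitative input by a slice-comparison argument along the integral curves of $\xi$: if $\mu(\{x_0\})>0$, one finds a level slice $\Gamma_{s_k}$ at distance $O(\epsilon r_1)$ from $x_0$ carrying surface mass $\gtrsim\mu(\{x_0\})/(\epsilon r_1)$, and a slice $\Gamma_{t_k}$ at distance $\sim r_1$ (inside the non-critical set $\Omega_1$, so $\mu$-negligible) carrying mass $\lesssim\mu(\{x_0\})/r_1$. The discrepancy between these two slice masses is then bounded \emph{independently of $\epsilon$} by transporting $w_\alpha^2$ along $\eta(\cdot,t)$, using $\xi\cdot\nabla m\ge 0$ to control $(\xi\cdot\nabla w_\alpha)^-$ by $|\nabla w_\alpha-\alpha w_\alpha\nabla m|$. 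Sending $\epsilon\to0$ then produces the contradiction. The key point is that the bound compares \emph{two surface slices}, one of which blows up as $\epsilon\to0$; a single-region divergence identity cannot reproduce this because it integrates against the vanishing weight $\xi\cdot\nabla m$. To fix your proof you would need to replace the step following the divergence identity with a comparison of this kind (or localize the identity to a thin shell between the two slices and extract the surface terms, which is essentially what the paper's co-area/push-forward argument does).

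There is also a smaller omission worth flagging: for the non-degenerate boundary critical points listed in (A3)(ii), you appeal to the argument ``directly,'' but these are handled in the paper by the estimates from \cite[Lemmas~3.1--3.4, 4.1]{CL2008} applied to $\Omega_3,\dots,\Omega_6$, not by the divergence identity; you should be explicit that you are importing those, together with the $\Sigma_4$ exclusion from the proof of Theorem \ref{nonnegative}, before concluding $\mathrm{supp}(\mu)\subset\Sigma_1\cup\Sigma_2$ in case (ii) (resp.\ $\mathrm{supp}(\mu)=\emptyset$ in case (iii)).
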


We want to make the following remarks.
\begin{remark}\label{re-11}{\it

\item{\rm(i)} Notice that in the Neumann boundary problem \eqref{eq:N}, our assumptions {\rm(A1)} and {\rm(A2)} hold automatically. In addition, we do not require the non-degeneracy of local maxima of $m$. Thus, under the weaker condition {\rm(A3)}, the assertion (ii) of Theorem \ref{degenerate} improves Theorem \ref{theorem-N} obtained by Chen and Lou in \cite{CL2008}.

 \vskip4pt
\item{\rm(ii)} Both Theorem \ref{general}(i) and Theorem \ref{degenerate}(i) remain true only if the set $\Sigma_{3}$ contains one local maximum in the sense of Definition \ref{def-1}(iii) and all the critical points of $m$ allow to be degenerate; see Remark \ref{re2} below.

   \vskip4pt
\item{\rm(iii)} Theorem \ref{Dirichlet} remains valid provided the non-degeneracy assumption of all critical points of $m$ is replaced by {\rm(A3)}.
    }

\end{remark}

\begin{remark}\label{re-p}{\it For any dimension $N\ge1$, let $L_i\,(1\leq i\leq N)$ be given positive numbers. Assume that $m\in C^2(\mathbb{R}^N)$ and $V\in C(\mathbb{R}^N)$ are $L$-periodic with respect to the spatial variable $x$ in the sense of \cite{BH2002,BHN2005}; that is, $w(x_1+L_1,x_2+L_2,\cdots,x_n+L_N)=w(x_1,x_2,\cdots,x_N)$ for all $x=(x_1,x_2,\cdots,x_N)\in\mathbb{R}^N$ and $w\in\{m,\, V\}$. Consider the following spatial-periodic eigenvalue problem:
\begin{equation}\label{eq:periodic}
\left\{
\begin{aligned}
  &-\Delta \varphi -2\alpha\nabla m(x)\cdot \nabla\varphi+V(x)\varphi=\lambda\varphi &&\hbox{ in }\mathbb{R}^N,\\
  &\varphi \ \mbox{is $L$-periodic in}\ x.
\end{aligned}
\right.
\end{equation}

Let $\Sigma$ be the local maxima set of $m$ in the sense of Definition \ref{def-1}(iii) and denote by $\lambda_1(\alpha)$ the unique principal eigenvalue of \eqref{eq:periodic}. Assume that $m$ has only finitely many critical points on $\{x=(x_1,x_2,\cdots,x_N)\in\mathbb{R}^N:\ x_i\in(0,2L_i),\ 1\leq i\leq N\}$. For any critical point $x_{0}$ of $m$ with $x_{0}\in\mathbb{R}^N\setminus\Sigma$, we further assume that
either \textbf{(A3)}(i)-(a) or (i)-(b) holds. Then, by the similar analysis to prove Theorem \ref{degenerate}, we can conclude that
\[
  \lim_{\alpha\to\infty} \lambda_1(\alpha)=\min_{\Sigma}V(x).
\]

This result is not covered by \cite{BHN2005} and gives an explicit limit, compared to \cite[Theorem 2.2]{BHN2005}.
}
\end{remark}

The primary results of this paper reveal how boundary conditions significantly impact the asymptotic behavior of the principal eigenvalue in the context of large advection rates.

Regarding the proofs of our main results, though our approach is inspired by \cite{CL2008}, we encounter several significant challenges that require us to develop some new techniques to overcome. One of such challenges is to determine whether the following energy functional
\begin{equation}\label{energy}
\int_{\Omega}\{|\nabla w-\alpha w\nabla m|^{2}\}+\int_{\partial\Omega}\beta w^{2}
\end{equation}
is bounded or not for all large $\alpha$, where $w$ is the principal eigenfunction corresponding to $\lambda_1(\alpha)$ with $\int_\Omega w^2=1$  for all $\alpha\geq0$. When $\beta=0$, it is clear to see that \eqref{energy} is bounded from both above and below. This is one of the key points in \cite{CL2008} to derive Theorem \ref{theorem-N}. Nevertheless, such a property does not hold for a general boundary function $\beta$.

In the case of $\beta\geq0$, obviously \eqref{energy} is bounded from below. Furthermore, as long as $\Sigma_{1}\cup\Sigma_{2}\neq \emptyset$ holds, \eqref{energy} is also bounded from above (see Lemma \ref{upperbound}). In fact, thanks to Lemma \ref{upperbound}, in this case, in order to prove Theorem \ref{nonnegative}(i), we just need to determine $\liminf_{\alpha\to\infty}\lambda_1(\alpha)$. This will be achieved by analyzing the support of the weak limit $\mu$ of $w$, which is a probability measure. The primary task is to show the support of $\mu$ satisfies $\text{supp}(\mu)\subset\Sigma_{1}\cup\Sigma_{2}$, which involves some nontrivial analysis. If $\Sigma_{1}\cup\Sigma_{2}=\emptyset$, we shall show that \eqref{energy} must be unbounded from above and in turn $\lim_{\alpha\to\infty}\lambda_1(\alpha)=\infty$ by resorting to
a contradiction argument so as to lead to $\text{supp}(\mu)=\emptyset$.

The case that $\beta$ may be negative somewhere on $\partial\Omega$ turns out to be more challenging to handle since \eqref{energy} may be unbounded from either below or above. One of the crucial ingredients in our argument is establishing a trace-type inequality (see Lemma \ref{lemma-trace}). This inequality allows us to control the boundary integral term in \eqref{energy} by the first integral term of \eqref{energy}.  As a result, under proper assumptions on the advection function $m$, we can determine whether \eqref{energy} is bounded or unbounded; based on such information, $\text{supp}(\mu)$ can be therefore clarified.

All of the aforementioned results apply to any dimension ($N\geq1$). For the one-dimensional case ($N=1$), we are able to give a complete description of the limiting behavior of the principal eigenvalue of the associated eigenvalue problem (i.e., \eqref{p} in Section 4), by allowing the advection function $m$ to have natural kinds of degeneracy. Our results exhaust all parameter ranges of the boundary conditions and thus complement those in \cite{PZ2018}. See Section 4 for precise details. We want to stress that Lemma \ref{lemma-trace}, which provides a trace inequality, is also crucial in yielding the results of the one-dimensional problem \eqref{p}. Additionally, we note that the principal eigenvalue of the periodic eigenvalue problem \eqref{eq:periodic} with $N=1$ and the degenerate function $m$ was treated in \cite{PZ2018}; see Theorem 2.13 there.

The rest of our paper is organized as follows. In Section 3, we will give the proofs of Theorem \ref{nonnegative}-\ref{degenerate}. In Section 4, we will consider the one-dimensional problem and obtain a complete characterization of the asymptotic behavior of the principal eigenvalue, and some typical examples will also be used to demonstrate the impact of boundary conditions and the degeneracy of $m$.

\section{Proofs of Theorems \ref{nonnegative}-\ref{degenerate}} This
section is devoted to the proofs of our main results: Theorems
\ref{nonnegative}-\ref{degenerate}. Without loss of generality, we
always take $D=1$ from now on. Throughout this paper, we use $S^{k-1}$
to denote the unit sphere of $ \mathbb{R}^{k}$, $\mathcal{H}^{k}$ to
denote the $k$ dimensional Hausdorff measure, and $f^+$ and $f^-$, respectively, to denote the positive part and negative part of a function $f$.

\subsection{Two lemmas}
For later purpose, we need to make the following transformation:
$$
w(x)=e^{\alpha m(x)}\varphi(x)
$$
and also normalize $w$ with $\int_{\Omega}w^2dx=1$. Then, in view of \eqref{eq:1.1} satisfied by $\varphi$, direct computation shows that $w$ solves
\begin{equation}
  \label{eq:3.3}
  \left\{
\begin{aligned}
  &-\Delta w+\left(\alpha^2|\nabla m|^2+\alpha\Delta m+V-\lambda\right)w=0 &&\hbox{ in }\Omega,\\
  &\frac{\partial w}{\partial n}-\alpha w\frac{\partial
    m}{\partial n}+\beta w=0\ && \hbox{ on
  }\partial\Omega,\\
  &\int_{\Omega}w^2=1.
\end{aligned}
\right.
\end{equation}

It is also well known that the principal eigenvalue $\lambda_1(\alpha)$ can be variationally characterized by
\begin{equation}\label{eq:1.2}
  \begin{split}
    \lambda_1(\alpha)&=\inf_{\varphi\in
                     H^{1}(\Omega),\,\varphi\not=0}\frac{\int_{\Omega}e^{2\alpha
                     m}(|\nabla\varphi|^{2}+V\varphi^{2})dx+\int_{\partial\Omega}\beta
                     e^{2\alpha
                     m}\varphi^{2}d\mathcal{H}^{N-1}}{\int_{\Omega}e^{2\alpha
                     m}\varphi^{2}dx}\\
&=\inf_{w\in
                   H^{1}(\Omega),\,\int_{\Omega}w^{2}dx=1}\int_{\Omega}|\nabla
                   w-\alpha w\nabla
                   m|^{2}+Vw^{2}dx+\int_{\partial\Omega}\beta w^{2}d\mathcal{H}^{N-1}.
\end{split}
\end{equation}
One may refer to \cite{CL2008,CL2012,PZZ2019}.

We first establish an estimate for the upper bound of $\lambda_1(\alpha)$ once $\Sigma_{1}\cup\Sigma_{2}\neq\emptyset$.

\begin{lemma}\label{upperbound}
If $\Sigma_{1}\cup\Sigma_{2}\neq\emptyset$, then it holds that
\[
  \limsup_{\alpha\to\infty}\lambda_1(\alpha)\leq\min_{x\in\Sigma_{1}\cup\Sigma_{2}}V(x).
\]
\end{lemma}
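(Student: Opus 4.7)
The plan is to build an explicit test function in the variational characterization \eqref{eq:1.2} that concentrates at a point $x_0 \in \Sigma_1\cup\Sigma_2$ attaining the minimum of $V$, and show that the resulting Rayleigh quotient converges to $V(x_0)$ as $\alpha\to\infty$. Since $\Sigma_1\cup\Sigma_2$ is nonempty, fix such an $x_0$ realizing $V(x_0)=\min_{\Sigma_1\cup\Sigma_2}V$. I will work with the first (pre-transformation) Rayleigh quotient
\[
\lambda_1(\alpha)\leq \frac{\int_\Omega e^{2\alpha m}\left(|\nabla\varphi|^2+V\varphi^2\right)+\int_{\partial\Omega}\beta e^{2\alpha m}\varphi^2}{\int_\Omega e^{2\alpha m}\varphi^2}
\]
because the exponential weight $e^{2\alpha m}$ will do the concentration for us without requiring any $\alpha$-dependent ansatz.

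Using Definition~\ref{def-1}(iii), pick a sequence $r_j\downarrow 0$ with $M_j:=\max_{\overline{\Omega}\cap \partial B(x_0,r_j)}m < m(x_0)$; by continuity of $m$ we in fact have $m\leq M_j$ on $\overline{\Omega}\setminus B(x_0,r_j)$ after shrinking $r_j$ further if needed. Choose a cutoff $\varphi_j\in C^\infty(\overline{\Omega})$ with $0\leq\varphi_j\leq 1$, $\varphi_j\equiv 1$ on $B(x_0,r_j/2)$, and $\operatorname{supp}\varphi_j\subset \overline{B(x_0,r_j)}\cap\overline{\Omega}$; if $x_0\in\Sigma_1$ we take $\varphi_j$ compactly supported in $\Omega$. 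In the case $x_0\in\Sigma_2$, assumption \textbf{(A1)} lets me shrink $r_j$ further so that $\beta\equiv 0$ on $\partial\Omega\cap B(x_0,r_j)$, which makes the boundary integral in the numerator vanish identically. Thus in either case
\[
\lambda_1(\alpha)\leq \frac{\int_\Omega e^{2\alpha m}|\nabla\varphi_j|^2}{\int_\Omega e^{2\alpha m}\varphi_j^2}+\frac{\int_\Omega e^{2\alpha m}V\varphi_j^2}{\int_\Omega e^{2\alpha m}\varphi_j^2}.
\]

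For the gradient term, $|\nabla\varphi_j|$ is supported in $\overline{\Omega}\cap(B(x_0,r_j)\setminus B(x_0,r_j/2))$ where $m\leq M_j$, so the numerator is at most $C_je^{2\alpha M_j}$. For the denominator, continuity of $m$ at $x_0$ gives, for any $\varepsilon>0$, a ball $B(x_0,\rho_\varepsilon)\subset B(x_0,r_j/2)\cap\overline{\Omega}$ on which $m\geq m(x_0)-\varepsilon$, hence $\int_\Omega e^{2\alpha m}\varphi_j^2\geq c_\varepsilon e^{2\alpha(m(x_0)-\varepsilon)}$. Choosing $\varepsilon<m(x_0)-M_j$ makes the ratio decay like $e^{-2\alpha(m(x_0)-M_j-\varepsilon)}\to 0$. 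For the potential term, bound it crudely by $\max_{\overline{B(x_0,r_j)}\cap\overline{\Omega}}V$. Taking $\limsup_{\alpha\to\infty}$ at fixed $j$ yields $\limsup_{\alpha\to\infty}\lambda_1(\alpha)\leq \max_{\overline{B(x_0,r_j)}\cap\overline{\Omega}}V$; letting $j\to\infty$ and using continuity of $V$ gives $\limsup_{\alpha\to\infty}\lambda_1(\alpha)\leq V(x_0)=\min_{\Sigma_1\cup\Sigma_2}V$.

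The only genuinely delicate point is the boundary case $x_0\in\Sigma_2$: without \textbf{(A1)} one would have no control on $\int_{\partial\Omega}\beta e^{2\alpha m}\varphi_j^2$ because $\beta$ could take either sign near $x_0$. The hypothesis \textbf{(A1)} is exactly what turns this integral into $0$ for sufficiently small $r_j$, so the argument reduces cleanly to the interior case. Everything else is a direct Laplace-type comparison, which is why the construction works uniformly for all $x_0\in\Sigma_1\cup\Sigma_2$.
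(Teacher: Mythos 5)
Your overall strategy---concentrating a test function at $x_0\in\Sigma_1\cup\Sigma_2$, using \textbf{(A1)} to kill the boundary term, and running a Laplace-type comparison on the Rayleigh quotient---is exactly the paper's. But your gradient-term estimate has a genuine gap, and it is the one the paper's more careful three-radius construction is specifically designed to close.

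You define $M_j := \max_{\overline{\Omega}\cap\partial B(x_0,r_j)}m$ and then assert that $m\leq M_j$ on the support of $\nabla\varphi_j$, i.e.\ on the annulus $\overline{\Omega}\cap(B(x_0,r_j)\setminus B(x_0,r_j/2))$, ``by continuity'' and ``after shrinking $r_j$.'' Neither claim follows from Definition~\ref{def-1}(iii). That definition only controls $m$ on the specific \emph{spheres} $\partial B(x_0,r_j)$; it says nothing about the annuli between radii. In particular, the plateau behavior that the definition deliberately allows can place points $y$ with $r_j/2<|y-x_0|<r_j$ at which $m(y)$ is strictly larger than $M_j$, possibly even $m(y)=m(x_0)$. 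In that case your numerator bound is $C_j e^{2\alpha\,\tilde M_j}$ with $\tilde M_j:=\max_{\overline{\Omega}\cap(\overline{B(x_0,r_j)}\setminus B(x_0,r_j/2))}m$ which may equal $m(x_0)$, and the ratio against the denominator $\gtrsim e^{2\alpha(m(x_0)-\varepsilon)}$ then fails to go to zero as $\alpha\to\infty$. (Your parenthetical ``$m\leq M_j$ on $\overline{\Omega}\setminus B(x_0,r_j)$'' is also simply false, since $x_0$ is only a local maximum, but it is not what the argument actually uses.)

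The paper repairs this by introducing two further radii $s_j<t_j<r_j$. It first picks $s_j$ so small that $m_j:=\min_{\overline{\Omega}\cap\overline{B(x_0,s_j)}}m > \max_{\overline{\Omega}\cap\partial B(x_0,r_j)}m$, then picks $t_j$ close enough to $r_j$ that, by continuity, $M_j:=\max_{\overline{\Omega}\cap(\overline{B(x_0,r_j)}\setminus B(x_0,t_j))}m < m_j$. The cutoff $\varphi_j$ is then constant on $\overline{B(x_0,t_j)}$, so its gradient lives only on the thin annulus between $t_j$ and $r_j$, where the exponent $2\alpha M_j$ is strictly beaten by the exponent $2\alpha m_j$ from the denominator integrated over $B(x_0,s_j)$. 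Your argument can be salvaged by inserting exactly this intermediate radius $t_j$ and the lower-bound radius $s_j$; as written, the decoupling between where the numerator lives and where the denominator is estimated is missing, and the inequality $\tilde M_j<m(x_0)-\varepsilon$ you implicitly need is not available.
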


\begin{proof} Our analysis is similar to that of \cite[Lemma 2.4]{CL2008}. Fix $x_{0}\in\Sigma_{1}\cup\Sigma_{2}$. Then, by our Definition \ref{def-1}(iii), there exists a sequence
  $\{r_{j}\}$ such that $\lim_{j\to\infty}r_{j}=0$ and
\[
  m(x_{0})>\max_{\overline{\Omega}\cap\partial B(x_{0},r_{j})}m(x),\ \ \ \forall j\in\mathbb{N}.
\]

Now, we first choose a sequence $\{s_{j}\}$ such that
\[
  0<s_{j}<r_{j},\ \ \min_{\overline{\Omega}\cap
    \overline{B(x_{0},s_{j})}}m(x):=m_{j}>\max_{\overline{\Omega}\cap\partial
    B(x_{0},r_{j})}m(x),
\]
and then choose another sequence $\{t_{j}\}$ such that
\[
  s_{j}<t_{j}<r_{j},\ \ \min_{\overline{\Omega}\cap
    \overline{B(x_{0},s_{j})}}m(x):=m_{j}>M_{j}:=\max_{\overline{\Omega}\cap
    \overline{B(x_{0},r _{j})}\setminus B(x_{0},t_{j})}m(x).
\]

We define
\begin{equation*}
  \varphi_{j}(x)=
  \left \{
   \begin{aligned}
    & 1&& \text{ if }x\in \overline{B(x_{0},t_{j})},\\
    & (r_{j}-|x|)/(r_{j}-t_{j}) && \text{ if } x\in
                                     \overline{B(x_{0},r_{j})}\setminus
                                     B(x_{0},t_{j}),\\
     &0&& \text{ if }x\in \mathbb{R}^{N}\setminus B(x_{0},r_{j}).
   \end{aligned}
   \right.
 \end{equation*}

In the case that $x_{0}\in\Sigma_{2}$, according to the assumption (A1), there exists $\delta>0$ such that $\beta(x)=0$ for all $x\in\partial\Omega\cap B(x_{0},\delta)$. Thus, in either case of $x_{0}\in\Sigma_{1}$ or $x_{0}\in\Sigma_{2}$, we may assume that, for sufficiently large $j$,
$\beta(x)\varphi_{j}(x)\equiv 0$ on $\partial\Omega$. Direct calculation
yields from \eqref{eq:1.2} that
 \begin{align*}
     \lambda_1(\alpha)&\leq \frac{\int_{\Omega}e^{2\alpha m}|\nabla
                      \varphi_{j}|^{2}dx}{\int_{\Omega}e^{2\alpha
                      m}\varphi_{j}^{2}dx}+\frac{\int_{\Omega}e^{2\alpha
                      m}V\varphi_{j}^{2}dx}{\int_{\Omega}e^{2\alpha
                      m}\varphi_{j}^{2}dx}+\frac{\int_{\partial\Omega}e^{2\alpha
                      m}\beta\varphi_{j}^{2}d\mathcal{H}^{N-1}}{\int_{\Omega}e^{2\alpha
                      m}\varphi_{j}^{2}dx}\\
     &\leq\frac{e^{2\alpha M_{j}}r_{j}^{N}}{|r_{j}-t_{j}|^{2}s_{j}^{N}e^{2\alpha
       m_{j}}}+\max_{\overline{B(x_{0},r_{j})}}V(x)
 \end{align*}
 for sufficiently large $j$. Here we used the first variational characterization of $\lambda_1(\alpha)$ in \eqref{eq:1.2}. Letting $\alpha\to \infty$ and then $j\to\infty$, we obtain that
 \[
   \limsup_{\alpha\to\infty}\lambda_1(\alpha)\leq V(x_{0}),
 \]
which completes the proof.
\end{proof}

We next consider the case of $\Sigma_{3}\neq\emptyset$ and obtain the following result.

\begin{lemma}\label{unbounded}
  If $\Sigma_{3}\neq\emptyset$, then it holds that
  \[
\lim_{\alpha\to\infty}\lambda_1(\alpha)=-\infty.
    \]
  \end{lemma}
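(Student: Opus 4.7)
The plan is to produce a family of test functions whose Rayleigh quotients in \eqref{eq:1.2} tend to $-\infty$; since $\lambda_1(\alpha)$ is bounded above by every such quotient, this yields $\lim_{\alpha\to\infty}\lambda_1(\alpha)=-\infty$. Pick $x_0\in\Sigma_3$. By continuity of $\beta$, there exist $b>0$ and a small ball around $x_0$ on which $\beta\le -b<0$ on $\partial\Omega$.

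I will reuse the tent cutoffs $\varphi_j$ built in the proof of Lemma \ref{upperbound}: supported in $B(x_0,r_j)$, identically $1$ on $\overline{B(x_0,t_j)}$, with radii $s_j<t_j<r_j$ chosen (via Definition \ref{def-1}(iii) and continuity of $m$) so that $M''_j:=\max_{\overline{B(x_0,r_j)}\cap\overline{\Omega}\setminus B(x_0,t_j)}m<m_j:=\min_{\overline{B(x_0,s_j)}\cap\overline{\Omega}}m$ and $r_j\to 0$. Plugging $\varphi_j$ into the first form of \eqref{eq:1.2}, the $|\nabla\varphi_j|^2$-contribution is handled exactly as in Lemma \ref{upperbound}: it is controlled by $C(r_j-t_j)^{-2}(r_j/s_j)^N e^{2\alpha(M''_j-m_j)}$ and so tends to $0$ as $\alpha\to\infty$ for each fixed $j$, while the $V$-contribution is uniformly bounded by $\|V\|_\infty$. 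Everything therefore reduces to the boundary term
$$
\frac{\int_{\partial\Omega}\beta\varphi_j^2e^{2\alpha m}\,d\mathcal{H}^{N-1}}{\int_{\Omega}\varphi_j^2e^{2\alpha m}\,dx}\le -b\cdot\frac{\int_{\partial\Omega\cap B(x_0,s_j)}e^{2\alpha m}\,d\mathcal{H}^{N-1}}{\int_{\Omega}\varphi_j^2e^{2\alpha m}\,dx},
$$
where I used $\varphi_j\equiv 1$ on $\overline{B(x_0,s_j)}$ and $\beta\le-b$ on $\partial\Omega\cap B(x_0,r_j)$.

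The heart of the argument is to show that the right-hand ratio can be made arbitrarily large. Bounding the numerator from below by $c\,s_j^{N-1}e^{2\alpha m_j}$ (using $m\ge m_j$ on $B(x_0,s_j)$ and $\mathcal{H}^{N-1}(\partial\Omega\cap B(x_0,s_j))\ge c\,s_j^{N-1}$ for smooth $\partial\Omega$) and the denominator from above by $C\,r_j^N e^{2\alpha m(x_0)}$, the ratio is at least $(c/C)(s_j^{N-1}/r_j^N)\,e^{-2\alpha(m(x_0)-m_j)}$. To control the exponential factor, I diagonalize: for each $\alpha$, choose $s_j$ so small that the oscillation of $m$ on $\overline{B(x_0,s_j)}$ is at most $\alpha^{-2}$ (possible by continuity of $m$ at $x_0$), which makes $e^{-2\alpha(m(x_0)-m_j)}\to 1$, and pick $r_j$ from the sequence supplied by Definition \ref{def-1}(iii) comparable to $s_j$; then $s_j^{N-1}/r_j^N$ is of order $1/r_j\to\infty$, and this negative boundary contribution overwhelms the two vanishing terms. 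Letting $K>0$ be arbitrary and using this construction with $j=j(\alpha)$ so chosen gives $\lambda_1(\alpha)\le -K$ for all sufficiently large $\alpha$, proving the lemma.

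The main obstacle is to reconcile the two competing size constraints on $(s_j,r_j)$: the strict separation $m_j>M''_j$ needed to kill the gradient term is driven by the (possibly sparse) sequence $\{r_j\}$ from Definition \ref{def-1}(iii), whereas the oscillation control $m(x_0)-m_j<\alpha^{-2}$ can force $s_j$ to be much smaller than $r_j$ when $m$ is highly degenerate at $x_0$. A careful diagonal choice $j=j(\alpha)$ keeping $r_j/s_j$ uniformly bounded while sending both to $0$ must be verified; equivalently, a Laplace-type computation shows that for any $x_0\in\partial\Omega$ satisfying Definition \ref{def-1}(iii), the $(N-1)$-dimensional boundary concentration of $e^{2\alpha m}$ necessarily dominates its $N$-dimensional volume concentration by a factor diverging with $\alpha$, which is the intuitive reason the lemma is true irrespective of the degeneracy structure of $m$ at $x_0$.
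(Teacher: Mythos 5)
Your plan — substituting the tent cutoffs $\varphi_j$ of Lemma \ref{upperbound} into \eqref{eq:1.2} and letting the negative boundary integral drive the Rayleigh quotient to $-\infty$ — is the same as the paper's. But you differ in the treatment of the exponential weight and in how $j$ and $\alpha$ are coupled, and that is where a genuine gap appears.

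The paper factors the boundary contribution as a weight factor that stays bounded away from zero once $\alpha$ is fixed, times the purely geometric ratio $\int_{\partial\Omega}\varphi_j^2/\int_\Omega\varphi_j^2$, and then evaluates this latter ratio directly via the coordinate map $\Phi$ to be of order $1/r_j\to\infty$. You instead keep $e^{2\alpha m}$ inside both integrals, bounding the boundary numerator below by $c\,s_j^{N-1}e^{2\alpha m_j}$ and the denominator above by $C\,r_j^N e^{2\alpha m(x_0)}$, so you are left with the decaying factor $e^{-2\alpha(m(x_0)-m_j)}$. You then try to cancel it by choosing $s_j$ as a function of $\alpha$ (so that the oscillation of $m$ on $\overline{B(x_0,s_j)}$ is at most $\alpha^{-2}$) and then picking $r_j$ from the Definition \ref{def-1}(iii) sequence ``comparable to $s_j$''.

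This inverts the logic of the tent construction and is not justified. In Lemma \ref{upperbound} the radius $r_j$ is prescribed first, by Definition \ref{def-1}(iii), and $s_j,\,t_j$ are selected afterward, as small as necessary, so that $m_j>\max_{\overline\Omega\cap\partial B(x_0,r_j)}m$ and $m_j>M_j$; nothing controls the ratio $r_j/s_j$, which can be arbitrarily large when $m$ is degenerate at $x_0$. Reversing this order, as you do, risks (a) losing the required separation altogether, since $m(x_0)-\max_{\overline\Omega\cap\partial B(x_0,r_j)}m$ need not exceed your imposed oscillation bound $\alpha^{-2}$, and (b) even when the separation holds, leaving the gap $m_j-M_j$ itself of order $\alpha^{-2}$, so that $e^{2\alpha(M_j-m_j)}$ stays bounded away from zero while the prefactor $(r_j-t_j)^{-2}(r_j/s_j)^N$ of the gradient contribution diverges. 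In that case the gradient term is not, as you claim, ``vanishing'' and can dominate the boundary term. You flag this tension yourself as ``the main obstacle'' and defer to an unproved ``Laplace-type computation'' asserting that boundary concentration of $e^{2\alpha m}$ always beats volume concentration. That assertion is exactly the nontrivial step your argument lacks; the paper's weight-free surface-to-volume decomposition is set up precisely to avoid having to prove it.
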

  \begin{proof} Let $x_{0}\in\Sigma_{3}$. Then there exists an $r_{0}>0$ such that
   \begin{equation}\label{add-1}
   \mbox{ $\beta(x)\leq \frac{\beta(x_{0})}{2}:=-\delta$\ \ \ \ for all\
    $x\in\partial\Omega\cap B(x_{0},r_{0})$.}
 \end{equation}
We choose the sequences
    $\{r_{j}\},\{s_{j}\}$ and $\{t_{j}\}$ and the functions $\varphi_{j}$
    the same as in the proof of Lemma \ref{upperbound}. For any given $\epsilon>0$, there exists a
    constant $M$ such that, for $\alpha,j>M$,
    \begin{equation}\label{add-2}
      \frac{\int_{\Omega}e^{2\alpha m}|\nabla
        \varphi_{j}|^{2}dx}{\int_{\Omega}e^{2\alpha
          m}\varphi_{j}^{2}dx}+\frac{\int_{\Omega}e^{2\alpha
          m}V\varphi_{j}^{2}dx}{\int_{\Omega}e^{2\alpha
          m}\varphi_{j}^{2}dx}\leq V(x_{0})+\epsilon.
 \end{equation}
Without loss of generality, we may assume that $x_{0}=0$ and
$n(x_{0})=(0,\cdots,0,-1)$. Let $r=|x|$, $f_{j}(r)=\varphi_{j}(x)$. We
define $\Phi:\partial\Omega\cap B(0,r_{0})\to B'(0',r_{0})$,
\[
  \Phi(x)=\frac{|x|}{|x'|}x',
\]
where $x=(x',x_{N})$,
$B'(0,r_{0})=\{x'\in\mathbb{R}^{N-1}:\ \ |x'|<r_{0}\}=B(0,r_{0})\cap\mathbb{R}^{N-1}$. Since
$\partial\Omega$ is smooth, $\Phi$ is a diffeomorphism from
$\partial\Omega\cap B(0,r_{0})$ to $B'(0',r_{0})$ for sufficiently small
$r_{0}$. Moreover, there exists a constant $C$ such that
    \begin{equation}\label{add-3}
 C^{-1} \int_{B'(0,r_{0})}|u(\Phi^{-1}(x))|dx'\leq\int_{\partial\Omega\cap
    B(0,r_{0})}|u(x)|d\mathcal{H}^{N-1}\leq C \int_{B'(0,r_{0})}|u(\Phi^{-1}(x))|dx'
 \end{equation}
for all $u\in L^{1}(\partial\Omega\cap B(0,r_{0}))$.

Notice that $\varphi_{j}(\Phi^{-1}(x))=\varphi_{j}(x)=f_{j}(r)$. We thus deduce from \eqref{add-3} that
\begin{align*}\nonumber
  \lim_{j\to\infty}\frac{\int_{\partial\Omega}\varphi_{j}^{2}d\mathcal{H}^{N-1}}{\int_{\Omega}\varphi_{j}^{2}dx}
  &=\lim_{j\to\infty}\frac{\int_{\partial\Omega\cap B({0},r_{0})}\varphi_{j}^{2}d\mathcal{H}^{N-1}}
  {\int_{\mathbb{R}^{N}}\varphi_{j}^{2}dx}\nonumber\\
  &
  \geq\lim_{j\to\infty}\frac{1}{C}\frac{\int_{\mathbb{R}^{N-1}}\varphi_{j}^{2}dx}
  {\int_{\mathbb{R}^{N}}\varphi_{j}^{2}dx}\nonumber\\
&=\lim_{j\to\infty}\frac{1}{C}\frac{\int_{S^{N-2}}\int_{0}^{r_{j}}f_{j}(r)r^{N-2}drd\mathcal{H}^{N-2}}
{\int_{S^{N-1}}\int_{0}^{r_{j}}f_{j}(r)r^{N-1}drd\mathcal{H}^{N-1}}\nonumber\\
&\geq
  \lim_{j\to\infty}\frac{1}{C}\frac{\mathcal{H}^{N-2}(S^{N-2})}{\mathcal{H}^{N-1}(S^{N-1})}\frac{1}{r_{j}}\nonumber\\
&=\infty.
\end{align*}
By fixing $\alpha>M$, due to \eqref{add-1}, one can easily see that
     \begin{equation}\label{add-4}
        \lim_{j\to\infty}\frac{\int_{\partial\Omega}\beta e^{2\alpha
            m}\varphi_{j}^{2}d\mathcal{H}^{N-1}}{\int_{\Omega}e^{2\alpha
            m}\varphi_{j}^{2}dx}\leq
        -\delta\frac{\min_{\overline{B({0},r_{j})}}e^{2\alpha m}}{e^{2\alpha
            \max_{x\in\overline\Omega}|m({x})|}}\lim_{j\to\infty}\frac{\int_{\partial\Omega}\varphi_{j}^{2}d\mathcal{H}^{N-1}}{\int_{\Omega}\varphi_{j}^{2}dx}
        =-\infty.
    \end{equation}
      Therefore, by \eqref{add-2} and \eqref{add-4}, we have
      \begin{align*}
     \lambda_1(\alpha)&\leq \frac{\int_{\Omega}e^{2\alpha m}|\nabla
                      \varphi_{j}|^{2}dx}{\int_{\Omega}e^{2\alpha
                      m}\varphi_{j}^{2}dx}+\frac{\int_{\Omega}e^{2\alpha
                      m}V\varphi_{j}^{2}dx}{\int_{\Omega}e^{2\alpha
                      m}\varphi_{j}^{2}dx}+\frac{\int_{\partial\Omega}e^{2\alpha
                      m}\beta\varphi_{j}^{2}d\mathcal{H}^{N-1}}{\int_{\Omega}e^{2\alpha
                      m}\varphi_{j}^{2}dx}\\
     &\leq V({0})+\epsilon-\delta\frac{\min_{\overline{B({0},r_{j})}}e^{2\alpha m}}{e^{2\alpha
            \max_{x\in\overline\Omega}|m({x})|}}\frac{\int_{\partial\Omega}\varphi_{j}^{2}d\mathcal{H}^{N-1}}{\int_{\Omega}\varphi_{j}^{2}dx}\\
            &\to
        -\infty,\ \ \ \mbox{as}\ j\to\infty.
 \end{align*}
This completes the proof.
\end{proof}

\begin{remark}\label{re2}{\it The proof of Lemma \ref{unbounded} does not use the degeneracy of critical points of $m$ in $\Omega$; that is, Lemma \ref{unbounded} holds as long as  $\Sigma_{3}$ contains an isolated local maximum of $m$.}
\end{remark}

\subsection{Proof of Theorem \ref{nonnegative}}
In this subsection, we assume that $\beta\geq 0$ and aim to prove Theorem \ref{nonnegative}.

Before going further, let us introduce some notations to be used frequently later. Let $\varphi_{\alpha}$ be the eigenfunctions corresponding to the principal eigenvalue $\lambda_1(\alpha)$, and $w_{\alpha}=e^{\alpha
    m}\varphi_{\alpha}$. Without loss of generality, we may assume that
 $\int_{\Omega}w_{\alpha}^{2}dx=1$ for all $\alpha$.

For each $\alpha\in\mathbb{R}$, we extend $w_{\alpha}$ to all
 $\mathbb{R}^{N}$ by setting it to be zero on
 $\mathbb{R}^{N}\setminus\Omega$. We define a measure $\mu_{\alpha}$
 on $\mathbb{R}^{N}$ by
 \[
   \mu_{\alpha}(A)=\int_{A}w_{\alpha}^{2}dx.
 \]
 Then $\mu_{\alpha}$ is a Radon measure in $\mathbb{R}^{N}$  with
 $\mathrm{supp}(\mu_{\alpha})\subset \overline{\Omega}$ and
   $\mu_{\alpha}(\overline{\Omega})=1$, where $\mathrm{supp}(\mu_{\alpha})$
   denotes the support of $\mu_{\alpha}$. By the weak compactness of Radon
 measures, there exist a sequence $\{\alpha_{k}\}_{k\in\mathbb{N}}$
 and a Radon measure $\mu$ such that $\alpha_{k}\to \infty$ as
 $k\to\infty$ and $\mu_{\alpha_{j}}$ weakly converges to $\mu$ in the sense of
  \begin{equation}\label{weakcon}
  \lim_{k\to\infty}\int_\Omega w_{\alpha_k}^{2}(x)\zeta(x)dx=\int_{\overline\Omega}\zeta(x)d\mu,\  \ \ \forall \zeta\in C(\overline\Omega),
   \end{equation}
fulfilling $\mathrm{supp}(\mu)\subset\overline\Omega$ and $\mu(\overline\Omega)=1$. One may refer to \cite{EG2015}.

To prove Theorems \ref{nonnegative}-\ref{degenerate}, as it will be seen later,
the key ingredient of our mathematical analysis is to gain a precise understanding of $\mathrm{supp}(\mu)$.
To the end, as in \cite{CL2008}, we denote the set of non-critical interior
points of $m$ by
$$
\Omega_{1}=\{x\in\Omega:\ \ |\nabla m(x)|\neq 0\},
$$
and the set of non-degenerate interior
critical points which are not local maxima by
$$
\Omega_{2}=\{x\in\Omega:\ \ |\nabla m(x)|=0,\ \exists e\in S^{N-1}\ \text{
     such that}\ (e\cdot\nabla)^{2}m(x)>0\},
$$
and the set of non-critical boundary
points by
$$
\Omega_{3}=\left\{x\in\partial\Omega:\ \ |\nabla m(x)|>\left|\frac{\partial m}{\partial n}(x)\right|\right\},
$$
and divide all boundary critical points which are not local maxima into
the following three categories:
 \begin{equation*}
 \begin{aligned}
   &\Omega_{4}=\left\{x\in\partial\Omega:\ \ |\nabla m(x)|=-\frac{\partial m}{\partial n}(x)>0\right\},\\
   &\Omega_{5}=\left\{x\in\partial\Omega:\ \ |\nabla m(x)|=\frac{\partial m}{\partial n}(x)>0,\ \exists e\in
     S^{N-1}\ \text{such that} \ e\perp n,\ (e\cdot\nabla)^{2}m(x)>0\right\},\\
   &\Omega_{6}=\left\{x\in\partial\Omega:\ \ |\nabla m(x)|=0,\ \exists e\in
     S^{N-1}\ \text{such that}\ (e\cdot\nabla)^{2}m(x)>0\right\}.
 \end{aligned}
\end{equation*}

With the above preparation, we are now ready to present the proof of Theorem \ref{nonnegative}.

\begin{proof}[Proof of Theorem \ref{nonnegative}] We first prove (i). The upper bound estimate
  \begin{equation}\label{ineq1}
\limsup_{\alpha\to\infty}\lambda_1(\alpha)\leq \min_{\Sigma_{1}\cup\Sigma_{2}}V(x)
\end{equation}
follows directly from Lemma \ref{upperbound}.

In what follows, we shall prove
\begin{equation}\label{ineq1-a}
\liminf_{\alpha\to\infty}\lambda_1(\alpha)\geq \min_{\Sigma_{1}\cup\Sigma_{2}}V(x).
\end{equation}
If we can show $\text{supp}(\mu)\subset\Sigma_{1}\cup\Sigma_{2}$, it then follows that
\begin{equation}\label{ineq2}
  \begin{split}
    \liminf_{\alpha\to\infty}\lambda_1(\alpha)&=\liminf_{\alpha\to\infty}\frac{\int_{\Omega}e^{2\alpha
        m}|\nabla\varphi_{\alpha}|^{2}+e^{2\alpha
        m}V\varphi_{\alpha}^{2}dx+\int_{\partial\Omega}\beta e^{2\alpha
        m}\varphi_{\alpha}^{2}d\mathcal{H}^{N-1}}{\int_{\Omega}e^{2\alpha
        m}\varphi_{\alpha}^{2}dx}\\
    &\geq\liminf_{\alpha\to\infty}\frac{\int_{\Omega}e^{2\alpha
        m}V\varphi_{\alpha}^{2}dx}{\int_{\Omega}e^{2\alpha
        m}\varphi_{\alpha}^{2}dx}\\
    &=\liminf_{\alpha\to\infty}\frac{\int_{\Omega}Vw_{\alpha}^{2}dx}{\int_{\Omega}w_{\alpha}^{2}dx}\\
    &=\frac{\int_{\overline{\Omega}}Vd\mu}{\mu(\overline{\Omega})}\\
    &\geq \min_{\Sigma_{1}\cup\Sigma_{2}}V(x),
\end{split}
\end{equation}
which verifies \eqref{ineq1-a}. Hence, a combination of \eqref{ineq1} and \eqref{ineq2} gives
\[
\lim_{\alpha\to\infty}\lambda_1(\alpha)=\min_{x\in\Sigma_{1}\cup\Sigma_{2}}V(x),
\]
as wanted.

Let $\Omega_{i}\ (1\leq i\leq 6)$ be defined as before, and further set
$$
 \Sigma_{4}=\{x\in\Sigma\cap\partial\Omega:\ \ \beta(x)>0\}.
 $$
In order to show $\text{supp}(\mu)\subset\Sigma_{1}\cup\Sigma_{2}$, it is not hard to see that we just need to prove $\mu(\Sigma_{4})=0$ and $\mu(\Omega_{i})=0\ (1\leq i\leq 6)$.

By means of \eqref{ineq1} and $\beta\geq0$, we observe that
\begin{equation}\label{bound-abc}
\int_{\Omega}|\nabla w_{\alpha}-\alpha w_{\alpha}\nabla
m|^{2}dx\leq C
 \end{equation}
for some positive constant $C$, which does not depend on $\alpha\geq0$.
Thus, making use of \eqref{bound-abc}, one can follow the same arguments as in \cite[Lemmas 3.1-3.4 and Lemma 4.1]{CL2008} to claim that $\mu(\Omega_{i})=0$ for each $1\leq i\leq 6$.
Thus, it remains to verify $\mu(\Sigma_{4})=0$.

In the sequel, we are going to prove that
\begin{equation}\label{bound-ab4}
\mu(\Sigma_4)=0.
 \end{equation}
By our assumption of the non-degeneracy of critical points, $\Sigma_4$ is a discrete set. It is sufficient to show that $\mu(\{x_{0}\})=0$ for each
$x_{0}\in\Sigma_{4}$.

By \eqref{ineq1} and $\int_{\Omega}w_{\alpha}^{2}dx=1$, it is clear that
 \begin{equation}\label{bound-a}
\int_{\Omega}|\nabla w_{\alpha}-\alpha w_{\alpha}\nabla
m|^{2}dx+\int_{\partial\Omega}\beta w_{\alpha}^{2}d\mathcal{H}^{N-1}\leq C_{*}
 \end{equation}
for some positive constant $C_{*}$, independent of $\alpha\geq0$.

We may assume that $x_{0}=0$, and shall prove $\mu(\{0\})=0$ by a contradiction argument. Supposing that $\mu(\{0\})>0$, we will show that
either
\[
\limsup_{\alpha\to\infty}\int_{\partial\Omega}\beta w_{\alpha}^{2}d\mathcal{H}^{N-1}=\infty,
\]
or
\[
\limsup_{\alpha\to\infty}\int_{\Omega}|\nabla w_{\alpha}-\alpha
w_{\alpha}\nabla m|^{2}dx=\infty
\]
holds, which leads to a contradiction against the fact \eqref{bound-a}.

We assume that
 \begin{equation}\label{bound-b}
\limsup_{\alpha\to\infty}\int_{\partial\Omega}\beta w_{\alpha}^{2}d\mathcal{H}^{N-1}<\infty,
 \end{equation}
otherwise there is nothing to do.

In the following, we have to distinguish two different cases.

\vskip4pt \textbf{Case 1.} $\partial_{n}m(0)=|\nabla m(0)|>0$ and
$(e\cdot\nabla)^{2}m<0$ for all $e\perp n(0)$, $e\in S^{N-1}$. In this case, by a rotation, we may assume that $n(0)=(0,\cdots,0,-1)$. Near the point $0$, the boundary of
$\Omega$ can be expressed by
\[
  \{x=(x',x_{N}=\psi(x'))\},\ \ \ \text{ where }\ \,
  x'=(x_{1},\cdots,x_{N-1})
  \]
with $\psi(x')\in C^{2}(\mathbb{R}^{N-1})$ and
$\nabla_{x'}\psi(0')=0$. Locally the boundary can be flattened by a
simple diffeomorphisim
\[
  x=X(z):=(z',\psi(z')+z_{N}),\ \ z=Z(x)=(x',x_{N}-\psi(x')).
\]
It is easy to see that
\[
\det\left(\frac{\partial X(z)}{\partial z}\right)=1,\ \ \
\det\left(\frac{\partial Z(x)}{\partial x}\right)=1.
  \]

Let $\tilde{w}_{\alpha}(z)=w_{\alpha}(X(z))$, $\tilde{\varphi}_{\alpha}(z)=\varphi_{\alpha}(X(z))$ and $\tilde{m}(z)=m(X(z))$. We choose a small constant
$r_{0}>0$ such that, for $z\in B'(0',r_{0})$, $|\nabla \psi(z')|<1$ and
\[
  \beta(X(z))\geq \frac{\beta(0)}{2}:=\delta>0.
\]
Since
\[
\int_{X(B'(0',r_{0}))}w_{\alpha}^{2}d\mathcal{H}^{N-1}=\int_{B'(0',r_{0})}\tilde
w_{\alpha}^{2}\left(1+|\nabla_{x'}\psi|^{2}\right)^{1/2}dx',
\]
we have
\[
\int_{B'(0',r_{0})}\tilde
w_{\alpha}^{2}dx'\leq \int_{X(B'(0',r_{0}))}w_{\alpha}^{2}d\mathcal{H}^{N-1}\leq
\frac{1}{\delta}\int_{X(B'(0',r_{0}))}\beta w_{\alpha}^{2}d\mathcal{H}^{N-1}<\infty
\]
due to \eqref{bound-b}. Thus there exists a positive constant $C$ independent of all large $\alpha$, such that
\begin{equation}\label{in-0}
\int_{B'(0',r_{0})}\tilde{w}_{\alpha}^{2}dx'\leq C.
\end{equation}
We choose $r>0$ sufficiently small such that
$Z(B(0,r/2)\cap\Omega)\subset B'(0,r)\times(0,r)$.
By the assumption that $\mu(\{0\})>0$, there
exists a sequence of $\{\alpha\}$, denoted by itself for convenience, such that
\[
  \int_{B(0,r/2)\cap\Omega}w_{\alpha}^{2}dx>\frac{\mu(\{0\})}{2},
\]
which yields
\[
\int_{B'(0',r)\times(0,r)}\tilde{w}_{\alpha}^{2}dz\geq \frac{\mu(\{0\})}{2}.
\]
Thus there exists $\tau\in(0,r)$ such that
\begin{equation}\label{in-1}
\int_{B'(0',r)\times\{\tau\}}\tilde{w}_{\alpha}^{2}dz'\geq \frac{\mu(\{0\})}{2r}.
\end{equation}
In addition, because of $\partial_{n}m(0)=-\partial_{z_{N}}\tilde{m}(0)>0$, we may
choose $r\in(0,r_{0})$ small enough such that
\begin{equation}\label{in-2}
\partial_{z_{N}}\tilde{m}(z)<0\ \ \text{ in }\, B'(0',r)\times[0,r].
\end{equation}
For any given $f$,  let $\tilde{f}(z)=f(X(z))=f(x)$. Then we have
$$
\mbox{$\partial_{z_{k }}f(z)=\partial_{x_{k}}f(x)+\partial_{x_{N}}f(x)\partial_{x_{k}}\psi(x')$, \ \ for\, $k=1,2,\cdots, N-1,$}
$$
and
$\partial_{Z_{N}}\tilde{f}(z)=\partial_{x_{N}}f(x)$. Thus
\begin{equation}\label{in-2-1}
  \begin{aligned}
  |\nabla\tilde{f}(z)|&=|\nabla
  f(x)+\partial_{x_{N}}f(x)(\nabla_{x'}\psi(x'),0)|\\
  &\leq |\nabla
  f(x)|+|\partial_{x_{N}}f(x)(\nabla_{x'}\psi(x'),0)|\\
  &\leq 2|\nabla f(x)|,
  \end{aligned}
  \end{equation}
  where $\nabla_{x'}\psi=(\partial_{x_{1}}\psi,\cdots,\partial_{x_{N-1}}\psi)$.

 Using \eqref{in-2} and \eqref{in-2-1}, some calculation shows that
\begin{equation}\label{in-3}
  \begin{aligned}
   & \int_{\Omega}|\nabla w_{\alpha}-\alpha w_{\alpha}\nabla m|^{2}dx\\
   &=\int_{\Omega}e^{2\alpha m}|\nabla \varphi_{\alpha}|^{2}dx\\
   &\geq\frac{1}{2}\int_{B'(0',r)\times(0,r)}e^{2\alpha\tilde m}|\nabla\tilde \varphi_{\alpha}|^{2}dz\\
    &=\frac{1}{2}\int_{B'(0',r)\times(0,r)}|\nabla\tilde{w}_{\alpha}
    -\alpha\tilde{w}_{\alpha}\nabla\tilde{m}|^{2}dz\\
    &\geq\frac{1}{2}\int_{B'(0',r)\times(0,r)}|\partial_{z_{N}}\tilde{w}_{\alpha}
    -\alpha\tilde{w}_{\alpha}\partial_{z_{N}}\tilde{m}|^{2}\chi_{\{\partial_{z_{N}}\tilde{w}_{\alpha}>0\}}dz\\
    &\geq\frac{1}{2}\int_{B'(0',r)\times(0,r)}[(\partial_{z_{N}}\tilde{w}_{\alpha})^{+}]^{2}dz\\
    &=\frac{1}{2}\int_{B'(0',r)}\int_{0}^{r}[(\partial_{z_{N}}\tilde{w}_{\alpha})^{+}]^{2}dz_{N}dz'\\
   & \geq
   \frac{1}{2}\int_{B'(0',r)}\left(\int_{0}^{r}1^{2}dz_{N}\right)^{-1}\left(\int_{0}^{r}
   (\partial_{z_{N}}\tilde{w}_{\alpha})^{+}dz_{N}\right)^{2}dz'\\
&= \frac{1}{2r}\int_{B'(0',r)}\left(\int_{0}^{r}(\partial_{z_{N}}\tilde{w}_{\alpha})^{+}dz_{N}\right)^{2}dz'.
 \end{aligned}
\end{equation}

Let $\mathcal{Q}=\{x'\in B'(0',r):\ \ \tilde{w}_{\alpha}(x',\tau)\geq
\tilde{w}_{\alpha}(x',0)\}$. Then for each $x'\in \mathcal{Q}$, we have
\[
\tilde{w}_{\alpha}(x',\tau)-
\tilde{w}_{\alpha}(x',0)=\int_{0}^{\tau}\partial_{z_{N}}\tilde{w}_{\alpha}(x',z_{N})dz_{N}\leq \int_{0}^{\tau}\left(\partial_{z_{N}}\tilde{w}_{\alpha}\right)^{+}(x',z_{N})dz_{N}.
\]
Thus, it holds that
 \begin{equation}\label{in-4}
\begin{aligned}
  \int_{\mathcal{Q}}\left(\int_{0}^{r}(\partial_{z_{N}}\tilde{w}_{\alpha})^{+}dz_{N}\right)^{2}dz'
  &\geq\int_{\mathcal{Q}}\left(\tilde{w}_{\alpha}(z',\tau)\right)^{2}
  -2\tilde{w}_{\alpha}(z',\tau)\tilde{w}_{\alpha}(z',0)+\left(\tilde{w}_{\alpha}(z',0)\right)^{2}dz'\\
  &\geq\int_{\mathcal{Q}}\frac{1}{2}\left(\tilde{w}_{\alpha}(z',\tau)\right)^{2}
  -3\left(\tilde{w}_{\alpha}(z',0)\right)^{2}dz'\\
   &= \frac{1}{2}\int_{\mathcal{Q}}\left(\tilde{w}_{\alpha}(z',\tau)\right)^{2}
   -6\left(\tilde{w}_{\alpha}(z',0)\right)^{2}dz'.
 \end{aligned}
\end{equation}

On the other hand, for $x'\in B'(0',r)\setminus \mathcal{Q}$, due to
\[
 \tilde{w}_{\alpha}(x',\tau)<
\tilde{w}_{\alpha}(x',0),
\]
  we have
  \begin{equation}\label{in-5}
    \int_{B'(0',r)\setminus
      \mathcal{Q}}\left(\tilde{w}_{\alpha}(z',\tau)\right)^{2}
      -6\left(\tilde{w}_{\alpha}(z',0)\right)^{2}dz\leq 0.
  \end{equation}

Therefore, it follows from \eqref{in-4} and \eqref{in-5} that
 \begin{equation*}
\begin{aligned}
  \int_{B'(0',r)}\left(\int_{0}^{r}(\partial_{z_{N}}\tilde{w}_{\alpha})^{+}dz_{N}\right)^{2}dz'
 & \geq
  \int_{\mathcal{Q}}\left(\int_{0}^{r}(\partial_{z_{N}}\tilde{w}_{\alpha})^{+}dz_{N}\right)^{2}dz' \\
  &\geq
  \frac{1}{2}\int_{\mathcal{Q}}\left(\tilde{w}_{\alpha}(z',\tau)\right)^{2}
  -6\left(\tilde{w}_{\alpha}(z',0)\right)^{2}dz'\\
   &\geq
   \frac{1}{2}\int_{B'(0',r)}\left(\tilde{w}_{\alpha}(z',\tau)\right)^{2}
   -6\left(\tilde{w}_{\alpha}(z',0)\right)^{2}dz'\\
   &=
   \frac{1}{2}\left(\int_{B'(0,r)\times\{\tau\}}\tilde{w}_{\alpha}^{2}dz'
   -6\int_{B'(0,r)\times\{0\}}\tilde{w}_{\alpha}^{2}dz'\right),
     \end{aligned}
\end{equation*}
and consequently, thanks to \eqref{in-0} and \eqref{in-1}, we obtain
from \eqref{in-3} that, for any given $M<\infty$,
 \begin{equation*}
   \begin{aligned}
     \int_{\Omega}|\nabla w_{\alpha}-\alpha m\nabla
     w_{\alpha}|^{2}dx&\geq
    \frac{1}{2r} \int_{B'(0',r)}\left(\int_{0}^{r}(\partial_{z_{N}}\tilde{w}_{\alpha})^{+}dz_{N}\right)^{2}dz' \\
   &\geq \frac{1}{4r}\left(\frac{\mu(\{0\})}{2r}-6C\right)\\
   &\geq M,
 \end{aligned}
\end{equation*}
provided that $r$ is sufficiently small and $\alpha$ is sufficiently large, which
contradicts with \eqref{bound-a}. Thus we can conclude that $\mu(\{0\})=0$.

\vskip5pt \textbf{Case 2.} $|\nabla m(0)|=0$ and $(e\cdot\nabla)^{2}m(0)<0$ for all
$e\in S^{N-1}$. We shall utilize a similar technique as in Case 1 to produce a contradiction.

Since $|\nabla m(0)|=0$ and $\det(D^{2}m(0))\neq 0$, by \cite[Lemma 3.1]{CL2012}, there
exist a constant $r_{0}>0$ and a smooth vector field
$$\mbox{$\tau(x):\ \ B(0,r_{0})\to\mathbb{R}^{N}$ such that $\tau\cdot n>0$ and
$\tau\cdot\nabla m=0$ on $\partial\Omega\cap B(0,r_{0})$.}$$
We may assume that $|\tau|\equiv 1$.
Since
\[
(\tau(0)\cdot\nabla)^{2} m(0)<0,
\]
there exists $r_{1}\in(0,r_{0})$ such that
\[
(\tau(x)\cdot\nabla)^{2}m(x)<0,\ \ \, \forall x\in\Omega\cap
B(0,r_{1}),
\]
from which it then follows that
\[
\tau(x)\cdot\nabla m(x)<0,\ \ \ \forall x\in\Omega\cap
B(0,r_{1}).
\]

Let
\[
  \Omega_{t}=\{x\in\Omega:\ \ \text{dist}(x,\partial\Omega)> t\}.
\]
Since
$\partial\Omega$ is smooth, $\partial\Omega_{t}$ is an $(N-1)$-dimensional smooth
manifold if $t$ is small enough. For $x\in\partial\Omega_{t}$, we still
use $n(x)$ to denote the unit outward normal vector of $\Omega_{t}$ at
$x$.  Since $\tau(x)\cdot n(x)>0$ for $x\in\partial\Omega\cap
B(0,r_0)$, there exist constants $\tau_0$ and $r_{2}\in(0,r_{1})$ such that
\[
  \inf_{x\in
  \Omega\cap B(0,r_2)}\tau(x)\cdot n(x)\geq
\tau_0>0.
\]

For any given $x\in\partial\Omega\cap B(0,r_{2})$, we denote
$\xi(x,t)$ to be the solution of the ordinary differential system
\[
  \frac{d\xi(t)}{dt}=-\tau(\xi(t)),\ \ \ \xi(0)=x.
  \]
  For each $x\in\partial\Omega\cap B(0,r_{2})$ and $t\in(0,r_{2})$,
  there exists $p(x,t)$ such that
  $\xi(x,p(x,t))\in\partial\Omega_{t}$. Moreover,
 \begin{equation}\label{in-06}
    \frac{\partial}{\partial t}p(x,t)=(\tau(x)\cdot
    n(x))^{-1}\in[1,\tau_{0}^{-1}].
  \end{equation}
Let  $r_{3}\in(0,r_{2})$. For each $t\in[0,r_{3})$, we define
\[
\Gamma_{t}=\{\xi(x,p(x,t)):\ \ x\in\partial\Omega\cap B(0,r_{3})\}.
\]
Then $\Gamma_{t}\subset\partial\Omega_{t}$ is an $(N-1)$-dimensional
smooth manifold. We may choose $r_{3}$ small enough such that
$\Gamma_{t}\subset B(0,r_{2})$ for all $t\in (0,r_{3})$. Clearly,
for sufficiently small $r_{4}\in(0,r_{3})$,
 \begin{equation}\label{add-6}
  \overline{\Omega}\cap B(0,r_{4})\subset
  \cup_{t\in[0,r_{3}]}\Gamma_{t}.
  \end{equation}
For each $t\in(0,r_{3})$,
\[
  \Phi^{t}:\Gamma_{0}\to\Gamma_{t},\ \ \ \Phi^{t}(x)=\xi(x,p(x,t))
  \]
is a diffeomorphism. There exists a constant $\sigma>0$ such that
  \begin{equation}\label{pushforward}
\sigma\mathcal{H}^{N-1}|_{\Gamma_{t}}\leq\Phi^{t}_{*}\left(\mathcal{H}^{N-1}|_{\Gamma_{0}}\right)\leq \sigma^{-1} \mathcal{H}^{N-1}|_{\Gamma_{t}}
    \end{equation}
    for all $t\in(0,r_{3})$. Here,
    $\Phi^{t}_{*}\left(\mathcal{H}^{N-1}|_{\Gamma_{0}}\right)$ is
    the push-forward measure of $\mathcal{H}^{N-1}$ restricted to
    $\Gamma_{0}$.

Using the formula of integration over level sets (see
e.g. \cite[Theorem 3.13]{EG2015}), we have
  \begin{equation}\label{add-7}
  \int_{\cup_{t\in[0,\delta]}\Gamma_{t}}f(x)dx=\int_{0}^{\delta}\int_{\Gamma_{t}}f(x)d\mathcal{H}^{N-1}dt,
    \end{equation}
for all $f\in L^{1}(\Omega)$.

As in Case 1, towards a contradiction we suppose that $\mu(\{0\})>0$ and \eqref{bound-b} holds. Since $\beta(0)>0$, we may choose $r_{4}\in(0,r_{3})$ so small that $\beta(x)\geq
\frac{\beta(0)}{2}$ for all $x\in\partial\Omega\cap B(0,r_{4})$. Thus, by \eqref{bound-b}, there exists a constant $C$, independent of all $\alpha\geq1$, such that
 \begin{equation}\label{in-6}
\int_{\Gamma_{0}}w_{\alpha}^{2}d\mathcal{H}^{N-1}\leq C,\ \ \ \forall \alpha\geq1.
    \end{equation}

On the other hand, by virtue of $\mu(\{0\})>0$, we may assume
\[
\int_{B(0,r_4)\cap\Omega}w_{\alpha}^{2}dx>\frac{\mu(\{0\})}{2}
\]
for all large $\alpha$. Combined with \eqref{add-6} and \eqref{add-7},
this then gives
\[
\int_{\cup_{t\in[0,r_{4}]}\Gamma_{t}}w_{\alpha}^{2}dx
=\int_{0}^{r_{4}}\int_{\Gamma_{t}}w_{\alpha}^{2}d\mathcal{H}^{N-1}dt\geq \int_{\Omega\cap
  B(0,r_4)}w_{\alpha}^{2}dx\geq \frac{\mu(\{0\})}{2}>0,
\]
which allows us to find $\delta\in(0,r_{4})$ such that
 \begin{equation}\label{in-7}
\int_{\Gamma_{\delta}}w_{\alpha}^{2}d\mathcal{H}^{N-1}\geq \frac{\mu(\{0\})}{2r_{4}}.
\end{equation}

Similarly to Case 1, since $\tau\cdot \nabla m<0$ and $|\tau|\equiv 1$ in $\Omega\cap B(0,r_{1})$,  we have
  \begin{equation}\label{in-8}
\begin{aligned}
    \int_{\Omega}|\nabla w_{\alpha}-\alpha w_{\alpha}\nabla m|^{2}dx    &\geq\int_{\cup_{t\in[0,\delta]}\Gamma_{t}}|\nabla
    w_{\alpha}-\alpha w_{\alpha}\cdot\nabla m|^{2}dx\\
    &\geq\int_{\cup_{t\in[0,\delta]}\Gamma_{t}}|\tau\cdot\nabla
    w_{\alpha}-\alpha w_{\alpha}\tau\cdot\nabla m|^{2}dx\\
    &\geq \int_{0}^{\delta}\int_{\Gamma_{t}}[(\tau\cdot\nabla
    w_{\alpha})^{+}]^{2}d\mathcal{H}^{N-1}dt.
    \end{aligned}
  \end{equation}
On the other hand, it follows from \eqref{pushforward} that
  \[
\sigma\int_{\Gamma_{0}}|u(\Phi^{t}(x))|^{2}d\mathcal{H}^{N-1}\leq
\int_{\Gamma_{t}}|u(x)|^{2}d\mathcal{H}^{N-1}\leq \sigma^{-1}\int_{\Gamma_{0}}|u(\Phi^{t}(x))|^{2}d\mathcal{H}^{N-1},
\]
for all $u\in W^{1,2}(\Omega)$. Thus, in light of \eqref{in-06} and \eqref{in-8}, we obtain
 \begin{equation}\label{in-9}
   \begin{aligned}
  \int_{\Omega}|\nabla w_{\alpha}-\alpha w_{\alpha}\nabla m|^{2}dx &\geq \sigma\int_{0}^{\delta}\int_{\Gamma_{0}}[(\tau\cdot\nabla
  w_{\alpha})^{+}]^{2}(\xi(x,p(x,t)))d\mathcal{H}^{N-1}dt\\
 &= \sigma\int_{\Gamma_{0}}\int_{0}^{\delta}[(\tau\cdot\nabla
    w_{\alpha})^{+}]^{2}(\xi(x,p(x,t)))dtd\mathcal{H}^{N-1}\\
    &= \sigma\int_{\Gamma_{0}}\int_{0}^{\delta}[(\partial_{t}w_{\alpha}(\xi(x,p(x,t)))^{+}\left(\partial_{t}p(x,t)\right)^{-1}]^{2}dtd\mathcal{H}^{N-1}\\
    &\geq  \sigma
    \tau_{0}^{2}\int_{\Gamma_{0}}\int_{0}^{\delta}[(\partial_{t}w_{\alpha}(\xi(x,p(x,t))))^{+}]^{2}dtd\mathcal{H}^{N-1}\\
    &\geq \sigma
    \tau_{0}^{2}\int_{\Gamma_{0}}\left(\int_{0}^{\delta}1^{2}dt\right)^{-1}\left(\int_{0}^{\delta}[\partial_{t}w_{\alpha}(\xi(x,p(x,t)))]^{+}dt\right)^{2}d\mathcal{H}^{N-1}\\
    &\geq \frac{\sigma
    \tau_{0}^{2}}{\delta}\int_{\Gamma_{0}}\left(\int_{0}^{\delta}[\partial_{t}w_{\alpha}(\xi(x,p(x,t)))]^{+}dt\right)^{2}d\mathcal{H}^{N-1}.
\end{aligned}
\end{equation}

Denote $\mathcal{Q}=\{x:\ \ x\in \Gamma_{0},\, w_{\alpha}(\xi(x,p(x,\delta)))\geq
w_{\alpha}(\xi(x,p(x,0)))\}$. Similarly to Case 1, we have
\begin{equation}\label{in-9-1}
  \begin{aligned}
    \int_{\Gamma_{0}}\left(\int_{0}^{\delta}[\partial_{t}w_{\alpha}(\xi(x,p(x,t)))]^{+}dt\right)^{2}d\mathcal{H}^{N-1}&\geq
    \int_{\mathcal{Q}}\left(\int_{0}^{\delta}[\partial_{t}w_{\alpha}(\xi(x,p(x,t)))]^{+}dt\right)^{2}d\mathcal{H}^{N-1}\\
    &\geq
    \int_{\mathcal{Q}}\left(w_{\alpha}(\xi(x,p(x,\delta))-w_{\alpha}(\xi(x,0)\right)^{2}d\mathcal{H}^{N-1}\\
    &\geq
    \int_{\mathcal{Q}}\frac{1}{2}(w_{\alpha}(\xi(x,p(x,\delta)))^{2}-3(w_{\alpha}(\xi(x,0))^{2}d\mathcal{H}^{N-1}\\
      &\geq
    \frac{1}{2}\int_{\Gamma_{0}}(w_{\alpha}(\xi(x,p(x,\delta)))^{2}-6(w_{\alpha}(\xi(x,0))^{2}d\mathcal{H}^{N-1}.
  \end{aligned}
\end{equation}
As a result, using \eqref{in-6}-\eqref{in-9-1}, we infer that
 \begin{equation}\label{ineq3}
   \begin{aligned}
 \int_{\Omega}|\nabla w_{\alpha}-\alpha w_{\alpha}\nabla m|^{2}dx
&\geq \frac{\sigma
    \tau_{0}^{2}}{\delta}\int_{\Gamma_{0}}\left(\int_{0}^{\delta}[\partial_{t}w_{\alpha}(\xi(x,p(x,t)))]^{+}dt\right)^{2}d\mathcal{H}^{N-1}\\
    &\geq \frac{
      \sigma\tau_{0}^{2}}{2\delta}\int_{\Gamma_{0}}(w_{\alpha}(\xi(x,p(x,\delta)))^{2}-6(w_{\alpha}(\xi(x,0))^{2}d\mathcal{H}^{N-1}\\
     &\geq \frac{\sigma^{2}\tau_{0}^{2}}{2\delta}\int_{\Gamma_{\delta}}w_{\alpha}^{2}d\mathcal{H}^{N-1}-
    \frac{6\sigma\tau_{0}^{2}}{\delta}\int_{\Gamma_{0}}w_{\alpha}^{2}d\mathcal{H}^{N-1}\\
    &\geq \frac{\sigma^{2}\tau_{0}^{2}\mu(\{0\})}{4\delta
      r_{4}}-\frac{6\sigma\tau_{0}^{2}C}{\delta}.
\end{aligned}
\end{equation}

For any given $r_{4}$, the inequality \eqref{ineq3} holds for
all large $\alpha$. Thus we can choose $r_{4}$ sufficiently small such that
\[
 \frac{\sigma^{2}\tau_{0}^{2}\mu(\{0\})}{4\delta
   r_{4}}-\frac{6\sigma\tau_{0}^{2}C}{\delta}>C_*,
\]
which contradicts with \eqref{bound-a}. Therefore we can assert
that $\mu(\{0\})=0$ and so Theorem \ref{nonnegative}(i) holds.

\vskip6pt
We next verify Theorem \ref{nonnegative}(ii). We argue indirectly again by supposing that
  \begin{equation}\label{in-10}
   \liminf_{\alpha\to\infty}\lambda_1(\alpha)\leq M,
\end{equation}
 for some constant $M$. Then there exists a sequence of $\alpha$ which converges to $\infty$, denoted by itself for notational convenience, such that $\lambda_1(\alpha)\leq M+1$ for all $\alpha\geq0$. This, together with \eqref{in-10}, implies that
 \[
   \int_{\Omega}|\nabla w_{\alpha}-\alpha w_{\alpha}\nabla
   m|^{2}dx+\int_{\partial\Omega}\beta w_{\alpha}^{2}d\mathcal{H}^{N-1}\leq
   M+1+\max_{x\in\overline\Omega}V(x)
\]
for all large $\alpha$. Similarly to the argument in (i), under the assumption $\Sigma_{1}\cup\Sigma_{2}=\emptyset$, we are able to show that
\[
  \text{supp}(\mu)\cap\overline{\Omega}=\emptyset,
\]
which leads to $\mu(\overline{\Omega})=0$, contradicting with $\mu(\overline{\Omega})=1$. Therefore
one must have
\[
  \lim_{\alpha\to\infty}\lambda_1(\alpha)=\infty.
\]
This proves Theorem \ref{nonnegative}(ii).
\end{proof}

\subsection{Proof of Theorem \ref{Dirichlet}} In this subsection, we consider the eigenvalue problem (\ref{eq:1.1-d}). It is known that the principal eigenvalue of (\ref{eq:1.1-d}) can be characterized by
\begin{equation}\label{eq:3}
  \begin{split}
    {\lambda}(\alpha)&=\inf_{\varphi\in
                     H_{0}^{1}(\Omega)}\frac{\int_{\Omega}e^{2\alpha
                     m}(|\nabla\varphi|^{2}+V\varphi^{2})dx}{\int_{\Omega}e^{2\alpha
                     m}\varphi^{2}dx}\\
&=\inf_{w\in
                   H_{0}^{1}(\Omega),\,\int_{\Omega}w^{2}dx=1}\int_{\Omega}|\nabla
                   w-\alpha w\nabla
                   m|^{2}+Vw^{2}dx.
\end{split}
\end{equation}
Indeed, the second variational characterization in \eqref{eq:3} is derived through
the substitution $\varphi=e^{-\alpha m}w$ in (\ref{eq:1.1-d}). Obviously, $w=e^{\alpha m}\varphi$
solves
 \begin{equation}
  \nonumber
\begin{cases}
  -\Delta w+\left(\alpha^2|\nabla m|^2+\alpha\Delta m+V\right)w=\lambda_1(\alpha)w &\hbox{ in }\Omega,\\
  w=0 &\hbox{ on }\partial\Omega.
\end{cases}
\end{equation}

We now present the proof of Theorem \ref{Dirichlet}.

\begin{proof}[Proof of Theorem \ref{Dirichlet}] We consider the eigenvalue problem
  \eqref{eq:1.1} with $\beta\equiv 1$, and denote the corresponding principal eigenvalue by
  $\lambda_1(\alpha)$. Also for notational clarity, we denote by $\tilde {\lambda}_1(\alpha)$ the principal eigenvalue of the eigenvalue problem \eqref{eq:1.1-d}. Then, it is well known that
  \[
\tilde{\lambda}_1(\alpha)\geq \lambda_1(\alpha),\ \  \ \forall \alpha>0.
\]
Observe that $\Sigma_{2}=\emptyset$ due to $\beta\equiv 1$. Thus, if $\Sigma_{1}=\emptyset$, Theorem \ref{Dirichlet}(ii) readily follows from Theorem \ref{nonnegative}(ii).

If $\Sigma_{1}\not=\emptyset$, we infer from Theorem \ref{nonnegative}(i) that
 \begin{equation}
  \label{eq:3a}
\liminf_{\alpha\to\infty}\tilde{\lambda}_1(\alpha)\geq\liminf_{\alpha\to\infty}{\lambda_1}(\alpha)
=\min_{x\in\Sigma_{1}}V(x).
\end{equation}

Let $x_{0}\in\Sigma_{1}$ and $\varphi_{j}(x)$ be the functions defined
in the proof of Lemma \ref{upperbound}. Clearly $\varphi_{j}(x)=0$ on
$\partial\Omega$. Substituting $\varphi_{j}$ into the second expression of \eqref{eq:3}, we obtain, similarly to the proof of Lemma \ref{upperbound}, that
\[
  \limsup_{\alpha\to\infty}\tilde{\lambda}_1(\alpha)\leq V(x_{0}).
\]
Since $x_{0}$ is arbitrary, we have
 \begin{equation}
  \label{eq:3b}
  \limsup_{\alpha\to\infty}\tilde{\lambda}_1(\alpha)\leq\min_{x\in\Sigma_{1}} V(x).
\end{equation}
Combining \eqref{eq:3a} and \eqref{eq:3b}, we deduce Theorem \ref{Dirichlet}(i).
 \end{proof}

\subsection{Proof of Theorem \ref{general}}
In this subsection, we consider the general case that $\beta(x)$ may change its
sign or be negative.

Given $0<r_0<1$, for each $x\in\partial\Omega^{-}_{r_{0}}=\{x\in \partial\Omega:\ \ \mathrm{dist}(x,\partial\Omega^{-})<r_{0}\}$, let $\eta(x,t)$ be the
 solution of
 \[
   \frac{d\eta}{dt}(x,t)=\frac{-\nu(\eta(x,t))}{\nu(\eta(x,t))\cdot
     n(\eta(x,t))},\ \ \ \eta(x,0)=x.
 \]

 For each $x\in\Omega^{-}_{r_{0}}$ and $t\in(0,r_{0})$,
 $\eta(x,t)\in\partial\Omega_{t}^-$.  Let us define
\[
\Gamma_{t}=\{\eta(x,t):\ \ x\in\partial\Omega_{r_{0}}^{-}\}.
\]
Then the mapping
 \[
\Psi^{t}:\ \ \Gamma_{0}\to \Gamma_{t},\ \ \Psi^{t}(x)=\eta(x,t)
\]
is a diffeomorphism, and there exists a constant $\sigma>0$
such that
 \begin{equation*}
  \sigma\mathcal{H}^{N-1}|_{\Gamma_{t}}\leq \Psi^{t}_{*}\ \left(\mathcal{H}^{N-1}|_{\Gamma_{0}}\right)\leq \sigma^{-1} \mathcal{H}^{N-1}|_{\Gamma_{t}}
\end{equation*}
for all $t\in (0,r_{0})$, where $\Phi^{t}_{*}\left(\mathcal{H}^{N-1}|_{\Gamma_{0}}\right)$ is the push-forward measure of  $\mathcal{H}^{N-1}$ restricted to $\Gamma_{0}$. Thus, for all $f\in W^{1,1}(\Omega)$ and $t\in(0,r_{0})$, it holds that
\begin{equation}\label{eq:1.3}
\sigma\int_{\Gamma_{t}}|f(x)|d\mathcal{H}^{N-1}\leq\int_{\Gamma_{0}}|f(\eta(x,t))|d\mathcal{H}^{N-1}\leq \sigma^{-1}\int_{\Gamma_{t}}|f(x)|d\mathcal{H}^{N-1}.
\end{equation}

In the following, we prepare a trace inequality which will be used later.

\begin{lemma}\label{lemma-trace}
  Let $x_{0}\in\partial\Omega$, $r\in(0,r_{0})$ and $\zeta\in
  C_{0}^{1}(B(x_{0},r))$. Assume that there exist a smooth unit vector field $\nu(x)$ and positive
  constants $\delta$ and $r$ such that $\nu(x)\cdot n(x)\geq\delta$ and $\nu(x)\cdot\nabla
  m(x)\leq 0 $ for all $x\in\Omega\cap B(x_{0},r)$. Then for any given $\epsilon>0$, there
  exists a constant $C(\epsilon)$, which is independent of $\alpha\geq0$, such
  that
  \begin{equation}\label{trace}
    \int_{\partial\Omega\cap B(x_{0},r)}u^{2}\zeta^{2}e^{2\alpha
        m}d\mathcal{H}^{N-1}\leq
      \epsilon\int_{\Omega\cap B(x_{0},r)}|\nabla u|^{2}\zeta^{2}e^{2\alpha
        m}dx+C(\epsilon)\int_{\Omega\cap B(x_{0},r)}u^{2}e^{2\alpha
        m}dx
  \end{equation}
for all $u\in W^{1,2}(\Omega\cap B(x_{0},r))$.
\end{lemma}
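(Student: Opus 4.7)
The plan is to prove this trace inequality by a divergence-theorem argument using the auxiliary vector field $\nu$, exploiting the sign condition $\nu\cdot\nabla m\le 0$ to control the otherwise $\alpha$-dependent term arising from differentiating $e^{2\alpha m}$. The weight $e^{2\alpha m}$ is the essential novelty compared to classical trace inequalities; everything else is a Young-inequality manipulation.

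First, I would consider the vector field $F = \nu\, u^{2}\zeta^{2}e^{2\alpha m}$ on $\Omega\cap B(x_{0},r)$ and apply the divergence theorem. Since $\zeta\in C_{0}^{1}(B(x_{0},r))$, the boundary integral over $\partial B(x_{0},r)\cap\Omega$ vanishes, so only the piece on $\partial\Omega\cap B(x_{0},r)$ survives, and the hypothesis $\nu\cdot n\ge\delta$ there gives
\begin{equation*}
\delta\int_{\partial\Omega\cap B(x_{0},r)}u^{2}\zeta^{2}e^{2\alpha m}\,d\mathcal{H}^{N-1}
\le \int_{\Omega\cap B(x_{0},r)}\mathrm{div}\bigl(\nu\, u^{2}\zeta^{2}e^{2\alpha m}\bigr)\,dx.
\end{equation*}

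Next I would expand the divergence on the right-hand side into four terms:
\begin{equation*}
\mathrm{div}(\nu)\,u^{2}\zeta^{2}e^{2\alpha m}\ +\ 2\zeta^{2}e^{2\alpha m}\,u\,\nu\cdot\nabla u\ +\ 2u^{2}\zeta e^{2\alpha m}\,\nu\cdot\nabla\zeta\ +\ 2\alpha\, u^{2}\zeta^{2}e^{2\alpha m}\,\nu\cdot\nabla m.
\end{equation*}
The crucial observation, and the only place the geometric hypothesis on $m$ is used, is that the last term is nonpositive because $\nu\cdot\nabla m\le 0$ in $\Omega\cap B(x_{0},r)$; hence it can simply be discarded, eliminating the dangerous $\alpha$-dependence. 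The first and third terms are controlled directly by $C\int_{\Omega\cap B(x_{0},r)}u^{2}e^{2\alpha m}\,dx$ with constants depending only on $\|\mathrm{div}\,\nu\|_{\infty}$, $\|\nu\|_{\infty}$, and $\|\nabla\zeta\|_{\infty}$. For the second term I would apply Young's inequality
\begin{equation*}
2\,\zeta^{2}e^{2\alpha m}\,|u\,\nu\cdot\nabla u|\ \le\ \epsilon\,|\nabla u|^{2}\zeta^{2}e^{2\alpha m}\ +\ \tfrac{1}{\epsilon}\,|\nu|^{2}u^{2}\zeta^{2}e^{2\alpha m},
\end{equation*}
after which rescaling $\epsilon$ by $\delta$ and collecting the $u^{2}e^{2\alpha m}$ terms into a single constant $C(\epsilon)$ yields exactly the claimed inequality.

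I do not anticipate a genuine obstacle in this argument; the whole proof is essentially the standard divergence identity for trace inequalities, and the sole subtlety is verifying that the $\alpha$-dependent term $2\alpha u^{2}\zeta^{2}e^{2\alpha m}\nu\cdot\nabla m$ has a favorable sign and can be discarded. Once this is noted, all remaining constants are manifestly independent of $\alpha$, so the inequality automatically holds uniformly in $\alpha\ge 0$, as required for applications elsewhere in the paper. A minor technical point is ensuring the divergence theorem applies at the $W^{1,2}$ regularity level, which is handled by a standard density approximation.
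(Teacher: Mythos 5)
Your proof is correct, and it reaches the same estimate via a different mechanism than the paper. The paper integrates along the flow $\eta(x,t)$ of the vector field $-\nu/(\nu\cdot n)$ starting from $\partial\Omega$: it writes the boundary integral as $\int_{\partial\Omega}\int_0^r -\tfrac{d}{dt}[\,u^2\zeta^2 e^{2\alpha m}(\eta(x,t))\,]\,dt\,d\mathcal{H}^{N-1}$, expands the $t$-derivative, discards the $\alpha\,\nu\cdot\nabla m$ term by sign, and then uses the push-forward comparability $\sigma\,\mathcal{H}^{N-1}|_{\Gamma_t}\le\Phi^t_*(\mathcal{H}^{N-1}|_{\Gamma_0})\le\sigma^{-1}\mathcal{H}^{N-1}|_{\Gamma_t}$ plus a coarea identity to convert the iterated integral back to a bulk integral. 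You instead apply the divergence theorem directly to $F=\nu\,u^2\zeta^2 e^{2\alpha m}$ on $\Omega\cap B(x_0,r)$, which produces the very same four terms without any flow or coarea machinery, and the sign hypothesis $\nu\cdot\nabla m\le 0$ again kills the $\alpha$-term; the remaining terms are handled by Young exactly as in the paper. The key idea — the favorable sign of $2\alpha\,u^2\zeta^2 e^{2\alpha m}\,\nu\cdot\nabla m$ — is the same, but your Eulerian route is shorter, avoids constructing $\eta$ and the pushforward bound, and in fact does not need $r<r_0$ at all; you also correctly note that the $W^{1,2}$-regularity issue is resolved by density. One small bookkeeping point: $\nu\cdot n\ge\delta$ is hypothesized only on the open set $\Omega\cap B(x_0,r)$, so you should note that it extends to $\partial\Omega\cap B(x_0,r)$ by continuity of $\nu$ and of the extended normal $n(x)=-\nabla d(x)$, which is how the boundary term on the divergence-theorem side is bounded below by $\delta$ times the desired trace integral.
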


\begin{proof} It suffices to prove \eqref{trace} for any smooth function $u$.

For any given $u\in  C^{\infty}(\overline{\Omega\cap B(x_{0},r)})$, in view of \eqref{eq:1.3}, we have
  \begin{equation*}
    \begin{aligned}
      & \int_{\partial\Omega\cap B(x_{0},r)}u^{2}(x)\zeta^{2}(x)e^{2\alpha
        m(x)}d\mathcal{H}^{N-1}\\
      & =\int_{\partial\Omega\cap B(x_{0},r)}\int_{0}^{r}-\frac{d}{dt}\left[u^{2}(\eta(x,t))\zeta^{2}(\eta(x,t))e^{2\alpha
          m(\eta(x,t))}\right]dt d\mathcal{H}^{N-1}\\
       & =\int_{\partial\Omega\cap B(x_{0},r)}\int_{0}^{r}-\frac{d\eta(x,t)}{dt}\cdot\left(2u\nabla u\zeta^{2}e^{2\alpha m}+2u^{2}\zeta\nabla \zeta
      e^{2\alpha m}+2\alpha\nabla m u^{2}\zeta^{2}e^{2\alpha
        m}\right)(\eta(x,t))dtd\mathcal{H}^{N-1}\\
    &=\int_{\partial\Omega\cap B(x_{0},r)}\int_{0}^{r}
    \frac{1}{\nu(\eta(x,t))\cdot n(\eta(x,t))}\nu(\eta(x,t))\cdot\left(2u\nabla u\zeta  ^{2}e^{2\alpha m}+2u^{2}\zeta\nabla \zeta
      e^{2\alpha m}\right)(\eta(x,t))dtd\mathcal{H}^{N-1}\\
    &\ \ \ +2\alpha\int_{\partial\Omega\cap B(x_{0},r)}\int_{0}^{r}
    \frac{\left(u^{2}\zeta  ^{2}e^{2\alpha
        m}\right)(\eta(x,t))}{\nu(\eta(x,t))\cdot n(\eta(x,t))}\nu(\eta(x,t))\cdot\nabla m (\eta(x,t))dtd\mathcal{H}^{N-1}\\
          &\leq\delta^{-1}\int_{\partial\Omega\cap B(x_{0},r)}\int_{0}^{r}
     \left| \nu(x)\cdot\left(2u\nabla u\zeta  ^{2}e^{2\alpha m}+2u^{2}\zeta\nabla \zeta
        e^{2\alpha m}\right)(\eta(x,t))\right|dtd\mathcal{H}^{N-1}\\
      &\leq\delta^{-1}\int_{0}^{r}\int_{\partial\Omega\cap B(x_{0},r)} \left(2|u||\nabla u|\zeta  ^{2}e^{2\alpha m}+2u^{2}|\zeta  ||\nabla \zeta|
      e^{2\alpha m}\right)(\eta(x,t))d\mathcal{H}^{N-1}dt\\
    &\leq \delta^{-1}\sigma^{-1}\int_{0}^{r}\int_{\Gamma_{t}\cap B(x_{0},r)}\left( 2|u||\nabla u|\zeta^{2}e^{2\alpha m}+2u^{2}|\zeta||\nabla \zeta|
    e^{2\alpha m}\right)(x)d\mathcal{H}^{N-1}dt\\
     &=\delta^{-1} \sigma^{-1}\int_{\Omega\cap B(x_{0},r)} 2|u||\nabla u|\zeta^{2}e^{2\alpha m}+2u^{2}|\zeta||\nabla \zeta|
      e^{2\alpha m}dx\\
      &\leq \epsilon\int_{\Omega\cap B(x_{0},r)}|\nabla u|^{2}\zeta^{2}e^{2\alpha
        m}dx+\frac{1}{\epsilon\delta \sigma}\int_{\Omega\cap B(x_{0},r)}u^{2}\zeta^{2}e^{2\alpha
        m}dx\\
      &\ \ \ +2\delta^{-1}\sigma^{-1}\max_{\overline{\Omega\cap B(x_{0},r)}}|\zeta||\nabla\zeta|\int_{\Omega\cap B(x_{0},r)}u^{2}e^{2\alpha
        m}dx\\
      &\leq \epsilon\int_{\Omega\cap B(x_{0},r)}|\nabla u|^{2}\zeta^{2}e^{2\alpha
        m}dx+C(\epsilon)\int_{\Omega\cap B(x_{0},r)}u^{2}e^{2\alpha m}dx.
    \end{aligned}
  \end{equation*}
This completes the proof.
\end{proof}

With the help of Lemma \ref{lemma-trace}, we are able to obtain the following boundedness of $\{\lambda_1(\alpha)\}$ from below as $\alpha\to\infty$.

\begin{lemma}\label{lowerbound} Assume that there exist a smooth unit vector field $\nu(x)$ and positive
  constants $\delta$ and $r_{0}$ such that $\nu(x)\cdot n(x)\geq\delta$ and $\nu(x)\cdot\nabla
  m(x)\leq 0 $ for all $x\in\Omega_{r_{0}}^{-}$. Then we have
  \[
    \liminf_{\alpha\to\infty}\lambda_1(\alpha)>-\infty.
  \]
\end{lemma}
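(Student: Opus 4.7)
The plan is to use the trace inequality from Lemma~\ref{lemma-trace} to tame the boundary contribution in the variational characterization of $\lambda_1(\alpha)$, thereby preventing the negative part of $\beta$ from driving $\lambda_1(\alpha)$ to $-\infty$. I would work with the Rayleigh quotient in the $\varphi$-formulation (the first equality in \eqref{eq:1.2}):
\[
\lambda_1(\alpha)=\inf_{\varphi\in H^1(\Omega),\,\varphi\neq 0}\frac{\int_\Omega e^{2\alpha m}(|\nabla\varphi|^2+V\varphi^2)dx+\int_{\partial\Omega}\beta e^{2\alpha m}\varphi^2 d\mathcal H^{N-1}}{\int_\Omega e^{2\alpha m}\varphi^2 dx}.
\]
The only dangerous term is the one over $\partial\Omega^-=\{\beta<0\}$, and the task is to absorb it uniformly in $\alpha$ using the other ingredients.

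Second, since $\overline{\partial\Omega^-}$ is compact and $\beta$ is continuous, I would cover $\overline{\partial\Omega^-}$ by finitely many balls $B(x_i,r_i/2)$, $i=1,\dots,K$, with centers $x_i\in\overline{\partial\Omega^-}$ and radii $r_i\in(0,r_0)$ chosen small enough that each $\Omega\cap B(x_i,r_i)\subset\Omega_{r_0}^-$. On every such patch the vector field $\nu$ supplied by \textbf{(A2)} satisfies $\nu\cdot n\geq\delta$ and $\nu\cdot\nabla m\leq 0$, so the hypotheses of Lemma~\ref{lemma-trace} are met. Pick cutoffs $\zeta_i\in C_0^1(B(x_i,r_i))$ with $\zeta_i\equiv 1$ on $B(x_i,r_i/2)$, apply Lemma~\ref{lemma-trace} with $u=\varphi$ on each patch, and sum over $i$ to obtain, for every $\varphi\in H^1(\Omega)$ and every $\epsilon>0$,
\[
\int_{\partial\Omega^-}\varphi^2 e^{2\alpha m} d\mathcal H^{N-1}\leq K\epsilon\int_\Omega|\nabla\varphi|^2 e^{2\alpha m}dx+KC(\epsilon)\int_\Omega\varphi^2 e^{2\alpha m}dx,
\]
with constants independent of $\alpha$.

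Third, set $M:=\max_{\partial\Omega}|\beta|$. Since $\beta\geq -M\chi_{\partial\Omega^-}$, inserting the above trace estimate into the Rayleigh quotient yields, for every admissible $\varphi$,
\[
\lambda_1(\alpha)\geq\frac{(1-MK\epsilon)\int_\Omega|\nabla\varphi|^2 e^{2\alpha m}dx+\int_\Omega V\varphi^2 e^{2\alpha m}dx-MKC(\epsilon)\int_\Omega\varphi^2 e^{2\alpha m}dx}{\int_\Omega e^{2\alpha m}\varphi^2 dx}.
\]
Choosing $\epsilon$ with $MK\epsilon\leq 1/2$ and discarding the nonnegative gradient term, one arrives at
\[
\lambda_1(\alpha)\geq -\max_{\overline\Omega}|V|-MKC(\epsilon),
\]
an $\alpha$-independent lower bound, which is exactly the conclusion of the lemma.

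The argument is essentially mechanical once Lemma~\ref{lemma-trace} is in place; the real content is the lemma itself, whose proof uses \textbf{(A2)} to integrate along integral curves of $\nu$ from $\partial\Omega^-$ inward without amplification of the weight $e^{2\alpha m}$ near the boundary. The only point that needs care is arranging the covering balls to lie inside $\Omega_{r_0}^-$ so that the hypotheses of Lemma~\ref{lemma-trace} apply, and this follows at once from the continuity of $\beta$ and the compactness of $\overline{\partial\Omega^-}$. I do not anticipate a serious obstacle beyond bookkeeping.
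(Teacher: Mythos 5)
Your proposal is correct and follows essentially the same route as the paper: cover $\partial\Omega^-$ (or its closure) with finitely many small balls lying inside $\Omega_{r_0}^-$, deploy cutoff functions, apply the trace inequality of Lemma~\ref{lemma-trace} patch by patch with $u=\varphi$, sum, and then absorb the boundary term into the gradient term of the Rayleigh quotient by taking $\epsilon$ small. The only cosmetic difference is the cutoff bookkeeping: the paper arranges a partition-of-unity-type family with $\sum_k\zeta_k^2\le 1$ and $\sum_k\zeta_k^2=1$ on $\partial\Omega^-$, so the gradient coefficient in the summed trace estimate stays $\epsilon$ rather than $K\epsilon$; your cutoffs ($\zeta_i\equiv 1$ on inner balls) give the factor $K$, which you correctly absorb by shrinking $\epsilon$, so both yield the same $\alpha$-uniform lower bound.
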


\begin{proof}
  Let $r\in (0,\delta)$. We choose a set
  $\{x_k\in\partial\Omega^{-}:\ 1\leq k\leq K\}$ consisting of finitely many points and a sequence of smooth functions
  $\{\zeta_{k}:\ 1\leq k\leq K\}$ such that
$$\mbox{
  $\partial\Omega^{-}\subset\bigcup_{k=1}^{K}B(x_{k},r)$,\ \ \
  ${\zeta_{k}}=0$\ \, in $\mathbb{R}^{N}\setminus B(x_{k},r)$,\ \ \
  $\sum_{k=1}^{K}\zeta_{k}^{2}\leq 1$}
  $$ and
$$\mbox{$\sum_{k=1}^{K}\zeta_{k}^{2}(x)=1$\ \ \ for all\ $x\in \partial\Omega^{-}$.}
$$
  For any $\epsilon>0$, by Lemma \ref{lemma-trace}, there exist
  constants $C_{k}(\epsilon)$ such that, for all $\varphi\in
  W^{1,2}(\Omega)$,
\begin{equation*}
    \int_{\partial\Omega\cap B(x_{k},r)}\varphi^{2}\zeta_{k}^{2}e^{2\alpha
        m}d\mathcal{H}^{N-1}\leq
      \epsilon\int_{\Omega\cap B(x_{k},r)}|\nabla \varphi|^{2}\zeta_{k}^{2}e^{2\alpha
        m}dx+C_{k}(\epsilon)\int_{\Omega\cap B(x_{k},r)}\varphi^{2}e^{2\alpha
        m}dx.
  \end{equation*}
Thus, it holds that
 \begin{equation}
   \begin{aligned}
\int_{\partial\Omega^{-}}\varphi^{2}e^{2\alpha m}d\mathcal{H}^{N-1}&\leq\int_{\partial\Omega^{-}}\sum_{k=1}^{K}\varphi^{2}\zeta_{k}^{2}e^{2\alpha m}d\mathcal{H}^{N-1}\\
&\leq \sum_{k=1}^{K}\left(\epsilon\int_{\Omega}|\nabla\varphi|^{2}\zeta_{k}^{2}e^{2\alpha
  m}dx+C_{k}(\epsilon)\int_{\Omega}\varphi^{2}e^{2\alpha m}dx\right)\\
&\leq \epsilon\int_{\Omega}|\nabla\varphi|^{2} e^{2\alpha
  m}dx+C(\epsilon)\int_{\Omega}\varphi^{2}e^{2\alpha m}dx
\end{aligned}
\label{inq-cc}
\end{equation}
for some constant $C(\epsilon)$.

Let $\beta^{*}=\sup_{\partial\Omega^{-}}|\beta(x)|$ and choose
a small $\epsilon$ such that $\epsilon \beta^{*}\leq 1$. Then for all
$\varphi\in W^{1,2}(\Omega)$, it holds that
\begin{equation*}
  \begin{aligned}
    &\frac{\int_{\Omega}e^{2\alpha
        m}(|\nabla\varphi|^{2}+V\varphi^{2})dx+\int_{\partial\Omega}\beta
      e^{2\alpha m}\varphi^{2}d\mathcal{H}^{N-1}}{\int_{\Omega}e^{2\alpha
        m}\varphi^{2}dx}\\
    & \geq \frac{\int_{\Omega}e^{2\alpha
        m}(|\nabla\varphi|^{2}+V\varphi^{2})dx-\beta^{*}\left(\epsilon\int_{\Omega}|\nabla\varphi|^{2}e^{2\alpha
          m}dx+C(\epsilon)\int_{\Omega}e^{2\alpha
          m}\varphi^{2}dx\right)}{\int_{\Omega}e^{2\alpha
        m}\varphi^{2}}\\
    &\geq \frac{\int_{\Omega}V e^{2\alpha
        m}\varphi^{2}-\beta^{*}C(\epsilon)e^{2\alpha
        m}\varphi^{2}dx}{\int_{\Omega}e^{2\alpha
        m}\varphi^{2}dx}\\
    &\geq \min_{\overline{\Omega}}V(x)-\beta^{*}C(\epsilon).
  \end{aligned}
\end{equation*}
Therefore, we have
\[
  \lambda_1(\alpha) =\inf_{\varphi\in
    W^{1,2}(\Omega)}\frac{\int_{\Omega}e^{2\alpha
      m}(|\nabla\varphi|^{2}+V\varphi^{2})dx+\int_{\partial\Omega}\beta
    e^{2\alpha m}\varphi^{2}d\mathcal{H}^{N-1}}{\int_{\Omega}e^{2\alpha
      m}\varphi^{2}dx}\geq
  \min_{\overline{\Omega}}V(x)-\beta^{*}C(\epsilon)
\]
for all $\alpha>0$, and consequently,
\[
  \liminf_{\alpha\to\infty} \lambda_1(\alpha)\geq \min_{\overline{\Omega}}V(x)-\beta^{*}C(\epsilon).
\]
This completes the proof.
\end{proof}

Now we are ready to prove Theorem \ref{general}.

\begin{proof}[Proof of Theorem \ref{general}] The assertion (i) follows directly from Lemma \ref{unbounded}.

 In the sequel, we will prove the assertion (ii) under conditions (A1) and (A2). By Lemma \ref{lowerbound}, we have
  \[
 \liminf_{\alpha\to\infty}\lambda_1(\alpha)>-\infty.
\]
Since $\Sigma_{1}\cup\Sigma_{2}\neq\emptyset$, due to Lemma \ref{upperbound}, we see that
\begin{equation}\label{ineq-z1}
  \limsup_{\alpha\to\infty} \lambda_1(\alpha)\leq \min_{\Sigma_{1}\cup\Sigma_{2}}V(x).
\end{equation}
It remains to show that
\[
  \liminf_{\alpha\to\infty} \lambda_1(\alpha)\geq \min_{\Sigma_{1}\cup\Sigma_{2}}V(x).
\]

As in Subsection 2.2, let us  take $\varphi_{\alpha}$ to be the eigenfunctions corresponding to the
  principal eigenvalues $\lambda_1(\alpha)$ and set $w_{\alpha}=e^{\alpha
    m}\varphi_{\alpha}$ with $\int_{\Omega}w_{\alpha}^{2}dx=1$. We may assume that
  $w_{\alpha}^{2}$ weakly converges to a Radon measure $\mu$ in the sense of \eqref{weakcon} with
  $\mu(\overline{\Omega})=1$.
Noticing that, by \eqref{ineq-z1}, for all large $\alpha$,
\begin{equation}\label{ineq-z3}
\lambda_1(\alpha)=\int_{\Omega}(|\nabla\varphi_{\alpha}|^{2}+V\varphi_{\alpha}^{2})e^{2\alpha
m}dx+\int_{\partial\Omega}\beta\varphi_{\alpha}^{2}e^{2\alpha m}d\mathcal{H}^{N-1}\leq \min_{\Sigma_{1}\cup\Sigma_{2}}V(x)+1.
\end{equation}
On the other hand, similarly as to derive \eqref{inq-cc}, one can use Lemma \ref{lemma-trace} to claim
\begin{equation}\label{ineq-z4}
\int_{\partial\Omega}\beta^-\varphi_{\alpha}^{2}e^{2\alpha m}d\mathcal{H}^{N-1}\leq\frac{1}{2}\int_{\Omega}|\nabla\varphi_{\alpha}|^{2} e^{2\alpha
  m}dx+C
\end{equation}
for some constant $C(\epsilon)$ independent of $\alpha$. Hence, by \eqref{ineq-z3} and \eqref{ineq-z4}, one can find a constant $M$, independent of all $\alpha\geq0$, such that
  \[
\int_{\Omega}|\nabla\varphi_{\alpha}|^{2}e^{2\alpha
m}dx+\int_{\partial\Omega}\beta^{+}\varphi_{\alpha}^{2}e^{2\alpha m}d\mathcal{H}^{N-1}\leq M,
\]
leading to
\begin{equation}\label{ineq4}
\int_{\Omega}|\nabla w_{\alpha}-\alpha
w_{\alpha}\nabla m|^{2}dx\leq M.
\end{equation}
As before, it follows from \cite{CL2008} that the inequality \eqref{ineq4} implies that
$\mu(\Omega_{i})=0$ for $i=1,\cdots,6$.

Furthermore, in view of \eqref{ineq4}, we can use the same argument as in
the proof of Theorem \ref{nonnegative} to claim that $\mu(\Sigma_{4})=0$.
As a result, we have
\begin{equation}\label{set}
\text{supp}(\mu)\subset\Sigma_{1}\cup\Sigma_{2}.
\end{equation}

Let  $\delta>0$ such that $\Omega_{\delta}^{-}\cap(\Sigma_{1}\cup\Sigma_{2})=\emptyset$.
We choose  $r\in (0,\delta/2)$ sufficiently small, the set
  $\{x_k\in\partial\Omega^{-}:\ 1\leq k\leq K\}$ and a sequence of smooth functions
  $\{\zeta_{k}:\ 1\leq k\leq K\}$ such that
$$
 \mbox{ $\partial\Omega^{-}\subset\cup_{k=1}^{K}B(x_{k},r)$,\ \
  ${\zeta_{k}}=0$\ \, in $\mathbb{R}^{N}\setminus B(x_{k},r)$,\ \
  $\sum_{k=1}^{K}\zeta_{k}^{2}\leq 1$}
  $$
and
$$
\mbox{$\sum_{k=1}^{K}\zeta_{k}^{2}(x)=1$\ \, for all\, $x\in \partial\Omega^{-}$.}
$$
Let $\eta\in C_{0}^{\infty}(\mathbb{R}^{N})$ be a nonnegative function such that
$$
\mbox{$\eta\equiv 0$\ \ in\
$\Omega\setminus\Omega_{\delta}^{-}$\ \ and\ \ $\eta(x)\equiv 1$ in $\Omega_{r}^{-}$.}
$$
For any $\epsilon>0$, by Lemma \ref{lemma-trace}, there exist
  constants $C_{k}(\epsilon)$ such that, for all $\alpha\geq0$,
  \begin{equation*}
    \begin{aligned}
    \int_{\partial\Omega\cap B(x_{k},r)}\varphi_{\alpha}^{2}\zeta_{k}^{2}e^{2\alpha
        m}d\mathcal{H}^{N-1}&\leq
      \epsilon\int_{\Omega\cap B(x_{k},r)}|\nabla \varphi_{\alpha}|^{2}\zeta_{k}^{2}e^{2\alpha
        m}dx+C_{k}(\epsilon)\int_{\Omega\cap B(x_{k},r)}\varphi_{\alpha}^{2}e^{2\alpha
        m}dx\\
      &\leq
      \epsilon\int_{\Omega\cap B(x_{k},r)}|\nabla \varphi_{\alpha}|^{2}\zeta_{k}^{2}e^{2\alpha
        m}dx+C_{k}(\epsilon)\int_{\Omega}\eta\varphi_{\alpha}^{2}e^{2\alpha
        m}dx.
      \end{aligned}
  \end{equation*}
Thus, we obtain
 \begin{equation}\label{ineq5}
   \begin{aligned}
\int_{\partial\Omega^{-}}\varphi_{\alpha}^{2}e^{2\alpha m}d\mathcal{H}^{N-1}&\leq\int_{\partial\Omega^{-}}\sum_{k=1}^{K}\varphi_{\alpha}^{2}\zeta_{k}^{2}e^{2\alpha m}d\mathcal{H}^{N-1}\\
&\leq \sum_{k=1}^{K}\left(\epsilon\int_{\Omega}|\nabla\varphi_{\alpha}|^{2}\zeta_{k}^{2}e^{2\alpha
  m}dx+C_{k}(\epsilon)\int_{\Omega}\eta\varphi_{\alpha}^{2}e^{2\alpha m}dx\right)\\
&\leq \epsilon\int_{\Omega}|\nabla\varphi_{\alpha}|^{2} e^{2\alpha
  m}dx+C(\epsilon)\int_{\Omega}\eta\varphi_{\alpha}^{2}e^{2\alpha m}dx
\end{aligned}
\end{equation}
for some constant $C(\epsilon)$ which is independent of $\alpha$.

By means of \eqref{ineq5}, we have
\begin{equation}\label{ineq-eng}
  \begin{aligned}
\liminf_{\alpha\to\infty}\lambda_1(\alpha)&=\lim_{\alpha\to\infty}\int_{\Omega}(e^{2\alpha
  m}|\nabla\varphi_{\alpha}|^{2}+Ve^{2\alpha
  m}\varphi_{\alpha}^{2})dx+\int_{\partial\Omega}\beta e^{2\alpha
  m}\varphi_{\alpha}^{2}d\mathcal{H}^{N-1}\\
&\geq\lim_{\alpha\to\infty}\int_{\Omega}(e^{2\alpha
  m}|\nabla\varphi_{\alpha}|^{2}+Ve^{2\alpha
  m}\varphi_{\alpha}^{2})dx+\int_{\partial\Omega^{-}}\beta e^{2\alpha
  m}\varphi_{\alpha}^{2}d\mathcal{H}^{N-1}\\
&\geq \lim_{\alpha\to\infty}\left(1-\beta^{*}\epsilon\right)\int_{\Omega}|\nabla\varphi_{\alpha}|^{2}e^{2\alpha
  m}dx+\int_{\Omega}V e^{2\alpha  m}\varphi_{\alpha}^{2}dx-\beta^{*}C(\epsilon)\int_{\Omega} \eta e^{2\alpha
  m}\varphi_{\alpha}^{2}dx,
\end{aligned}
\end{equation}
where $\beta^{*}=\sup_{x\in\partial\Omega^{-}}\beta^{-}(x)$.
We choose $\epsilon$ sufficiently small such that
$1-\beta^{*}\epsilon\geq 0$. Note that
$$
\mathrm{supp}(\eta)\cap\mathrm{supp}
  (\mathrm{\mu})\subset\mathrm{supp}({\eta})\cap(\Sigma_{1}\cup\Sigma_{2})=\emptyset
$$
due to \eqref{set} and the choice of $\eta$. It then follows from the weak convergence of $\mu_{\alpha}$ (see \eqref{weakcon})  that
\[
\lim_{\alpha\to\infty}\int_{\Omega}\eta\varphi_{\alpha}^{2}e^{2\alpha m}dx=
\lim_{\alpha\to\infty}\int_{\mathbb{R}^{N}}\eta
d\mu_{\alpha}=\int_{\mathbb{R}^{N}}\eta d \mu=0.
\]
Therefore, because of \eqref{ineq-eng}, we can deduce
\[
\liminf_{\alpha\to\infty}\lambda_1(\alpha)
\geq\lim_{\alpha\to\infty}\int_{\Omega}V\varphi_{\alpha}^{2}e^{2\alpha m}dx\geq
\min_{x\in \Sigma_{1}\cup\Sigma_{2}}V(x).
  \]
This verifies the assertion (ii).

We finally prove (iii) under the conditions (A1), (A2) and
$\Sigma_{1}\cup\Sigma_{2}\cup\Sigma_{3}=\emptyset$.
Arguing indirectly, we suppose that there exists a sequence of $\alpha$, still denoted by itself, such that
\[
\lim_{\alpha\to\infty}\lambda_1(\alpha)<\infty,
\]
which implies \eqref{ineq4} holds. Then, the same analysis as in the proof of the assertion (ii) yields $\mu(\Omega_{i})=0$ for $i=1,\cdots,6$ and $\mu(\Sigma_{4})=0$. Thus, $\text{supp}(\mu)\subset\Sigma_{1}\cup\Sigma_{2}$, which, in turn, gives $\mu(\overline{\Omega})=0$
thanks to $\Sigma_{2}\cup\Sigma_{3}=\emptyset$. This contradicts with $\mu(\overline{\Omega})=1$.
Consequently, we must have $\lim_{\alpha\to\infty}\lambda_1(\alpha)=\infty$. The proof is now completed.
\end{proof}
\subsection{Proof of Theorem \ref{degenerate}}
This subsection is devoted to the proof of Theorem \ref{degenerate}.

\begin{proof}[Proof of Theorem \ref{degenerate}]
Theorem \ref{degenerate}(i) follows directly from Lemma \ref{unbounded}.

Denote $\Omega_{i}\ (i=1,3)$ as before, and let $\{\alpha_{k}\}_{k\in\mathbb{N}}$ be a sequence such that
$\lim_{k\to\infty}\alpha_{k}=\infty$ and $\mu_{\alpha_{k}}$ weakly
converges to a Radon measure $\mu$ with $\mathrm{supp}(\mu)\subset
\overline{\Omega}$ and $\mu(\overline{\Omega})=1$. It follows from the
same argument as in \cite{CL2008} that $\mu(\Omega_{i})=0$ for
$i=1,3$.

Define
$$
\tilde{\Omega}_{2}=\{x\in \Omega\setminus\Sigma:\ \nabla m(x)=0\},
$$
$$
\tilde{\Omega}_{4}=\{x\in\partial\Omega\setminus\Sigma:\ \nabla m(x)=0\ \text{ or }\
|\nabla m(x)|=-\frac{\partial}{\partial n} m(x)>0\}
$$
and
$$
 \Sigma_{4}=\{x\in\Sigma\cap\partial\Omega:\ \ \beta(x)>0\}.
 $$

If $\Sigma_{3}=\emptyset$ and $\Sigma_{1}\cup\Sigma_{2}\neq\emptyset$,
similarly to the proof of Theorem \ref{general}, we have
 \begin{equation}\label{ineq-z6}
\int_{\Omega}|\nabla w_{\alpha_{k}}-\alpha w_{\alpha_{k}}\nabla m|^{2}dx<C
 \end{equation}
for some constant $C$, independent of $k$. Then Lemma
\ref{lemma-degenerate} below tells us that $\mu(\tilde{\Omega}_{2})=0$
and $\mu(\tilde{\Omega}_{4})=0$, which implies
$\mathrm{supp}(\mu)\subset \cup_{i=1}^{4}\Sigma_{i}$. Now, the same
arguments as in Theorem  \ref{nonnegative} and Theorem \ref{general} can be used to deduce
Theorem \ref{degenerate}(ii) and (iii).
\end{proof}

\begin{lemma}\label{lemma-degenerate}
  Let $x_{0}\in{\overline{\Omega}}\setminus\Sigma$ be a critical point of $m$. Assume that {\rm (A3)} and \eqref{ineq-z6} hold. Then
we have $\mu(\{x_{0}\})=0$.
\end{lemma}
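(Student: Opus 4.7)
The plan is to argue by contradiction and extend the flow-based energy lower bound used in Case 2 of the proof of Theorem \ref{nonnegative} to each structural alternative allowed by \textbf{(A3)}. Assume $\mu(\{x_0\})>0$. Since $w_{\alpha_k}^2\rightharpoonup \mu$ weakly, for all large $k$ we have $\int_{B(x_0,r)} w_{\alpha_k}^2\,dx\ge \mu(\{x_0\})/2$ for any fixed small $r>0$. My goal is to derive $\int_\Omega |\nabla w_{\alpha_k}-\alpha_k w_{\alpha_k}\nabla m|^2\,dx\to\infty$, contradicting \eqref{ineq-z6}.

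The key template: whenever a smooth unit vector field $\tau$ is available on a region $U\subset\Omega$ satisfying $\tau\cdot\nabla m\le 0$ throughout $U$, one has pointwise
\[
|\nabla w_\alpha-\alpha w_\alpha\nabla m|^2\ge |\tau\cdot\nabla w_\alpha-\alpha w_\alpha\,\tau\cdot\nabla m|^2\chi_{\{\tau\cdot\nabla w_\alpha\ge 0\}}\ge [(\tau\cdot\nabla w_\alpha)^+]^2.
\]
Foliating $U$ by the flow of $\tau$ into level surfaces $\{\Gamma_t\}_{t\in[0,\delta]}$ with uniform push-forward constants (as in \eqref{pushforward}), slicing, applying Cauchy--Schwarz on each flow line, and then using the identity $w_\alpha(\eta(x,\delta))-w_\alpha(\eta(x,0))=\int_0^\delta \partial_t w_\alpha(\eta(x,t))\,dt$ on the set where $w_\alpha(\eta(x,\delta))\ge w_\alpha(\eta(x,0))$, one obtains a lower bound of the form
\[
\int_U|\nabla w_\alpha-\alpha w_\alpha\nabla m|^2\,dx\ge \frac{c_1}{\delta}\int_{\Gamma_\delta}w_\alpha^2\,d\mathcal{H}^{N-1}-\frac{c_2}{\delta}\int_{\Gamma_0}w_\alpha^2\,d\mathcal{H}^{N-1}.
\]
By the coarea-style formula, some slice $\Gamma_\delta$ near $x_0$ carries at least $c\mu(\{x_0\})/r$ of mass, so the first term is $\gtrsim \mu(\{x_0\})/(r\delta)$. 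If the second term is uniformly bounded, shrinking $r$ (and hence $\delta$) sends the lower bound to $\infty$.

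Next I case-split according to \textbf{(A3)}. For interior critical points: in case (i)(b), set $\tau=-\xi$ and foliate $B(x_0,r)$ by parallel layers, with $\Gamma_0\subset\subset\Omega$; the boundary trace $\int_{\Gamma_0}w_\alpha^2$ is uniformly controlled by $\int_\Omega w_\alpha^2=1$ via interior trace on a fixed hypersurface. In case (i)(a), the surface $\Gamma$ splits $B(x_0,r_0)$ into $D_1,D_2$; I apply the template separately with $\tau=-\xi$ on $D_1$ (where $\xi\cdot\nabla m\ge 0$) and $\tau=\xi$ on $D_2$, using $\Gamma$ as the initial layer $\Gamma_0$ in each side, whose trace is controlled as above. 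For boundary critical points: (ii)(a) is handled exactly as Case 2 of Theorem \ref{nonnegative} using $\xi$ (transverse to $\partial\Omega$, $\xi\cdot\nabla m\le 0$), with $\Gamma_0\subset\partial\Omega$ and its contribution controlled through the trace Lemma \ref{lemma-trace} combined with \eqref{ineq-z6}; (ii)(b) is similar but one flows along $\xi$ with $\xi\cdot n\le 0$, so the initial slice lies inside $\Omega$ and no trace estimate is needed; (ii)(c) combines the two halves as in (i)(a), with separate treatment of $D_1$ and $D_2$ in $\Omega\cap B(x_0,r_0)$, using Lemma \ref{lemma-trace} on the boundary portions and the interior-slice control on $\Gamma$.

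The main obstacle I expect is case (ii)(c): the geometry mixes an interior dividing surface $\Gamma$ with a curved boundary $\partial\Omega\cap B(x_0,r_0)$ crossing $\Gamma$ at $x_0$, and the vector field $\xi$ has different sign relations with $n$ on the two sides. Setting up the flow so that the layers $\Gamma_t$ fit inside $D_1$ (resp.\ $D_2$) without crossing $\partial\Omega$ uncontrollably, and simultaneously bounding the boundary slices through Lemma \ref{lemma-trace}, requires a careful tubular-neighborhood construction around $\Gamma$ together with the transversality bound $\xi\cdot\nu>\delta$. A secondary technical point is in case (i)(a), where $x_0\in\Gamma$: one must choose the slicing direction so that the slice carrying the bulk of the concentrated mass lies strictly inside $B(x_0,r_0)\cap D_i$ for at least one $i\in\{1,2\}$; this is done by picking, via pigeonhole on a covering of $B(x_0,r)$ by flow tubes, the component $D_i$ in which $\int_{D_i\cap B(x_0,r)}w_\alpha^2\ge \mu(\{x_0\})/4$. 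Once these two cases are handled, the remaining sub-cases follow the template with only minor notational changes, mirroring the proofs already carried out for $\Omega_i$, $1\le i\le 6$, and $\Sigma_4$.
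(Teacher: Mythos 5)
Your high-level plan is the right one and does match the paper's: argue by contradiction, flow along $\xi$ transversal to $\Gamma$, use $\xi\cdot\nabla m\ge 0$ on one side to insert $-\alpha_k w_{\alpha_k}\xi\cdot\nabla m$ for free, slice with the coarea formula, and apply Cauchy--Schwarz along flow lines to bound the variation of slice masses by the energy $\int_\Omega|\nabla w_{\alpha_k}-\alpha_k w_{\alpha_k}\nabla m|^2$. Where you depart from the paper --- and where a genuine gap appears --- is in how the ``far'' slice (your $\Gamma_0$, the paper's $\Gamma_{t_k}$) is controlled. For the interior cases (i)(a) and (i)(b) you assert that $\int_{\Gamma_0}w_\alpha^2\,d\mathcal{H}^{N-1}$ is ``uniformly controlled by $\int_\Omega w_\alpha^2=1$ via interior trace on a fixed hypersurface.'' This is false as stated: there is no bounded restriction operator from $L^2(\Omega)$ to $L^2(\Gamma_0)$, so the normalization $\int_\Omega w_\alpha^2=1$ alone does not bound a trace on a fixed interior hypersurface. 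Without this control your lower bound on the energy has no content, since the ``far'' term could a priori be as large as the ``near'' one.

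The key ingredient the paper uses, and that your proposal is missing, is the fact (already established via the Chen--Lou arguments) that $\mu(\Omega_1)=0$, i.e.\ $\mu$ vanishes on the set of non-critical interior points of $m$. Since the critical points are isolated, the outer collar $\cup_{t\in(r_1-\epsilon r_1,r_1)}\Gamma_t$ lies where $|\nabla m|>0$, so $\int_{\cup_t\Gamma_t}w_{\alpha_k}^2\,dx<\epsilon\,\mu(\{x_0\})$ for $k$ large; pigeonhole over this band of width $\epsilon r_1$ then produces a slice $\Gamma_{t_k}$ with mass $<\mu(\{x_0\})/r_1$. This $k$-dependent choice of the far slice, not a fixed trace estimate, is what makes the second term uniformly bounded. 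The near slice has mass $\ge\mu(\{x_0\})/(4\epsilon r_1)$, and the contradiction comes from letting $\epsilon\to 0$ with $r_1,\delta,\sigma$ fixed. The paper applies exactly the same mechanism in the boundary cases (ii)(a)--(c) too (choosing $\tilde\Gamma_{t_k}$ inside $\Omega$ where $|\nabla m|\neq 0$), so Lemma~\ref{lemma-trace} is not actually needed in this proof; your idea of invoking it in (ii)(a) would require re-checking that its hypotheses match (A3)(ii)(a) and would still not fix the interior cases or (ii)(b). To repair your argument, replace the trace claim by the $\mu(\Omega_1)=0$ pigeonhole choice of the far slice, and select both slices depending on $k$ rather than keeping $\Gamma_0$ fixed.
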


\begin{proof} \textbf{Case 1}. $x_0\in\Omega$. We assume that (a)-(i) of (A3) holds; the proof for case (b) is similar. Let $\Gamma,\,\nu,\,\xi,\,r_{0}, \,\delta,\,D_1,\,D_2$ be given as in (a)-(i) of (A3).
For $x\in B(x_0,r_0)$, we define the signed distance function
\begin{equation*}
\tilde d(x)=
\begin{cases}
\inf_{y\in\Gamma}|x-y|, &\text{ if }x\in D_1,\\
-\inf_{y\in\Gamma}|x-y|, &\text{ if }x\in D_2.
\end{cases}
\end{equation*}
We may assume that $r_0$ is sufficiently small such that $\tilde d(x)$ is $C^{1}$ and
$\xi(x)\cdot\nu(x)\geq\delta/2$ for all $x\in B(x_0,r_0)$, where $\tilde\nu(x)=\nabla\tilde d(x)$ and $\tilde\nu(x)=\nu(x)$ for $x\in\Gamma\cap B(x_0,r_0)$.
Let $\eta(x,t)$ be the solution of
\[
\frac{d}{dt}\eta(x,t)=\frac{\xi(\eta(x,t))}{\xi(\eta(x,t))\cdot\tilde \nu(\eta(x,t))}, \ \ \eta(x,0)=x.
\]
Then $d(\eta(x,t))=t$ for all $x\in\Gamma$ and $t\in(-r_0,r_0)$.
Given $r_1\in(0,r_0)$, let us denote
\[
\Gamma_t=\{\eta(x,t):\ \ x\in\Gamma\cap B(x_0,r_1)\}.
\]
We may further choose $r_1$ sufficiently small such that $\Gamma_t\subset B(x_0,r_0)$ for all $t\in(-r_1,r_1)$. Then for each $t\in(-r_1,r_1)$,
\[
\Phi^t:\ \ \Gamma_0\to\Gamma_t,\ \ \ \Phi^t(x)=\eta(x,t)
\]
is a differomorphism. Hence, there exists a positive constant $\sigma$, which
is independent of $t$, such that
\[
\sigma\mathcal{H}^{N-1}|_{\Gamma_t}\leq \Phi^{t}_*(\mathcal{H}^{N-1}|_{\Gamma_0})\leq \sigma^{-1}\mathcal{H}^{N-1}|_{\Gamma_t}.
\]

Suppose towards a contradiction that $\mu(\{x_0\})>0$. Then, for any given $\epsilon>0$,
  \[
\int_{-\epsilon r_1}^{\epsilon
  r_1}\int_{\Gamma_{t}}w_{\alpha_{k}}^{2}d\mathcal{H}^{N-1}dt=\int_{\cup_{t\in(-\epsilon r_1,\epsilon r_1)}\Gamma_{t}}w_{\alpha_{k}}^{2}dx\geq \frac{\mu(\{x_0\})}{2}
\]
 for all large $k$.
Thus, there exist constants $s_{k}\in (-\epsilon r_1,\epsilon r_1)$ such that
 \begin{equation}\label{ineq-z7}
\int_{\Gamma_{s_{k}}}w_{\alpha_{k}}^{2}d\mathcal{H}^{N-1}\geq
\frac{\mu(\{x_0\})}{4\epsilon r_1},\ \ \mbox{for all large}\ k.
   \end{equation}
Without loss of generality, we may assume that $s_{k}\in
[0,\epsilon r_1)$.

On the other hand, it follows from the same argument as in \cite[Lemma 3.1]{CL2008} that $\mu(\Omega_{1})=0$.
Since all critical points are isolated, we may assume that
$\inf_{\cup_{t\in(r_1-\epsilon r_1,r_1)}\Gamma_t}|\nabla m(x)|>0.$
This, together with the definitions of $\Omega_1$ and the weak convergence of $w_{\alpha_k}^{2}\to\mu$, yields that
\[
\lim_{k\to\infty}\int_{\cup_{t\in(r_1-\epsilon r_1,r_1)}\Gamma_t}w_{\alpha_k}^{2}dx\leq\mu(\Omega_{1})=0,
  \]
from which it follows, for all large $k$, that
  \[
\int_{r_1-\epsilon r_1}^{r_1}\int_{\Gamma_{t}}w_{\alpha_{k}}^{2}d\mathcal{H}^{N-1}dt=\int_{\cup_{t\in(r_1-\epsilon r_1,r_1)}\Gamma_t}w_{\alpha_{k}}^{2}dx<\epsilon\mu(\{x_0\}).
\]
Thus, we have
 \begin{equation}\label{ineq-z8}
\int_{\Gamma_{t_{k}}}w_{\alpha_{k}}^{2}d\mathcal{H}^{N-1}<\frac{\mu(\{x_0\})}{r_1}
   \end{equation}
for some constant $t_{k}\in (r_1-\epsilon r_1,r_1)$ and all large $k$.

By the assumption, we have, for all $t\in (0,r_1)$,
$\xi(x)\cdot\nabla m(x)\geq 0$ on $\Gamma_t$. As a consequence, by means of \eqref{ineq-z6}, \eqref{ineq-z7},  and \eqref{ineq-z8}, we obtain
\begin{equation*}
\begin{aligned}
\frac{\sigma\mu(\{x_0\})}{4\epsilon r_1}-\frac{\mu(\{x_0\})}{\sigma r_1}&\leq \sigma\int_{\Gamma_{s_k}}w_{\alpha_k}^2d\mathcal{H}^{N-1}-\sigma^{-1}\int_{\Gamma_{t_k}}w_{\alpha_k}^2d\mathcal{H}^{N-1}\\
&\leq \int_{\Gamma_{0}}w_{\alpha_k}^2(\eta(x,s_k))d\mathcal{H}^{N-1}-\int_{\Gamma_{0}}w_{\alpha_k}^2(\eta(x,t_k))d\mathcal{H}^{N-1}\\
&=\int_{\Gamma_0}\int_{s_k}^{t_k}-\partial_t w_{\alpha_k}^2(\eta(x,t))dtd\mathcal{H}^{N-1}\\
&=\int_{\Gamma_0}\int_{s_k}^{t_k}-2w_{\alpha_k}(\eta(x,t))\nabla w_{\alpha_k}(\eta(x,t))\cdot\frac{\xi(\eta(x,t))}{\xi(\eta(x,t))\cdot\tilde\nu(\eta(x,t))}dtd\mathcal{H}^{N-1}\\
&\leq\frac{2}{\delta} \int_{\Gamma_0}\int_{s_k}^{t_k}2\left(\nabla w_{\alpha_k}(\eta(x,t))\cdot\xi(\eta(x,t))\right)^{-}w_{\alpha_k}(\eta(x,t))dtd\mathcal{H}^{N-1}\\
&\leq \frac{2}{\delta}\int_{\Gamma_0}\int_{s_k}^{t_k}\left[\left(\nabla w_{\alpha_k}(\eta(x,t))\cdot\xi(\eta(x,t))\right)^{-}\right]^2+w_{\alpha_k}^2(\eta(x,t))dtd\mathcal{H}^{N-1}\\
&=\frac{2}{\delta}\int_{s_k}^{t_k} \int_{\Gamma_0}\left[\left(\nabla w_{\alpha_k}(\eta(x,t))\cdot\xi(\eta(x,t))\right)^{-}\right]^2+w_{\alpha_k}^2(\eta(x,t))d\mathcal{H}^{N-1}dt\\
&\leq\frac{2}{\delta\sigma}\int_{s_k}^{t_k} \int_{\Gamma_t}\left[\left(\nabla w_{\alpha_k}(x)\cdot\xi(x)\right)^{-}\right]^2+w_{\alpha_k}^2(x)d\mathcal{H}^{N-1}dt\\
&\leq \frac{2}{\delta\sigma}\int_{\cup_{t\in[s_k,t_k]}\Gamma_t}\left[\left(\nabla w_{\alpha_k}(x)\cdot\xi(x)\right)^{-}\right]^2+w_{\alpha_k}^2(x)dx\\
&\leq \frac{2}{\delta\sigma}\int_{\left(\cup_{t\in[s_k,t_k]}\Gamma_t\right)\cap\{x:\ \xi(x)\cdot\nabla m(x)\geq 0\}}\left[\nabla w_{\alpha_k}\cdot\xi-\alpha_k w_{\alpha_k}\xi\cdot\nabla m\right]^2dx+\int_{\Omega}w_{\alpha_k}^2dx\\
&\leq\frac{2}{\delta\sigma}\int_{\Omega}\left[\nabla w_{\alpha_k}\cdot\xi-\alpha_k w_{\alpha_k}\xi\cdot\nabla m\right]^2+w_{\alpha_k}^2dx\\
&\leq\frac{2}{\delta\sigma}\int_{\Omega}\left|\nabla w_{\alpha_k}-\alpha_k w_{\alpha_k}\nabla m\right|^2+w_{\alpha_k}^2dx\\
&\leq \frac{2(C+1)}{\delta\sigma},
\end{aligned}
\end{equation*}
a contradiction provided $\epsilon$ is small enough, and in turn $\mu(\{x_{0}\})=0$ holds.

\textbf{Case 2}. $x_0\in\partial\Omega$. We assume that (c)-(ii) in (A3) holds; the other cases can be handled in a similar manner. Let $\Gamma,\,\nu,\,\xi,\,r_{0}, \,\delta,\,D_1,\,D_2$ be given as in (c)-(ii) of (A3).
Also define the signed distance function
$\tilde d(x)$, $\tilde\nu(x)$ and $\eta(x,t)$ as in {\bf Case 1} such that $\xi(x)\cdot\nu(x)\geq\delta/2$ for all $x\in B(x_0,r_0)$. Thus $\tilde d(\eta(x,t))=t$ for all $x\in\Gamma$ and $t\in(-r_0,r_0)$.
For $r_1\in(0,r_0)$, we also set $\Gamma_t=\{\eta(x,t):\ \ x\in\Gamma\cap B(x_0,r_1)\}$.
We may choose $r_1$ to be so small that $\Gamma_t\subset B(x_0,r_0)$ for all $t\in(-r_1,r_1)$.
Given $s,t\in(-r_1,r_1)$,  then
$\Phi^{s,t}:\ \Gamma_s\to\Gamma_t,\ \,\Phi^{s,t}(x)=\eta(x,t-s)$
is a differomorphism, and there exists a constant $\sigma>0$, independent of $s$ and $t$, such that
\[
\sigma\mathcal{H}^{N-1}|_{\Gamma_t}\leq \Phi^{s,t}_*(\mathcal{H}^{N-1}|_{\Gamma_s})\leq \sigma^{-1}\mathcal{H}^{N-1}|_{\Gamma_t}.
\]

Let $\tilde{\Gamma}_t=\Gamma_t\cap\Omega$. Since
$\xi(x)\cdot n(x)\leq0$ on $\partial\Omega\cap \overline{D}_1$ and
$\xi(x)\cdot n(x)\geq 0$ on $\partial\Omega\cap \overline{D}_2$,
we have
\[
\Phi^{s,t}(\tilde{\Gamma}_s)\subset\tilde{\Gamma}_t,\ \ \forall 0\leq s\leq t\leq r_1\ \ \mbox{and}\ \
\Phi^{s,t}(\tilde{\Gamma}_s)\subset\tilde{\Gamma}_t,\ \ \forall 0\geq s\geq t\geq -r_1.
\]

As before, we suppose that $\mu(\{x_0\})>0$. Then, for any given $\epsilon>0$,
  \[
\int_{-\epsilon r_1}^{\epsilon
  r_1}\int_{\tilde{\Gamma}_{t}}w_{\alpha_{k}}^{2}d\mathcal{H}^{N-1}dt=\int_{\cup_{t\in(-\epsilon r_1,\epsilon r_1)}\tilde{\Gamma}_{t}}w_{\alpha_{k}}^{2}dx\geq \frac{\mu(\{x_0\})}{2}
\]
 for all large $k$, which gives rise to
 \begin{equation}\label{ineq-z10}
\int_{\tilde{\Gamma}_{s_{k}}}w_{\alpha_{k}}^{2}d\mathcal{H}^{N-1}\geq
\frac{\mu(\{x_0\})}{4\epsilon r_1}
   \end{equation}
for $s_{k}\in (-\epsilon r_1,\epsilon r_1)$ for all large $k$.
We may assume that $s_{k}\in [0,\epsilon r_1)$.

By a similar reasoning as to derive \eqref{ineq-z8}, for all large $k$, it follows that
 \begin{equation}\label{ineq-z11-a}
\int_{\tilde{\Gamma}_{t_{k}}}w_{\alpha_{k}}^{2}dx<\frac{\mu(\{x_0\})}{r_1}
   \end{equation}
for some constant $t_{k}\in (r_1-\epsilon r_1,r_1)$ and all large $k$.
In addition, we may assume that for all $t\in (0,r_1)$,
$\xi(x)\cdot\nabla m(x)\geq 0$ on $\Gamma_t$.

Now, using \eqref{ineq-z6}, \eqref{ineq-z10},  \eqref{ineq-z11-a} and the fact that
$\Phi^{t_k,s_k}(\tilde{\Gamma}_{t_k})\subset\tilde{\Gamma}_{s_k}$, \
similarly to {\bf Case 1}, we have
\begin{equation*}
\begin{aligned}
\frac{\mu(\{x_0\})}{4\epsilon r_1}-\frac{\mu(\{x_0\})}{\sigma r_1}&\leq \int_{\tilde{\Gamma}_{s_k}}w_{\alpha_k}^2d\mathcal{H}^{N-1}-\sigma^{-1}\int_{\tilde{\Gamma}_{t_k}}w_{\alpha_k}^2d\mathcal{H}^{N-1}\\
&\leq \int_{\tilde{\Gamma}_{s_k}}w_{\alpha_k}^2d\mathcal{H}^{N-1}-\sigma^{-1}\int_{\Phi^{t_k,s_k}(\tilde{\Gamma}_{t_k})}w_{\alpha_k}^2d\mathcal{H}^{N-1}\\
&\leq \int_{\tilde{\Gamma}_{s_k}}w_{\alpha_k}^2(x)d\mathcal{H}^{N-1}-\int_{\tilde{\Gamma}_{s_k}}w_{\alpha_k}^2(\eta(x,t_k-s_k))d\mathcal{H}^{N-1}\\
&=\int_{\tilde{\Gamma}_{s_k}}\int_{0}^{t_k-s_k}-\partial_t w_{\alpha_k}^2(\eta(x,t))dtd\mathcal{H}^{N-1}\\
&=\int_{\tilde{\Gamma}_{s_k}}\int_{0}^{t_k-s_k}-2w_{\alpha_k}(\eta(x,t))\nabla w_{\alpha_k}(\eta(x,t))\cdot\frac{\xi(\eta(x,t))}{\xi(\eta(x,t))\cdot\tilde\nu(\eta(x,t))}dtd\mathcal{H}^{N-1}\\
&\leq\frac{2}{\delta} \int_{\tilde{\Gamma}_{s_k}}\int_{0}^{t_k-s_k}2\left(\nabla w_{\alpha_k}(\eta(x,t))\cdot\xi(\eta(x,t))\right)^{-}w_{\alpha_k}(\eta(x,t))dtd\mathcal{H}^{N-1}\\
&\leq \frac{2}{\delta}\int_{\tilde{\Gamma}_{s_k}}\int_{0}^{t_k-s_k}\left[\left(\nabla w_{\alpha_k}(\eta(x,t))\cdot\xi(\eta(x,t))\right)^{-}\right]^2+w_{\alpha_k}^2(\eta(x,t))dtd\mathcal{H}^{N-1}\\
&=\frac{2}{\delta}\int_{0}^{t_k-s_k}\int_{\tilde{\Gamma}_{s_k}}\left[\left(\nabla w_{\alpha_k}(\eta(x,t))\cdot\xi(\eta(x,t))\right)^{-}\right]^2+w_{\alpha_k}^2(\eta(x,t))d\mathcal{H}^{N-1}dt\\
&\leq\frac{2}{\delta\sigma}\int_{s_k}^{t_k} \int_{\tilde{\Gamma}_t}\left[\left(\nabla w_{\alpha_k}(x)\cdot\xi(x)\right)^{-}\right]^2+w_{\alpha_k}^2(x)d\mathcal{H}^{N-1}dt\\
&\leq \frac{2}{\delta\sigma}\int_{\left(\cup_{t\in[s_k,t_k]}\tilde{\Gamma}_t\right)\cap\{x:\ \xi(x)\cdot\nabla m(x)\geq 0\}}\left[\nabla w_{\alpha_k}\cdot\xi-\alpha_k w_{\alpha_k}\xi\cdot\nabla m\right]^2dx+\int_{\Omega}w_{\alpha_k}^2dx\\
&\leq\frac{2}{\delta\sigma}\int_{\Omega}\left[\nabla w_{\alpha_k}\cdot\xi-\alpha_k w_{\alpha_k}\xi\cdot\nabla m\right]^2+w_{\alpha_k}^2dx\\
&\leq \frac{2(C+1)}{\delta\sigma},
\end{aligned}
\end{equation*}
which is impossible if we send $\epsilon$ to be small enough. Therefore, we have
proved $\mu(\{x_{0}\})=0$.
\end{proof}

\section{One dimensional problem}

In this section, we consider the eigenvalue problem with a general boundary condition in one space dimension:
\begin{equation}
 \left\{\begin{array}{ll}
 \medskip
 \displaystyle
 -\varphi''(x)-2\alpha m'(x)\varphi'(x)+V(x)\varphi(x)=\lambda\varphi(x),\ &0<x<1,\\
 \displaystyle
 -\hbar_1\varphi'(0)+\ell_1\varphi(0)=\hbar_2\varphi'(1)+\ell_2\varphi(1)=0,
 \end{array}
 \right.
 \label{p}
\end{equation}
where $m\in C^2([0,1]),\,V\in C([0,1])$, and the constants
$\hbar_i,\,\ell_i\,(i=1,2)$ satisfy $|\hbar_i|+|\ell_i|>0\,(i=1,2)$.

As in \cite{PZ2018}, in this section, unless otherwise specified, we always assume that
\vskip4pt
\noindent\textbf{(A4)}:\ \ $m$ is not constant and $m'(x)$\ changes sign at most finitely many times on $[0,1]$.
\vskip3pt
\noindent Hence, $m$ can have various natural kinds of degeneracy.

\vskip4pt
Before stating the main results of this section, we need to classify the set of points of  local maximum points of $m$ and introduce some notations as in \cite{PZ2018}.

\begin{definition}\label{def-2}

\vskip5pt \item {\rm (i)} A {\bf point of local maximum} of $m$ is a point $x\in[0,1]$ such that there is a small $\epsilon_0>0$ such that $m(x)\geq m(y)$ in $(x-\epsilon_0,x+\epsilon_0)\cap[0,1]$, and such $x$ is said to be an {\bf interior point of local maximum} if $x\in(0,1)$;

\vskip5pt \item {\rm (ii)} An {\bf isolated point of local maximum} of $m$ is a point $x^{I}\in[0,1]$ such that there is a small $\epsilon_0>0$ such that $m(x^{I})>m(x)$ in $(x^{I}-\epsilon_0,x^{I}+\epsilon_0)\cap[0,1]\setminus\{x^{I}\}$;

\vskip5pt \item {\rm (iii)} A {\bf segment of local maximum} of $m$ is a closed interval $[a,b]\subset[0,1]$ such that there is a small $\epsilon_0>0$ such that
$m$ is constant on the closed interval $[a,b]$, $m$ is monotone (non-increasing or non-decreasing) on $[a-\epsilon_0,a]\cap[0,1]$ and $[b,b+\epsilon_0]\cap[0,1]$, and
for any small $\epsilon>0$, $\{x\in[0,1]:\ m'(x)\not=0\}\cap(a-\epsilon,a)\not=\emptyset$ if $0<a$ and $\{x\in[0,1]:\ m'(x)\not=0\}\cap(b,b+\epsilon)\not=\emptyset$ if $b<1$.

\end{definition}

Note that the definition of an isolated point of local maximum in Definition \ref{def-2} is exactly that in  Definition \ref{def-1}(iii) under the assumption (A4). Moreover, if (A4) holds,  clearly $m$ admits at most finitely many isolated points of local maximum. For later purpose, we will use the same notations
$$
\mbox{$[a^i,b^j]\subset[0,1]$\ \ \ with $0\leq a<b\leq 1$,\ $i,j\in\{I,\, D\}$}
$$
as in \cite{PZ2018} to distinguish all possible segments of local maximum of $m$; see (1)-(5) in page 2527 of \cite{PZ2018}. Here, the capital letters $I$ and $D$ represent increasing and decreasing, respectively. For example, $[a^{{I}},b^{{D}}]$ means that $m$ increases in a left neighbourhood of $a^{{I}}$ and decreases in a right neighbourhood of $b^{{D}}$ while $m$ is constant on $[a^{{I}},b^{{D}}]$. Under the assumption (A4), $m$ admits at most finitely many segments $[a^{{I}},b^{{D}}]$ and $[a^{{D}},b^{{I}}]$ of local maximum, and it has at most countably many segments $[a^{{I}},b^{{I}}]$ and $[a^{{D}},b^{{D}}]$ of local maximum.

As in \cite{PZ2018}, we have to introduce some further notations as follows. Given a closed interval $[a,b]\subset[0,1]$ with $0<a<b<1$ and $i,j\in\{\mathcal{N},\, \mathcal{D}\}$, we denote by $\lambda_1^{{ij}}(a,b)$
the principal eigenvalue of the elliptic eigenvalue problem:
 \begin{equation}
 -\varphi''(x)+V(x)\varphi(x)=\lambda\varphi(x),\ a<x<b
 \nonumber
 \end{equation}
with the Neumann boundary condition (Dirichlet boundary condition, respectively) at the left boundary point $a$ if $i=\mathcal{N}$ (if $i=\mathcal{D}$, respectively), and Neumann boundary condition (Dirichlet boundary condition, respectively) at the right boundary point $b$ if $j=\mathcal{N}$ (if $j=\mathcal{D}$, respectively).

Given $[0,b]\subset[0,1]$ with $0<b<1$ and $i\in\{\mathcal{N},\, \mathcal{D}\}$, denote by $\lambda_1^{{\mathcal{R}i}}(0,b)$
the principal eigenvalue of
 \begin{equation}
 -\varphi''(x)+V(x)\varphi(x)=\lambda\varphi(x),\ 0<x<b;\ \ \ -\hbar_1\varphi'(0)+\ell_1\varphi(0)=0
 \nonumber
 \end{equation}
with the Neumann boundary condition (Dirichlet boundary condition, respectively ) at $b$ if $i=\mathcal{N}$ (if $i=\mathcal{D}$, respectively).

Given $[a,1]\subset[0,1]$ with $0<a<1$ and $i\in\{\mathcal{N},\, \mathcal{D}\}$, denote by $\lambda_1^{{i\mathcal{R}}}(a,1)$
the principal eigenvalue of
 \begin{equation}
 -\varphi''(x)+V(x)\varphi(x)=\lambda\varphi(x),\ a<x<1; \ \ \ \hbar_2\varphi'(1)+\ell_2\varphi(1)=0
 \nonumber
 \end{equation}
with the Neumann boundary condition (Dirichlet boundary condition, respectively ) at $a$ if $i=\mathcal{N}$ (if $i=\mathcal{D}$, respectively).

In view of (A4), the set $\mathcal{M}$ of local maxima of $m$ consists of
 \begin{equation}
 \mathcal{M}=\cup_{i=1}^9\mathcal{M}_i,
 \nonumber
 \end{equation}
where
 \begin{equation}
 \left.\begin{array}{lll}
 \medskip
 \displaystyle
 \mathcal{M}_1=\{x^{I}_i\}_{i=1}^{h_1},\ \ \ \ \mathcal{M}_2=\{[a_i^{{I}},b_i^{{I}}]\}_{i=1}^{h_2},
 \ \ \ \ \mathcal{M}_3=\{[a_i^{{I}},b_i^{{D}}]\}_{i=1}^{h_3},\ \ \ \ \mathcal{M}_4=\{[a_i^{{D}},b_i^{{I}}]\}_{i=1}^{h_4},\\
 \medskip
 \displaystyle
 \mathcal{M}_5=\{[a_i^{{D}},b_i^{{D}}]\}_{i=1}^{h_5},\ \ \mathcal{M}_6\subset\{[0,a^{{I}}]\},\ \
 \mathcal{M}_7\subset\{[0,a^{{D}}]\},\ \ \mathcal{M}_8\subset\{[a^{{I}},1]\},\ \ \mathcal{M}_9\subset\{[a^{{D}},1]\}.
 \end{array}
 \right.
 \nonumber
 \end{equation}
Note that $h_1,\,h_3,\,h_4$ are finite integers while $h_2,\,h_5$ may be finite integers or infinity. Though some $\mathcal{M}_i$ may be empty, obviously $\mathcal{M}\not=\emptyset$.

For simplicity, let us also set
 \begin{equation}
 \left.\begin{array}{lll}
 \medskip
 \displaystyle
 \mathfrak{L}&=&
 \displaystyle
 \min\Big\{\min\{\lambda_1^{\mathcal{ND}}(a^I_i,b^I_i):\ [a^I_i,b^I_i]\in\mathcal{M}_2\},\ \
 \min\{\lambda_1^{\mathcal{NN}}(a^I_i,b^D_i):\ [a^I_i,b^D_i]\in\mathcal{M}_3\},\\
 \medskip
 &&\ \ \ \ \ \ \ \
 \displaystyle
 \min\{\lambda_1^{\mathcal{DD}}(a^D_i,b^I_i):\ [a^D_i,b^I_i]\in\mathcal{M}_4\},\ \
 \min\{\lambda_1^{\mathcal{DN}}(a^D_i,b^D_i):\ [a^D_i,b^D_i]\in\mathcal{M}_5\}\Big\}.
 \end{array}
 \right.
 \nonumber
 \end{equation}

In \cite{PZ2018}, the situation
 \begin{equation}\label{regular}
 \hbar_1\ell_1\geq0,\ \ \hbar_2\ell_2\geq0
  \end{equation}
was studied. In this case, it is clear that $\lambda_1(\alpha)\geq\min_{x\in[0,1]}V(x)$ for all $\alpha$.
The main result for problem \eqref{p} obtained in \cite{PZ2018} reads as follows.

\begin{thm}\label{theorem-R} Assume that {\rm (A4)} and \eqref{regular} hold. Then we have the following assertions.

 \begin{itemize}

\item[(i)] $\lim\limits_{\alpha\to\infty}\lambda_1(\alpha)=\infty$ if and only if

{\rm{(i-1)}}\ $\mathcal{M}=\mathcal{M}_1\subset\{0,1\}$ when $\ell_1\not=0$ and $\ell_2\not=0$.\

{\rm{(i-2)}}\ $\mathcal{M}=\mathcal{M}_1=\{0\}$ when $\ell_1\not=0$ and $\ell_2=0$.\

{\rm{(i-3)}}\ $\mathcal{M}=\mathcal{M}_1=\{1\}$ when $\ell_1=0$ and $\ell_2\not=0$.

\item[(ii)] Otherwise, $-\infty<\lim\limits_{\alpha\to\infty}\lambda_1(\alpha)<\infty$, and
 \begin{equation}
 \left.\begin{array}{lll}
 \medskip
 \displaystyle
 \lim\limits_{\alpha\to\infty}\lambda_1(\alpha)&=&
 \medskip
 \displaystyle
 \min\Big\{\mathfrak{L},\ \ \min\{V(x):\ x\in\mathcal{M}_1\setminus\{0,\,1\}\},\ \ \lambda_1^{\mathcal{RD}}(0,a^I)\ (\mbox{if}\ [0,a^I]\in\mathcal{M}_6),\\
 &&\ \ \ \ \ \ \
 \medskip
 \displaystyle
 \lambda_1^{\mathcal{RN}}(0,a^D) \ (\mbox{if}\ [0,a^D]\in\mathcal{M}_7),\ \ \lambda_1^{\mathcal{NR}}(a^I,1)\ (\mbox{if}\ [a^I,1]\in\mathcal{M}_8),\\
 &&\ \ \ \ \ \ \
 \medskip
 \displaystyle
 \lambda_1^{\mathcal{DR}}(a^D,1)\ (\mbox{if}\ [a^D,1]\in\mathcal{M}_9),\ \ V(0)\ (\mbox{if\ $0\in\mathcal{M}_1$ and $\ell_1=0$}),\\
 &&\ \ \ \ \ \ \
 \displaystyle
 V(1)\ (\mbox{if\ $1\in\mathcal{M}_1$ and $\ell_2=0$})\, \Big\}.
 \end{array}
 \right.
 \nonumber
 \end{equation}
\end{itemize}
\end{thm}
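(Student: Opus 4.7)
The plan is to prove Theorem \ref{theorem-R} using the measure-theoretic strategy of Section 2 specialized to $N=1$, with explicit bookkeeping of the types of local-maximum segments allowed by \textbf{(A4)}. Writing \eqref{p} in the Robin form of \eqref{eq:1.1}, one reads off $\beta(0)=\ell_1/\hbar_1$ (resp.\ $\beta(1)=\ell_2/\hbar_2$) whenever $\hbar_i\neq 0$, so condition \eqref{regular} amounts to $\beta\geq 0$ and thus $\Sigma_3=\emptyset$; the uniform lower bound $\lambda_1(\alpha)\geq\min_{[0,1]}V$ is immediate. Set $w_\alpha=e^{\alpha m}\varphi_\alpha$ with $\int_0^1 w_\alpha^2\,dx=1$, and extract a subsequence $\alpha_k\to\infty$ along which $w_{\alpha_k}^2\,dx$ converges weakly-$\ast$ to a probability measure $\mu$ on $[0,1]$. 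The whole argument reduces to (a) locating $\mathrm{supp}(\mu)$ and (b) identifying $\mu$ on each component of that support.

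For step (a), I would show $\mathrm{supp}(\mu)\subset\mathcal{M}$, with boundary points allowed only in the degenerate cases $\mathcal{M}_6$--$\mathcal{M}_9$ or $\mathcal{M}_1\cap\{0,1\}$ with the matching Neumann endpoint. Intervals on which $m'$ has a fixed nonzero sign are ruled out by the flow/energy argument of Lemma \ref{lemma-degenerate} with $\xi(x)=\mathrm{sign}(m'(x))$; constant segments that are not true local-maximum segments in the sense of Definition \ref{def-2} are ruled out by taking $\xi$ to point toward the adjacent higher plateau; an isolated endpoint with $\ell_i\neq 0$ (hence $\beta>0$) is handled by the pointwise trace argument used to treat $\mu(\Sigma_4)=0$ in the proof of Theorem \ref{nonnegative}, combined with Lemma \ref{lemma-trace}. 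For step (b), the monotonicity labels are to be matched to $\mathcal{N}/\mathcal{D}$ by the rule that at a left endpoint $a$, label $I$ (i.e.\ $m$ rises from $a-\epsilon$ to $a$) leaves test functions free at $a$ so the limiting boundary condition is $\mathcal{N}$, while $D$ forces decay and gives $\mathcal{D}$; at a right endpoint the two reverse. Lower semicontinuity of the Dirichlet energy of $\varphi_\alpha=e^{-\alpha m}w_\alpha$ on each segment, applied along the extracted subsequence, then yields
\[
\liminf_{\alpha\to\infty}\lambda_1(\alpha)\ \geq\ \int_0^1 V\,d\mu\ \geq\ \text{the RHS in (ii)}.
\]

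The matching upper bound is obtained as in Lemma \ref{upperbound}: for each candidate appearing on the RHS, take the principal eigenfunction of the corresponding subproblem multiplied by a cutoff supported in a small neighborhood of the relevant segment or point, insert it into the Rayleigh quotient \eqref{eq:1.2}, and use the exponential weight $e^{2\alpha m}$ to annihilate the cutoff overhead as $\alpha\to\infty$. Part (i) then follows because under its hypotheses the support analysis forces $\mu\equiv 0$, which contradicts $\mu([0,1])=1$ unless $\lambda_1(\alpha)\to\infty$. \emph{The main obstacle is the consistent matching of $I/D$ labels to $\mathcal{N}/\mathcal{D}$ conditions, especially for segments belonging to $\mathcal{M}_2$ or $\mathcal{M}_5$, which can accumulate infinitely many times.} One must verify that each segment contributes independently, i.e.\ that the eigenfunction of a given subproblem can be localized into a small neighborhood of its own plateau without being disturbed by neighboring plateaus, and that the infimum over all candidates is attained or at least approximated along suitable test sequences; this requires a localized version of the flow/trace estimates of Section 2 rather than their global forms, together with cutoffs compatible with the $I/D$ conventions at both endpoints simultaneously.
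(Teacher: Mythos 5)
First, a structural remark: Theorem \ref{theorem-R} is not proved in this paper at all. The text preceding it reads ``The main result for problem \eqref{p} obtained in \cite{PZ2018} reads as follows,'' and the proofs in Section 3 (Theorems \ref{th1.2-n1}--\ref{th1.2-n3}) simply invoke the lemmas of \cite{PZ2018} rather than re-deriving them. So there is no internal proof to compare against; the comparison is with \cite{PZ2018}. With that said, your overall plan --- transform via $w=e^{\alpha m}\varphi$, extract the weak-$*$ limit $\mu$, locate $\mathrm{supp}(\mu)$ among the plateaus, match $I/D$ labels to $\mathcal{N}/\mathcal{D}$ boundary conditions, produce matching upper bounds from localized test functions --- is the strategy that underlies both Section~2 of this paper and \cite{PZ2018}, and your $I/D\to\mathcal{N}/\mathcal{D}$ dictionary (left $I\to\mathcal{N}$, left $D\to\mathcal{D}$, reversed at a right endpoint) is stated correctly.

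However, your displayed lower bound
\[
\liminf_{\alpha\to\infty}\lambda_1(\alpha)\ \geq\ \int_0^1 V\,d\mu\ \geq\ \text{RHS in (ii)}
\]
is a genuine error, not just an omission. The right-hand side of (ii) contains the subproblem eigenvalues $\lambda_1^{ij}(a,b)$, which satisfy $\lambda_1^{ij}(a,b)\geq\min_{[a,b]}V$ with strict inequality whenever the Rayleigh minimizer is non-constant --- which is generic. So $\int_0^1 V\,d\mu$ can be strictly below $\mathfrak{L}$, and your second inequality fails. The problem is that discarding the kinetic term $\int_0^1 e^{2\alpha m}|\varphi_\alpha'|^2$ to reach the intermediate quantity $\int V\,d\mu$ throws away precisely the contribution that produces the subproblem eigenvalues; the sentence invoking lower semicontinuity is thus undone by the next line. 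The correct argument must decompose $\mu$ over the plateaus, retain the localized gradient energy on each, pass to the limit via lower semicontinuity to get a contribution $\geq\mu([a_i,b_i])\,\lambda_1^{i_1 i_2}(a_i,b_i)$ from the $i$-th plateau, and only then bound from below by the minimum. Your own flagged worry (the possibly countably many segments in $\mathcal{M}_2,\mathcal{M}_5$) is a secondary issue that enters exactly here, because one needs an exhaustion or superadditivity argument to sum these contributions; and note also that the trace inequality of Lemma \ref{lemma-trace} is not actually needed in the regime \eqref{regular}, where $\beta\geq 0$ --- it is essential only for Theorems \ref{th1.2-n1}--\ref{th1.2-n3}.
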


In the sequel, we will consider all the remaining cases. Our first result concerns the case of
\begin{equation}\label{nonreg-1}
 \hbar_1\ell_1<0,\ \ \ \hbar_2\ell_2\geq0.
  \end{equation}
Indeed, we have the following conclusion.

\begin{theorem}\label{th1.2-n1} Assume that {\rm (A4)} and \eqref{nonreg-1} hold. Then we have the following assertions.

 \begin{itemize}

\item[(i)] $\lim\limits_{\alpha\to\infty}\lambda_1(\alpha)=\infty$ if and only if
$\mathcal{M}=\mathcal{M}_1=\{1\}$ when $\ell_2\not=0$.

\item[(ii)] $\lim\limits_{\alpha\to\infty}\lambda_1(\alpha)=-\infty$ if and only if
$\mathcal{M}=\mathcal{M}_1=\{0\}$.

\item[(iii)] Otherwise, $-\infty<\lim\limits_{\alpha\to\infty}\lambda_1(\alpha)<\infty$ and
 \begin{equation}
 \left.\begin{array}{lll}
 \medskip
 \displaystyle
 \lim\limits_{\alpha\to\infty}\lambda_1(\alpha)&=&
 \medskip
 \displaystyle
 \min\Big\{\mathfrak{L},\ \ \min\{V(x):\ x\in\mathcal{M}_1\setminus\{0,\,1\}\},\ \ \lambda_1^{\mathcal{RD}}(0,a^I)\ (\mbox{if}\ [0,a^I]\in\mathcal{M}_6),\\
 &&\ \ \ \ \ \ \
 \medskip
 \displaystyle
 \lambda_1^{\mathcal{RN}}(0,a^D) \ (\mbox{if}\ [0,a^D]\in\mathcal{M}_7),\ \ \lambda_1^{\mathcal{NR}}(a^I,1)\ (\mbox{if}\ [a^I,1]\in\mathcal{M}_8),\\
 &&\ \ \ \ \ \ \
 \medskip
 \displaystyle
 \lambda_1^{\mathcal{DR}}(a^D,1)\ (\mbox{if}\ [a^D,1]\in\mathcal{M}_9),\ V(1)\ (\mbox{if\ $1\in\mathcal{M}_1$ and $\ell_2=0$})\, \Big\}.
 \end{array}
 \right.
 \nonumber
 \end{equation}

\end{itemize}
\end{theorem}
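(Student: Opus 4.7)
The plan is to adapt the approach of Theorem \ref{theorem-R} in \cite{PZ2018} to the present setting $\hbar_1\ell_1<0$, where the negative Robin coefficient $\beta_1:=\ell_1/\hbar_1<0$ at the left endpoint makes the Rayleigh quotient potentially unbounded below. I shall work with the 1D form
\[
\lambda_1(\alpha)=\inf_{\varphi\in H^1(0,1),\,\varphi\not\equiv0}\frac{\int_0^1 e^{2\alpha m}(\varphi'^2+V\varphi^2)\,dx+\beta_1 e^{2\alpha m(0)}\varphi(0)^2+\beta_2 e^{2\alpha m(1)}\varphi(1)^2}{\int_0^1 e^{2\alpha m}\varphi^2\,dx},
\]
with $\beta_2:=\ell_2/\hbar_2\geq 0$ (and Dirichlet at an endpoint whenever $\hbar_j=0$), and exploit weak-compactness of the probability measures $w_\alpha^2\,dx:=e^{2\alpha m}\varphi_\alpha^2\,dx/\!\!\int e^{2\alpha m}\varphi_\alpha^2\,dx$, whose subsequential weak limits $\mu$ live on $\mathcal{M}$ up to the usual exclusions coming from the boundary data.

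For (ii), since $0$ is an isolated local maximum of $m$ and $\beta(0)=\beta_1<0$, the 1D specialization of Lemma \ref{unbounded} yields $\lim\lambda_1(\alpha)=-\infty$; conversely, if $\mathcal{M}$ contains any other element, the test-function constructions in (iii) below produce $\limsup\lambda_1(\alpha)<+\infty$, ruling out divergence to $-\infty$. For (i), when $\mathcal{M}=\{1\}$ the function $m$ is monotone non-decreasing on $[0,1]$, so $\nu\equiv-1$ verifies (A2) at $0$ and Lemma \ref{lowerbound} gives $\liminf\lambda_1(\alpha)>-\infty$. To upgrade to $\lim=+\infty$, I would argue by contradiction: supposing $\lambda_1(\alpha_k)$ remains bounded, extract a weak limit $\mu$ of $w_{\alpha_k}^2$; the 1D trace inequality Lemma \ref{lemma-trace} together with the monotonicity of $m$ forces $\mu(\{0\})=0$, while the $\mu(\Sigma_4)=0$ argument in the proof of Theorem \ref{nonnegative} (applicable because $\ell_2\not=0$ combined with $\hbar_2\ell_2\geq 0$ yields either Dirichlet or strictly positive Robin at $1$) forces $\mu(\{1\})=0$, contradicting $\mu([0,1])=1$.

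Case (iii) is the main case and pins down the precise value of the limit. For the upper bound, for each element of $\mathcal{M}\setminus\{0\}$ I would plug the expected test function into the Rayleigh quotient: a sharp bump near each isolated interior maximum in $\mathcal{M}_1\setminus\{0,1\}$ yielding $V(x)$; the principal eigenfunction of the reduced problem on each segment $[a,b]\in\mathcal{M}_2\cup\cdots\cup\mathcal{M}_5$ (extended by zero) yielding $\lambda_1^{ij}(a,b)$; and the analogous construction retaining the original Robin condition at $0$ or $1$ for the boundary-touching segments in $\mathcal{M}_6\cup\cdots\cup\mathcal{M}_9$, yielding the $\mathcal{R}$-labelled eigenvalues. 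The Rayleigh-quotient computation on each collapses because $e^{2\alpha m}$ is constant on a segment of local maximum, exactly as in the proof of Theorem \ref{theorem-R}. For the lower bound, since $0\notin\mathcal{M}$ forces $m$ to be non-decreasing on some $[0,r]$, (A2) holds at $0$ and Lemma \ref{lowerbound} gives $\liminf\lambda_1(\alpha)>-\infty$; extracting a weak limit $\mu$ of $w_{\alpha_k}^2$, the trace inequality of Lemma \ref{lemma-trace} absorbs the negative boundary contribution at $0$, and the argument patterns of Theorems \ref{nonnegative} and \ref{general} then force $\mathrm{supp}(\mu)\subset\mathcal{M}\setminus\{0\}$. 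Decomposing the Rayleigh quotient across the connected components of $\mathrm{supp}(\mu)$ and invoking the reduced eigenvalue estimate on each component gives $\liminf\lambda_1(\alpha)\geq$ the same minimum, matching the upper bound.

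The main technical obstacle throughout is controlling the boundary integral $\beta_1 e^{2\alpha m(0)}\varphi_\alpha(0)^2$, which with $\beta_1<0$ could a priori send the Rayleigh quotient below any finite level. The 1D trace inequality
\[
e^{2\alpha m(0)}\varphi(0)^2\leq\epsilon\int_0^r e^{2\alpha m}\varphi'^2\,dx+C(\epsilon)\int_0^r e^{2\alpha m}\varphi^2\,dx\quad\text{whenever $m$ is non-decreasing on $[0,r]$,}
\]
is the key device: choosing $\epsilon$ so small that $1+\beta_1\epsilon>0$, the first right-hand term is absorbed by the (nonnegative) interior gradient term, while the second term becomes asymptotically negligible because $\mu(\{0\})=0$, a fact itself obtained from the same estimate. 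Making this self-referential coupling precise, and correctly handling boundary-touching segments $[0,a^{I}]\in\mathcal{M}_6$ and $[0,a^{D}]\in\mathcal{M}_7$ where $\mu$ may genuinely place mass near $0$ via the $\mathcal{R}$-part of the reduced problem, is the technical heart of the proof.
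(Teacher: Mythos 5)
Your overall architecture aligns with the paper's: transform to the $w$ formulation via $w=e^{\alpha m}\varphi$, extract a weak limit $\mu$ of the probability measures $w_\alpha^2\,dx$, use the trace inequality Lemma \ref{lemma-trace} to absorb the negative Robin term at $0$, invoke Lemmas \ref{unbounded} and \ref{lowerbound}, build test functions for upper bounds, and analyze $\mathrm{supp}(\mu)$. The paper's own proof does much the same but delegates the technical 1D support analysis to \cite[Lemmas 2.1, 2.6--2.8, Theorem 1.2]{PZ2018} rather than re-deriving it from the higher-dimensional machinery of Theorems \ref{nonnegative}--\ref{general}, which is the main structural difference. Your route through the $\Sigma_4$ argument (for (i)) to show $\mu(\{1\})=0$ and conclude $\mu([0,1])=0$ is plausible and different from the paper's, which instead shows $\mu(\{1\})=1$, deduces $\|w_\alpha\|_\infty\to\infty$, and reaches a contradiction via \cite[Lemma 2.7]{PZ2018}. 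For this step to close you would still need a 1D exclusion of mass at the degenerate interior critical points (those allowed by (A4) that are not local maxima); this is the content of \cite[Lemmas 2.6, 2.8]{PZ2018}, and it cannot be read off directly from the non-degenerate arguments in \cite{CL2008} which you implicitly invoke.

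There is, however, a genuine logical gap in your argument for part (ii)'s converse. You write that ``if $\mathcal{M}$ contains any other element, the test-function constructions in (iii) below produce $\limsup\lambda_1(\alpha)<+\infty$, ruling out divergence to $-\infty$.'' An upper bound on $\lambda_1(\alpha)$ cannot preclude $\lambda_1(\alpha)\to-\infty$; what is needed is a lower bound $\liminf\lambda_1(\alpha)>-\infty$. The paper obtains this by noting that when $0$ is not an isolated local maximum, $m$ is non-decreasing (possibly constant) on some $[0,b_0]$, so (A2) holds at $0$ and Lemma \ref{lowerbound} applies. You do use this exact reasoning inside (iii) and for (i), so the fix is simply to transplant the Lemma \ref{lowerbound} step into your proof of the converse of (ii) rather than relying on the test-function upper bound, which is the wrong-signed estimate.
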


Parallel to case \eqref{nonreg-1}, for the case
\begin{equation}\label{nonreg-2}
 \hbar_1\ell_1\geq0,\ \ \ \hbar_2\ell_2<0,
  \end{equation}
we can state the following result.

\begin{theorem}\label{th1.2-n2} Assume that {\rm (A4)} and \eqref{nonreg-2} hold. Then we have the following assertions.

 \begin{itemize}

\item[(i)] $\lim\limits_{\alpha\to\infty}\lambda_1(\alpha)=\infty$ if and only if
$\mathcal{M}=\mathcal{M}_1=\{0\}$ when $\ell_1\not=0$.

\item[(ii)] $\lim\limits_{\alpha\to\infty}\lambda_1(\alpha)=-\infty$ if and only if
$\mathcal{M}=\mathcal{M}_1=\{1\}$.

\item[(iii)] Otherwise, $-\infty<\lim\limits_{\alpha\to\infty}\lambda_1(\alpha)<\infty$ and
 \begin{equation}
 \left.\begin{array}{lll}
 \medskip
 \displaystyle
 \lim\limits_{\alpha\to\infty}\lambda_1(\alpha)&=&
 \medskip
 \displaystyle
 \min\Big\{\mathfrak{L},\ \ \min\{V(x):\ x\in\mathcal{M}_1\setminus\{0,\,1\}\},\ \ \lambda_1^{\mathcal{RD}}(0,a^I)\ (\mbox{if}\ [0,a^I]\in\mathcal{M}_6),\\
 &&\ \ \ \ \ \ \
 \medskip
 \displaystyle
 \lambda_1^{\mathcal{RN}}(0,a^D) \ (\mbox{if}\ [0,a^D]\in\mathcal{M}_7),\ \ \lambda_1^{\mathcal{NR}}(a^I,1)\ (\mbox{if}\ [a^I,1]\in\mathcal{M}_8),\\
 &&\ \ \ \ \ \ \
 \medskip
 \displaystyle
 \lambda_1^{\mathcal{DR}}(a^D,1)\ (\mbox{if}\ [a^D,1]\in\mathcal{M}_9),\ V(0)\ (\mbox{if\ $0\in\mathcal{M}_1$ and $\ell_1=0$})\, \Big\}.
 \end{array}
 \right.
 \nonumber
 \end{equation}

\end{itemize}
\end{theorem}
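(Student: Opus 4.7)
The plan is to reduce Theorem \ref{th1.2-n2} to the just--proved Theorem \ref{th1.2-n1} by the reflection $y=1-x$, rather than repeating a direct proof. The hypothesis \eqref{nonreg-2} ($\hbar_1\ell_1\geq 0,\ \hbar_2\ell_2<0$) is obtained from \eqref{nonreg-1} by interchanging the two boundary points, so the one--dimensional problem possesses an obvious mirror symmetry sending one setting to the other, and Theorem \ref{th1.2-n2} should follow from Theorem \ref{th1.2-n1} after relabeling the sets $\mathcal{M}_i$.

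Concretely I would set $\tilde\varphi(y)=\varphi(1-y)$, $\tilde m(y)=m(1-y)$, $\tilde V(y)=V(1-y)$ on $[0,1]$. Because $\tilde\varphi'(y)=-\varphi'(1-y)$, $\tilde m'(y)=-m'(1-y)$, and $\tilde\varphi''(y)=\varphi''(1-y)$, the two sign flips in the advection term cancel and \eqref{p} becomes
\[
-\tilde\varphi''(y)-2\alpha\tilde m'(y)\tilde\varphi'(y)+\tilde V(y)\tilde\varphi(y)=\lambda\tilde\varphi(y),\quad 0<y<1,
\]
with boundary conditions $-\tilde\hbar_1\tilde\varphi'(0)+\tilde\ell_1\tilde\varphi(0)=\tilde\hbar_2\tilde\varphi'(1)+\tilde\ell_2\tilde\varphi(1)=0$ where $\tilde\hbar_1=\hbar_2$, $\tilde\ell_1=\ell_2$, $\tilde\hbar_2=\hbar_1$, $\tilde\ell_2=\ell_1$. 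Hence $\tilde\hbar_1\tilde\ell_1=\hbar_2\ell_2<0$ and $\tilde\hbar_2\tilde\ell_2=\hbar_1\ell_1\geq 0$, so the transformed problem falls into the range of Theorem \ref{th1.2-n1}, and the principal eigenvalues of the two problems coincide for every $\alpha$.

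The next step is the bookkeeping on the local--maximum sets. The reflection preserves (A4), sends isolated points of local maximum to isolated points of local maximum, and reflects segments of local maximum while swapping the $I$ and $D$ superscripts (an increase of $m$ to the right of $b$ is read as a decrease of $\tilde m$ to the left of $1-b$, and so on). Tracing through the five interior types one finds $\tilde{\mathcal M}_1=\{1-x:x\in\mathcal M_1\}$ and $\tilde{\mathcal M}_2\leftrightarrow\mathcal M_5$, while $\mathcal M_3,\mathcal M_4$ are self--dual; for the boundary--touching segments $\tilde{\mathcal M}_6\leftrightarrow\mathcal M_9$ and $\tilde{\mathcal M}_7\leftrightarrow\mathcal M_8$. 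Under this dictionary, $\tilde\lambda_1^{\mathcal{ND}}(a^I,b^I)$ for a segment in $\tilde{\mathcal M}_2$ equals $\lambda_1^{\mathcal{DN}}(a^D,b^D)$ for the reflected segment in $\mathcal M_5$ (and similarly for the other interval principal eigenvalues), while $\tilde\lambda_1^{\mathcal{RD}}(0,a^I)$ and $\tilde\lambda_1^{\mathcal{RN}}(0,a^D)$ correspond to $\lambda_1^{\mathcal{DR}}(a^D,1)$ and $\lambda_1^{\mathcal{NR}}(a^I,1)$ in the original coordinates.

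With this dictionary in hand, applying Theorem \ref{th1.2-n1} to the reflected problem gives Theorem \ref{th1.2-n2}: the condition $\tilde{\mathcal M}=\tilde{\mathcal M}_1=\{1\}$ with $\tilde\ell_2\neq 0$ pulls back to $\mathcal M=\mathcal M_1=\{0\}$ with $\ell_1\neq 0$, yielding (i); the condition $\tilde{\mathcal M}=\tilde{\mathcal M}_1=\{0\}$ pulls back to $\mathcal M=\mathcal M_1=\{1\}$, yielding (ii); and in every other case the finite--limit formula in Theorem \ref{th1.2-n1}(iii) transports term--by--term to the formula in (iii) here. The only nontrivial point of the whole argument is the translation between the two lists of terms---verifying that, under the $I\leftrightarrow D$ swap, every $\mathcal M_i$ and every interval or Robin principal eigenvalue appearing in the Theorem \ref{th1.2-n1} formula lands on its counterpart in the Theorem \ref{th1.2-n2} formula with no case missed or doubled---but this is mechanical once the identifications above are in place.
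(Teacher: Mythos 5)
Your proposal is correct, and the reflection argument is a clean, rigorous way to make precise what the paper only asserts in passing (``Theorem \ref{th1.2-n2} ... can be verified similarly''). The paper's implicit route would be to rerun the direct estimates of the Theorem \ref{th1.2-n1} proof --- the variational characterization \eqref{4.1e}--\eqref{4.1f}, the trace inequality of Lemma \ref{lemma-trace}, the measure-concentration argument, and the material imported from \cite{PZ2018} --- with the roles of $x=0$ and $x=1$ interchanged. You instead observe that the substitution $y=1-x$, $\tilde\varphi(y)=\varphi(1-y)$, $\tilde m(y)=m(1-y)$, $\tilde V(y)=V(1-y)$ carries a problem satisfying \eqref{nonreg-2} to one satisfying \eqref{nonreg-1} with $(\tilde\hbar_1,\tilde\ell_1,\tilde\hbar_2,\tilde\ell_2)=(\hbar_2,\ell_2,\hbar_1,\ell_1)$ and the same principal eigenvalue, so that Theorem \ref{th1.2-n2} becomes a corollary of Theorem \ref{th1.2-n1}. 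The bookkeeping you describe is accurate: under reflection a left endpoint with superscript $I$ (resp.\ $D$) becomes a right endpoint with superscript $D$ (resp.\ $I$), and vice versa, so $\mathcal M_2\leftrightarrow\mathcal M_5$, $\mathcal M_3$ and $\mathcal M_4$ are type-preserving, $\mathcal M_6\leftrightarrow\mathcal M_9$, $\mathcal M_7\leftrightarrow\mathcal M_8$; correspondingly $\lambda_1^{\mathcal{ND}}\leftrightarrow\lambda_1^{\mathcal{DN}}$, $\lambda_1^{\mathcal{NN}}$ and $\lambda_1^{\mathcal{DD}}$ are invariant, and $\lambda_1^{\mathcal{RD}}(0,\cdot)\leftrightarrow\lambda_1^{\mathcal{DR}}(\cdot,1)$, $\lambda_1^{\mathcal{RN}}(0,\cdot)\leftrightarrow\lambda_1^{\mathcal{NR}}(\cdot,1)$; and the boundary term $V(1)$ (if $1\in\mathcal M_1$ and $\ell_2=0$) pulls back to $V(0)$ (if $0\in\mathcal M_1$ and $\ell_1=0$), with parts (i) and (ii) transporting likewise. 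Your argument buys a shorter, symmetry-based derivation that eliminates the risk of a hidden asymmetry in an argument left implicit; the paper's ``similarly'' would buy nothing beyond what your reduction already supplies. The one thing worth adding explicitly is the observation that the reflection also preserves hypothesis (A4), which you state but should record as part of the verification that Theorem \ref{th1.2-n1} is applicable to the transformed problem.
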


Finally, we consider the case
\begin{equation}\label{nonreg-3}
 \hbar_1\ell_1<0,\ \ \ \hbar_2\ell_2<0,
  \end{equation}
and can obtain the following result.

\begin{theorem}\label{th1.2-n3} Assume that {\rm (A4)} and \eqref{nonreg-3} hold. Then we have the following assertions.

 \begin{itemize}

\item[(i)] $\lim\limits_{\alpha\to\infty}\lambda_1(\alpha)=-\infty$ if and only if
$\mathcal{M}=\mathcal{M}_1\subset\{0,1\}$.

\item[(ii)] Otherwise, $-\infty<\lim\limits_{\alpha\to\infty}\lambda_1(\alpha)<\infty$ and
 \begin{equation}
 \left.\begin{array}{lll}
 \medskip
 \displaystyle
 \lim\limits_{\alpha\to\infty}\lambda_1(\alpha)&=&
 \medskip
 \displaystyle
 \min\Big\{\mathfrak{L},\ \ \min\{V(x):\ x\in\mathcal{M}_1\setminus\{0,\,1\}\},\ \ \lambda_1^{\mathcal{RD}}(0,a^I)\ (\mbox{if}\ [0,a^I]\in\mathcal{M}_6),\\
 &&\ \ \ \ \ \ \
 \medskip
 \displaystyle
 \lambda_1^{\mathcal{RN}}(0,a^D) \ (\mbox{if}\ [0,a^D]\in\mathcal{M}_7),\ \ \lambda_1^{\mathcal{NR}}(a^I,1)\ (\mbox{if}\ [a^I,1]\in\mathcal{M}_8),\\
 &&\ \ \ \ \ \ \
 \medskip
 \displaystyle
 \lambda_1^{\mathcal{DR}}(a^D,1)\ (\mbox{if}\ [a^D,1]\in\mathcal{M}_9)\, \Big\}.
 \end{array}
 \right.
 \nonumber
 \end{equation}

\end{itemize}
\end{theorem}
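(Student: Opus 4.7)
The proof of Theorem \ref{th1.2-n3} follows the template of Theorems \ref{th1.2-n1} and \ref{th1.2-n2}, now with both endpoints lying in $\partial\Omega^{-}$. Under \eqref{nonreg-3} one has $\beta(0)=\ell_1/\hbar_1<0$ and $\beta(1)=\ell_2/\hbar_2<0$, so the boundary integral in the variational characterization \eqref{eq:1.2} contributes negatively at both $0$ and $1$. Setting $w=e^{\alpha m}\varphi$ with $\int_{0}^{1}w^{2}\,dx=1$ brings \eqref{p} into the self-adjoint form on which the machinery of Section 2 (notably Lemmas \ref{upperbound}, \ref{unbounded}, \ref{lemma-trace} and \ref{lowerbound}) can be deployed in its one-dimensional guise.

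\textbf{Proof of (i) and the upper bound in (ii).} The $(\Leftarrow)$ direction of (i) is immediate: if $\mathcal{M}=\mathcal{M}_{1}\subset\{0,1\}$, every element of $\mathcal{M}_{1}$ is an isolated local maximum at a point where $\beta<0$, so $\Sigma_{3}=\mathcal{M}_{1}\neq\emptyset$ and Lemma \ref{unbounded} yields $\lim\lambda_{1}(\alpha)=-\infty$; the $(\Rightarrow)$ direction is the contrapositive of (ii). For the upper bound in (ii) we insert suitable test functions into the first form of \eqref{eq:1.2}: for $x_{0}\in\mathcal{M}_{1}\setminus\{0,1\}$ the bump of Lemma \ref{upperbound} gives $\limsup\lambda_{1}(\alpha)\leq V(x_{0})$; for an interior segment $[a,b]\in\mathcal{M}_{2}\cup\cdots\cup\mathcal{M}_{5}$ the principal eigenfunction of the corresponding subdomain problem (with Neumann or Dirichlet conditions dictated by the monotonicity types of $m$ at $a$ and $b$) extended by an exponentially decaying tail outside $[a,b]$ yields $\limsup\leq\lambda_{1}^{ij}(a,b)$, and taking the minimum over such segments produces $\mathfrak{L}$; for boundary segments in $\mathcal{M}_{6},\dots,\mathcal{M}_{9}$ analogous constructions that retain the original Robin condition at $0$ or $1$ produce $\lambda_{1}^{\mathcal{RD}}(0,a^{I})$, $\lambda_{1}^{\mathcal{RN}}(0,a^{D})$, $\lambda_{1}^{\mathcal{NR}}(a^{I},1)$ or $\lambda_{1}^{\mathcal{DR}}(a^{D},1)$, respectively. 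The minimum of all these upper bounds equals the stated $\limsup$.

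\textbf{Lower bound in (ii) and main obstacle.} Normalize $w_{\alpha}=e^{\alpha m}\varphi_{\alpha}$ with $\int w_{\alpha}^{2}=1$ and extract a weak limit $w_{\alpha_{k}}^{2}\rightharpoonup\mu$ with $\mu([0,1])=1$. A one-dimensional version of the trace inequality (Lemma \ref{lemma-trace})---whose vector field $\nu$ at $0$ one takes as $\nu\equiv-1$ when $m$ is non-decreasing on some $[0,\delta]$, and analogously at $1$---lets us absorb the negative boundary contribution and derive $\int_{0}^{1}(w_{\alpha}'-\alpha w_{\alpha}m')^{2}\,dx\leq M$ uniformly in $\alpha$. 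Repeating the concentration analysis of Theorems \ref{nonnegative} and \ref{general} (specialized to $N=1$) then yields $\mathrm{supp}(\mu)\subset\mathcal{M}\setminus(\mathcal{M}_{1}\cap\{0,1\})$, and decomposing $\mu$ over each surviving local maximum together with the eigenfunction characterization on each piece produces the required lower bound $\liminf\lambda_{1}(\alpha)\geq$ the stated minimum. The principal technical obstacle is the degenerate boundary configurations $\mathcal{M}_{7}=\{[0,a^{D}]\}$ and $\mathcal{M}_{9}=\{[a^{D},1]\}$, where $m$ is constant on a boundary segment and strictly decreasing just inside: here no vector field $\nu$ satisfies both $\nu\cdot n\geq\delta$ and $\nu\cdot m'\leq 0$ near the negative boundary, and Lemma \ref{lemma-trace} is not directly applicable. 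The workaround is to ``shift'' the effective negative boundary to the interior endpoint $a^{D}$ of the flat segment, where $m$ does become strictly monotone inward; this is possible because $e^{2\alpha m}$ is essentially constant on $[0,a^{D}]$, which makes the mass of $\mu$ at $0$ and at $a^{D}$ comparable, and it is precisely this comparability that yields $\lambda_{1}^{\mathcal{RN}}(0,a^{D})$ as the correct contribution rather than $-\infty$.
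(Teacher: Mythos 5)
Your overall strategy---reduce via $w=e^{\alpha m}\varphi$, control the negative boundary terms with the trace inequality of Lemma~\ref{lemma-trace}, extract a weak limit $\mu$ and localize its support, and use Lemma~\ref{unbounded} for the $(\Leftarrow)$ direction of (i)---mirrors the paper's treatment of Theorem~\ref{th1.2-n1}, to which the paper simply defers for Theorem~\ref{th1.2-n3}. However, your identification of the ``principal technical obstacle'' for $\mathcal{M}_7=\{[0,a^D]\}$ and $\mathcal{M}_9=\{[a^D,1]\}$ is factually wrong. On $[0,a^D]\in\mathcal{M}_7$ the function $m$ is \emph{constant}, so $m'\equiv 0$ there; in dimension one, taking $\nu\equiv -1$ near $x=0$ gives $\nu\cdot n(0)=(-1)(-1)=1\ge\delta$ and $\nu\,m'=0\le 0$ on $[0,\delta')$ for any $\delta'<a^D$, which is exactly the hypothesis of Lemma~\ref{lemma-trace}. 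The trace inequality therefore applies directly (and similarly at $x=1$ for $\mathcal{M}_9$ with $\nu\equiv 1$). Your ``shift the effective boundary to $a^D$'' workaround is thus unnecessary; moreover, as written it does not establish the contribution $\lambda_1^{\mathcal{RN}}(0,a^D)$ anyway, since mere comparability of $\mu(\{0\})$ and $\mu(\{a^D\})$ is far weaker than the eigenvalue identification, which requires the careful asymptotics of $w_\alpha$ on the flat segment carried out in \cite[Theorem 1.2]{PZ2018}, to which the paper appeals.

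There is also an unflagged gap in the scope of your ``Otherwise'' argument in (ii). You claim the concentration analysis yields $\mathrm{supp}(\mu)\subset\mathcal{M}\setminus(\mathcal{M}_1\cap\{0,1\})$ together with a uniform bound on $\int_0^1(w_\alpha'-\alpha w_\alpha m')^2\,dx$ whenever $\mathcal{M}\ne\mathcal{M}_1\subset\{0,1\}$. But that range still contains configurations in which $0$ or $1$ is an isolated point of local maximum coexisting with interior local maxima---take $m(x)=\cos 4\pi x$, for which $\mathcal{M}_1=\{0,\tfrac12,1\}$. In that case $\Sigma_3\supseteq\{0\}$ under \eqref{nonreg-3}, Lemma~\ref{unbounded} still forces $\lambda_1(\alpha)\to -\infty$, and the trace inequality genuinely fails at the endpoint (there $m'\le 0$, with strict inequality somewhere, in every right-neighborhood of $0$, so no admissible $\nu$ exists). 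The paper's proof of Theorem~\ref{th1.2-n1}(iii) recognizes exactly this by requiring, before invoking Lemma~\ref{lowerbound}, that the relevant endpoint not be an isolated local maximum; this is also consistent with Remark~\ref{r-result}(ii), which phrases the $-\infty$ criterion as ``$0$ or $1$ is an isolated local maximum.'' Your proposal does not isolate this case, which is where the real difficulty lies, while spending effort on the $\mathcal{M}_7,\mathcal{M}_9$ configurations where there is in fact no obstacle.
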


\begin{remark}\label{r-result} Regarding Theorem \ref{theorem-R} and Theorems \ref{th1.2-n1}-\ref{th1.2-n3}, we would like to make the following comments.

\begin{itemize}

\item[(i)] Once $\lim\limits_{\alpha\to\infty}\lambda_1(\alpha)$ is finite, it can be given by Theorem \ref{theorem-R}(i), Theorem \ref{th1.2-n1}(iii), Theorem \ref{th1.2-n2}(iii) and Theorem \ref{th1.2-n3}(ii), which exhaust all the possibilities of the parameters $\hbar_i,\,\ell_i\, (i=1,2)$. Moreover, we observe that if $0$ or $1$ is an isolated point of local maximum of $m$, $\lim\limits_{\alpha\to\infty}\lambda_1(\alpha)$, when finite, is affected by such a boundary point only if the Neumann boundary condition is prescribed there.

\item[(ii)] The limit $\lim\limits_{\alpha\to\infty}\lambda_1(\alpha)=-\infty$ only if either $\hbar_1\ell_1<0$ or $\hbar_2\ell_2<0$. Precisely speaking, when $\hbar_1\ell_1<0$, $\lim\limits_{\alpha\to\infty}\lambda_1(\alpha)=-\infty$ if and only if $0$ is an isolated local maximum of $m$ (see Theorem \ref{th1.2-n1}(ii)); when $\hbar_2\ell_2<0$, $\lim\limits_{\alpha\to\infty}\lambda_1(\alpha)=-\infty$ if and only if $1$ is an isolated local maximum of $m$ (see Theorem \ref{th1.2-n2}(ii)); when $\hbar_1\ell_1<0$ and $\hbar_2\ell_2<0$, $\lim\limits_{\alpha\to\infty}\lambda_1(\alpha)=-\infty$ if and only if $1$ or $1$ is an isolated local maximum of $m$ (see Theorem \ref{th1.2-n3}(i)). It should be stressed here that all these assertions remain valid without the assumption (A4).

\item[(iii)] Whether $\lim\limits_{\alpha\to\infty}\lambda_1(\alpha)=\infty$ or not depends on the sign $\hbar_1\ell_1,\,\hbar_2\ell_2$ and the monotonicity of $m$. More precisely, we can conclude the following statements from Theorem \ref{theorem-R} and Theorems \ref{th1.2-n1}-\ref{th1.2-n3}.

\begin{itemize}

\item[(iii-1)] If $\hbar_1\ell_1\geq0,\,\hbar_2\ell_2\geq0$ and $\ell_1,\,\ell_2>0$, then $\lim\limits_{\alpha\to\infty}\lambda_1(\alpha)=\infty$ if and only if
either $m$ increases or decreases on $[0,1]$, or $m$ decreases on $[0,\theta]$ while increases on $[\theta,1]$ for some $\theta\in(0,1)$.

\item[(iii-2)] If either $\hbar_1\ell_1<0,\,\hbar_2\ell_2\geq0$, $\ell_2\not=0$ or $\ell_1=0,\,\ell_2>0$, then $\lim\limits_{\alpha\to\infty}\lambda_1(\alpha)=\infty$ if and only if $m$ increases on $[0,1]$.

\item[(iii-3)] If either $\hbar_1\ell_1\geq0,\,\hbar_2\ell_2<0$, $\ell_1\not=0$ or $\ell_1>0,\,\ell_2=0$, then $\lim\limits_{\alpha\to\infty}\lambda_1(\alpha)=\infty$ if and only if $m$ decreases on $[0,1]$.

\end{itemize}

\end{itemize}

\end{remark}

Before presenting the proof of Theorems \ref{th1.2-n1}-\ref{th1.2-n3}, we would like to demonstrate, through several concrete examples, the impact of the boundary critical points of the advection function $m$ and the boundary conditions on the asymptotic behavior of the principal eigenvalue $\lambda_1(\alpha)$. We will mainly focus on the situation where the degeneracy of $m$ occurs near the left boundary point $x=0$ since similar behaviors happen near the right boundary point $x=1$. For the impact of interior critical points of $m$ on the asymptotic behavior of $\lambda_1(\alpha)$, one may refer to \cite{PZ2018} for some typical examples.

To emphasize the dependence of $\lim_{\alpha\to\infty}\lambda_1(\alpha)$ on the sign of the boundary parameters $\hbar_i,\, \ell_i\,(i=1,2)$ as well as for convenience, instead of $\lim_{\alpha\to\infty}\lambda_1(\alpha)$, below we will use $\lambda_1^{a_i,a_j}(\infty)\,(i=1,2)$  with $a_i\,(i=1,2)$ given by
\begin{equation*}
  a_{i}=+\ \text{ if } \, \hbar_i\ell_i>0 \  \text{or}\  \hbar_i=0;\ \
    a_{i}=0 \ \text{ if }\ \ell_i=0;\ \
     a_{i}=- \ \text{ if }\ \hbar_i\ell_i<0.
 \end{equation*}

Example 1: $m$ is constant on $[0,x_1]$ and increases on $[x_1,1]$; see Figure 1(a).
Let $x_1$ shrink to the boundary point $0$ so that $m$ increases on $[0,1]$; see Figure 1(b).
\begin{figure}[htbp]\label{figure1}
\centering
{\includegraphics[height=1.1in]{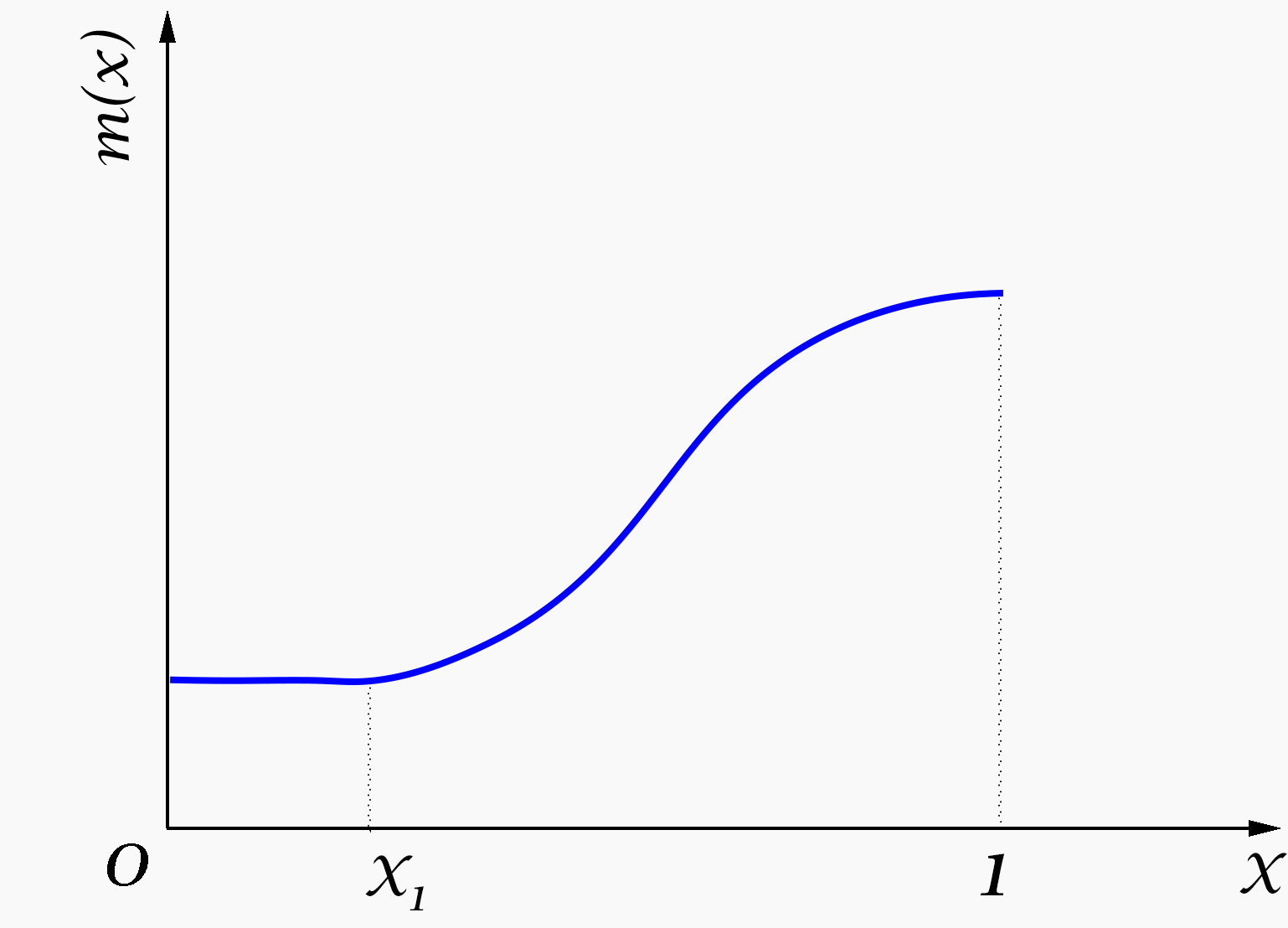}}\ \ \
{\includegraphics[height=1.1in]{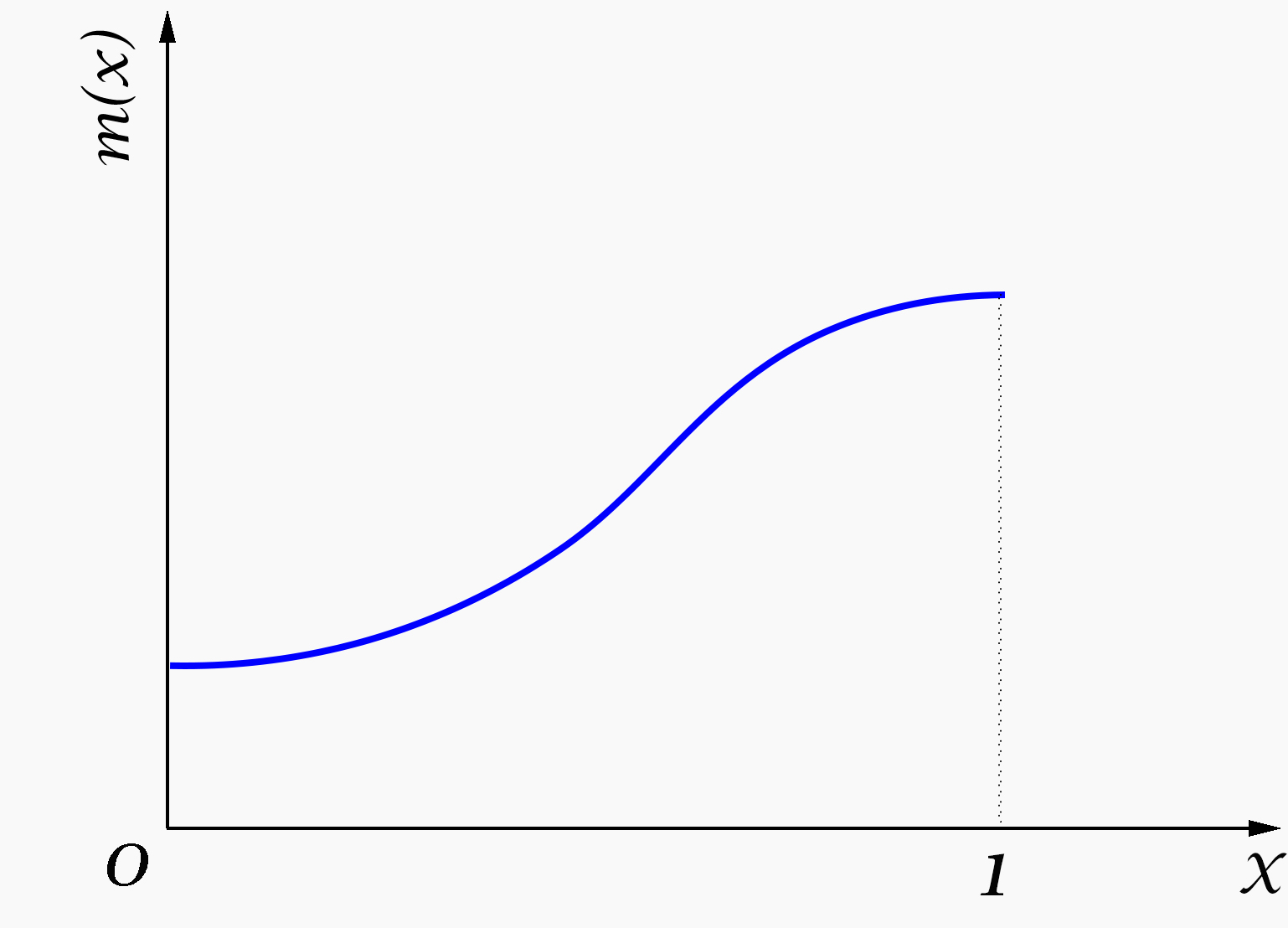}}\ \ \
{\includegraphics[height=1.1in]{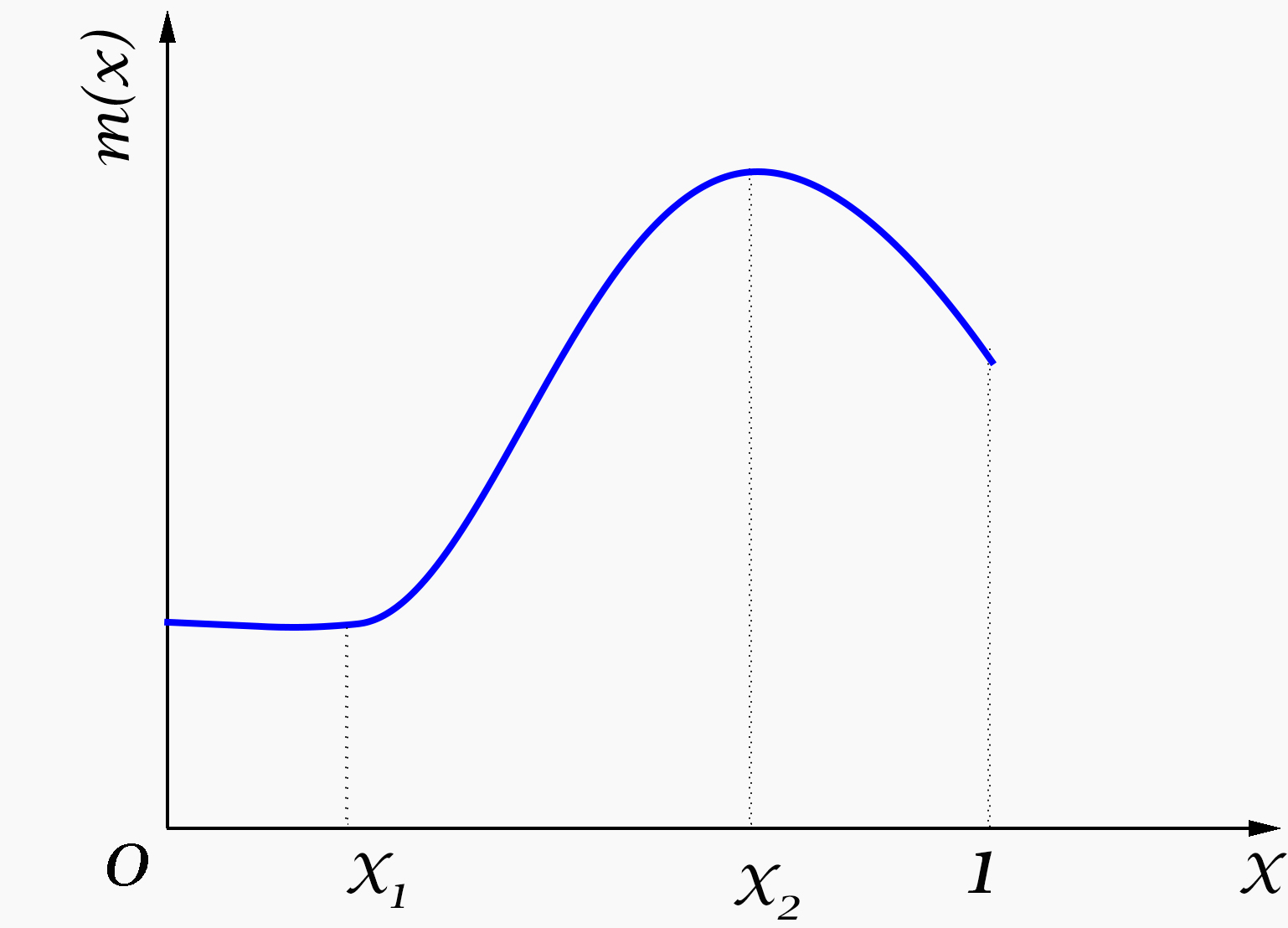}}\ \ \
{\includegraphics[height=1.1in]{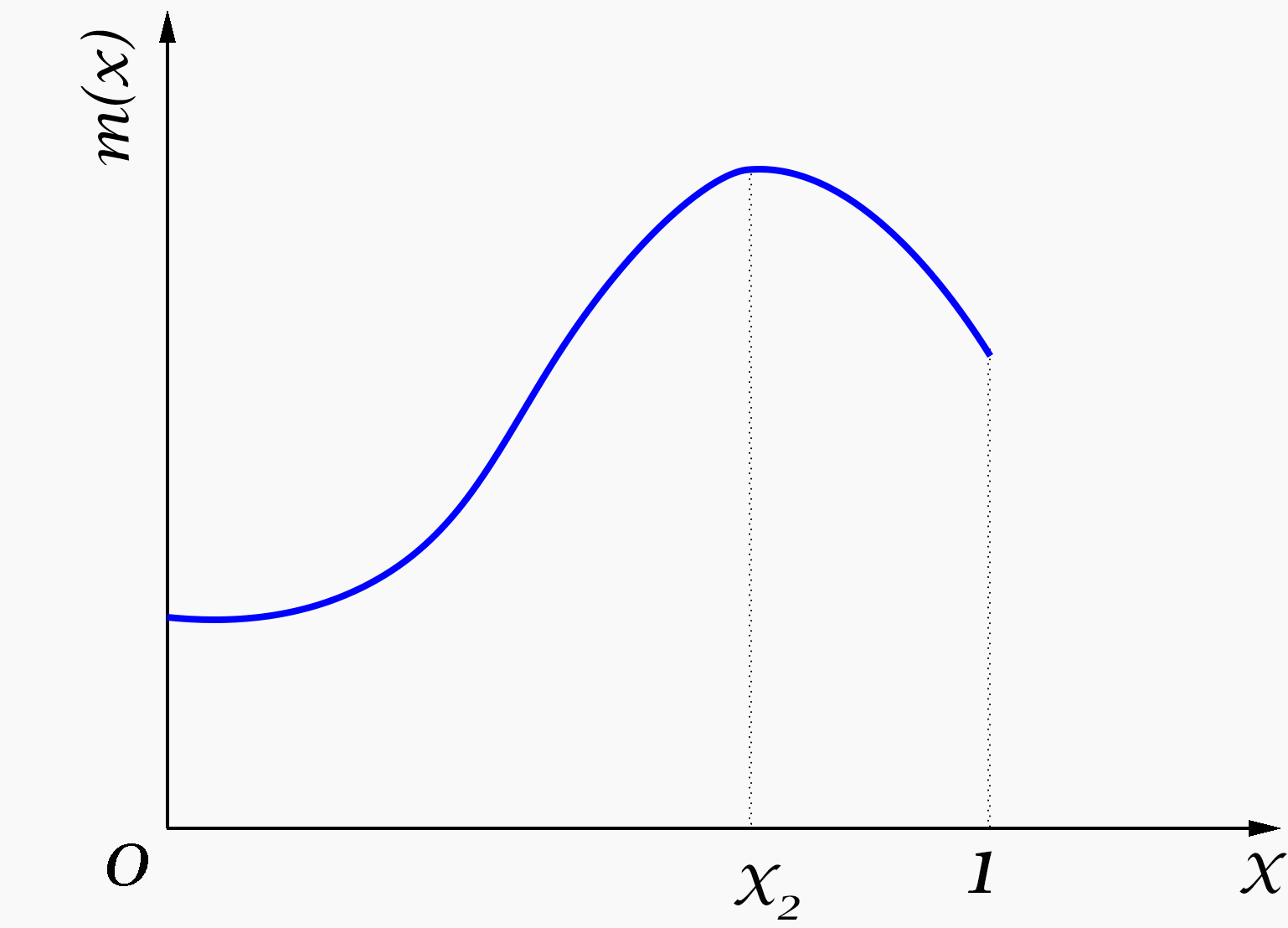}}

{Figure 1(a) \hspace{1.7cm} Figure 1(b)\hspace{1.7cm} Figure 2(a)\hspace{1.7cm} Figure 2(b)}
\end{figure}
According to the results of this section, if $m$ is given as in Figure 1(a), we have the following conclusions:
 \begin{equation}\nonumber
 \left.\begin{array}{llll}
 \lambda_1^{+,+}=\lambda_1^{\mathcal{RD}}(0,x_1);\ &
 \lambda_1^{+,0}=\min\{\lambda_1^{\mathcal{RD}}(0,x_1),\,V(1)\};\ &
 \lambda_1^{0,0}=\min\{\lambda_1^{\mathcal{RD}}(0,x_1),\,V(1)\}; \\
 \lambda_1^{0,-}=-\infty;\ &
 \lambda_1^{-,0}=\min\{\lambda_1^{\mathcal{RD}}(0,x_1),\,V(1)\};&
 \lambda_1^{-,-}=-\infty.
 \end{array}
 \right.
 \end{equation}
It should be pointed out that the values $\lambda_1^{\mathcal{RD}}(0,x_1)$ appearing above are different because they depend on the boundary conditions at $x=0$. Hereafter, we shall use the same notation whenever no confusion is caused.

If $m$ is given as in Figure 1(b), we have
  \begin{equation}\nonumber
\lambda_1^{+,+}=\infty;\ \
\lambda_1^{+,0}=\lambda_1^{0,0}=\lambda_1^{-,0}=V(1);\ \
\lambda_1^{0,-}=\lambda_1^{-,-}=-\infty.
 \end{equation}
Notice that when the point $x_1$ shrinks to the boundary point $0$, all the values $\lambda_1^{\mathcal{RD}}(0,x_1)$ converge to $\infty$. Thus, the asymptotic behavior of $\lim_{\alpha\to\infty}\lambda_1(\alpha)$ changes continuously when $m$ changes continuously from Figure 1(a) to Figure 1(b).

\vskip5pt
Example 2: $m$ is constant on $[0,x_1]$, increases on $[x_1,x_2]$ and decreases on $[x_2,1]$; see Figure 2(a). Let $x_1$ shrink to the boundary point $0$ so that $m$ increases on $[0,x_1]$ and decreases on $[x_2,1]$; see Figure 2(b). If $m$ is given as in Figure 2(a), we have the following conclusions:
 \begin{equation}\nonumber
 \left.\begin{array}{llll}
 \lambda_1^{+,+}=\min\{\lambda_1^{\mathcal{RD}}(0,x_1),\,V(x_2)\};\ &
 \lambda_1^{+,0}=\min\{\lambda_1^{\mathcal{RD}}(0,x_1),\,V(x_2)\};\ &
 \lambda_1^{0,0}=\min\{\lambda_1^{\mathcal{RD}}(0,x_1),\,V(x_2)\}; \\
 \lambda_1^{0,-}=\min\{\lambda_1^{\mathcal{RD}}(0,x_1),\,V(x_2)\};\ &
 \lambda_1^{-,0}=\min\{\lambda_1^{\mathcal{RD}}(0,x_1),\,V(x_2)\};&
 \lambda_1^{-,-}=\min\{\lambda_1^{\mathcal{RD}}(0,x_1),\,V(x_2)\}.
 \end{array}
 \right.
 \end{equation}
If $m$ is given as in Figure 2(b), we have
  \begin{equation}\nonumber
\lambda_1^{+,+}=\lambda_1^{+,0}=\lambda_1^{0,0}=\lambda_1^{-,0}=\lambda_1^{0,-}=\lambda_1^{-,-}=V(x_2).
 \end{equation}
Notice that when the point $x_1$ shrinks to the boundary point $0$, all the values $\lambda_1^{\mathcal{RD}}(0,x_1)$ converge to $\infty$. Again, the asymptotic behavior of $\lim_{\alpha\to\infty}\lambda_1(\alpha)$ changes continuously when $m$ changes continuously from Figure 2(a) to Figure 2(b).

\vskip5pt
Example 3: $m$ is constant on $[0,x_1]$ and decreases on $[x_1,1]$; see Figure 3(a). Let $x_1$ shrink to the boundary point $0$ so that $m$ decreases on $[0,1]$; see Figure 3(b).
\begin{figure}[htbp]\label{figure2}
\centering
{\includegraphics[height=1.1in]{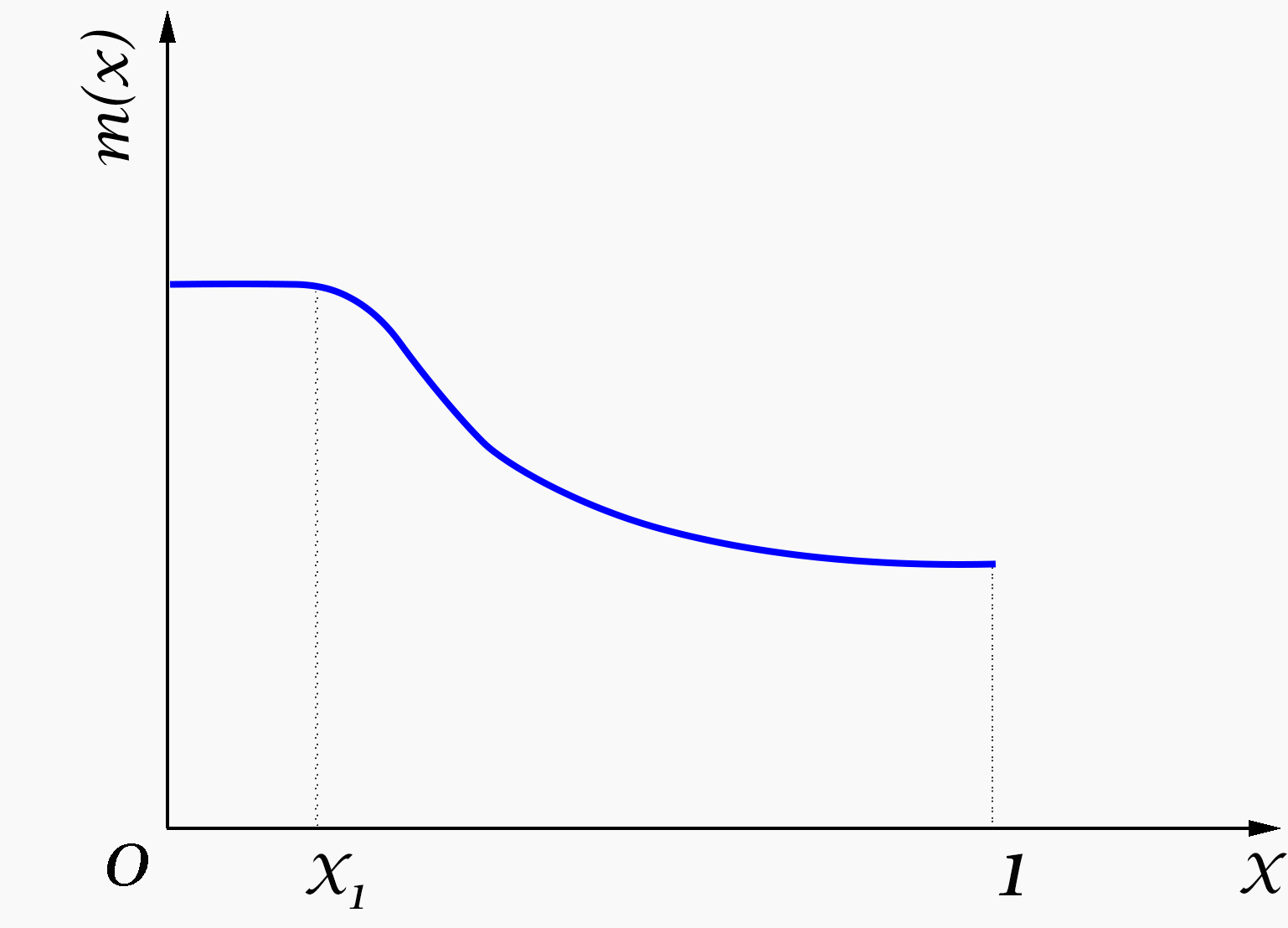}}\ \ \
{\includegraphics[height=1.1in]{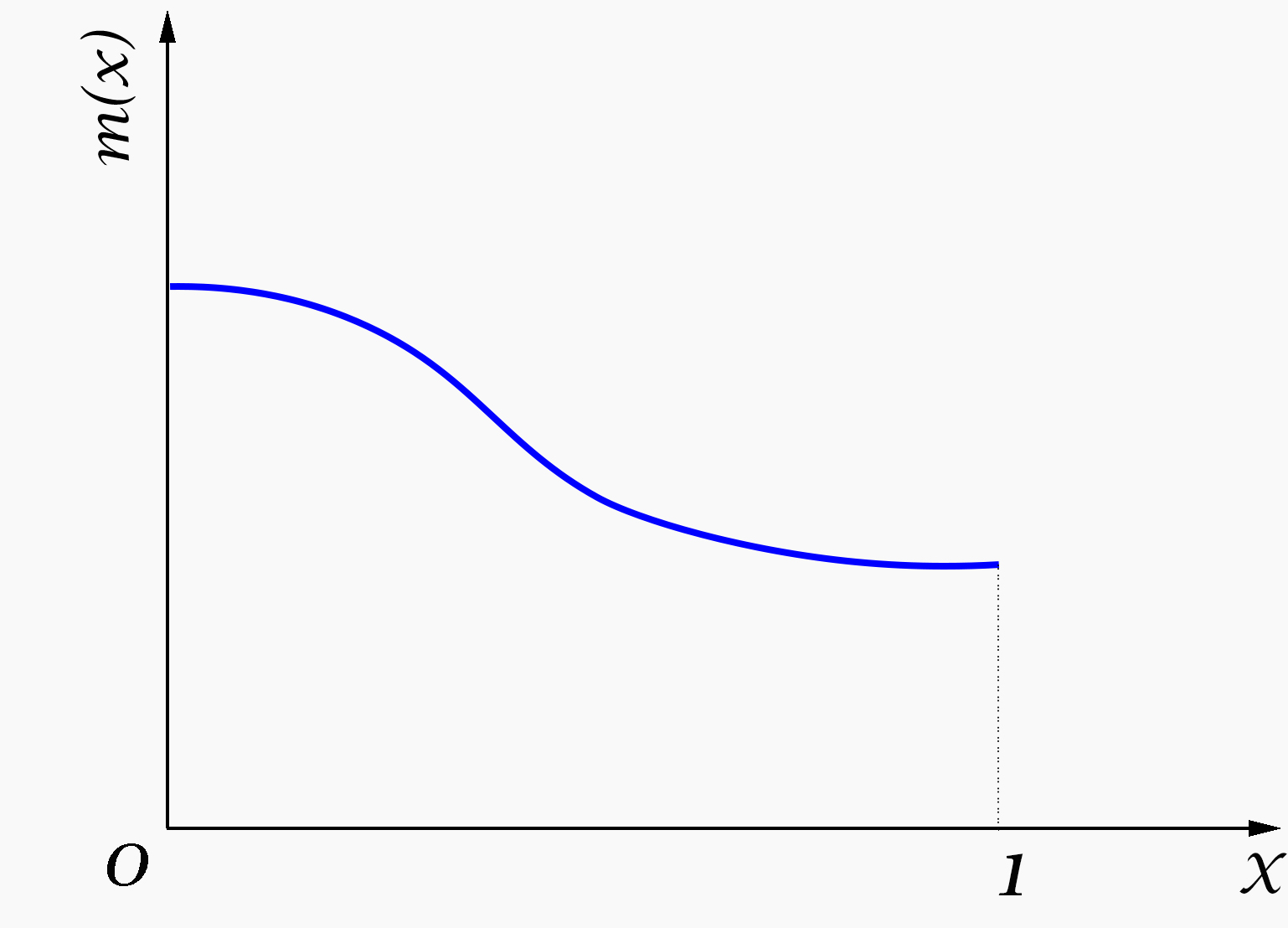}}\ \ \
{\includegraphics[height=1.1in]{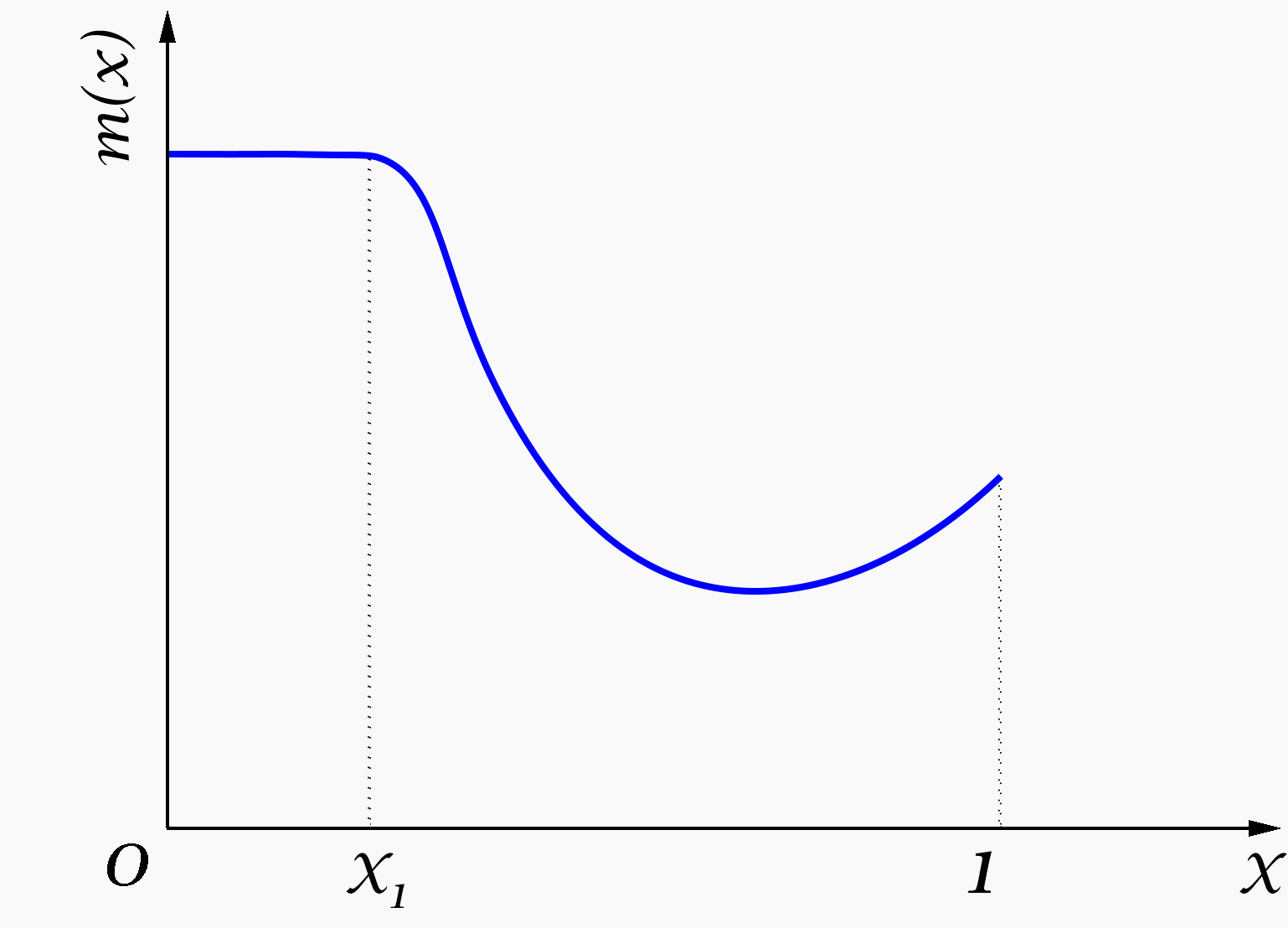}}\ \ \
{\includegraphics[height=1.1in]{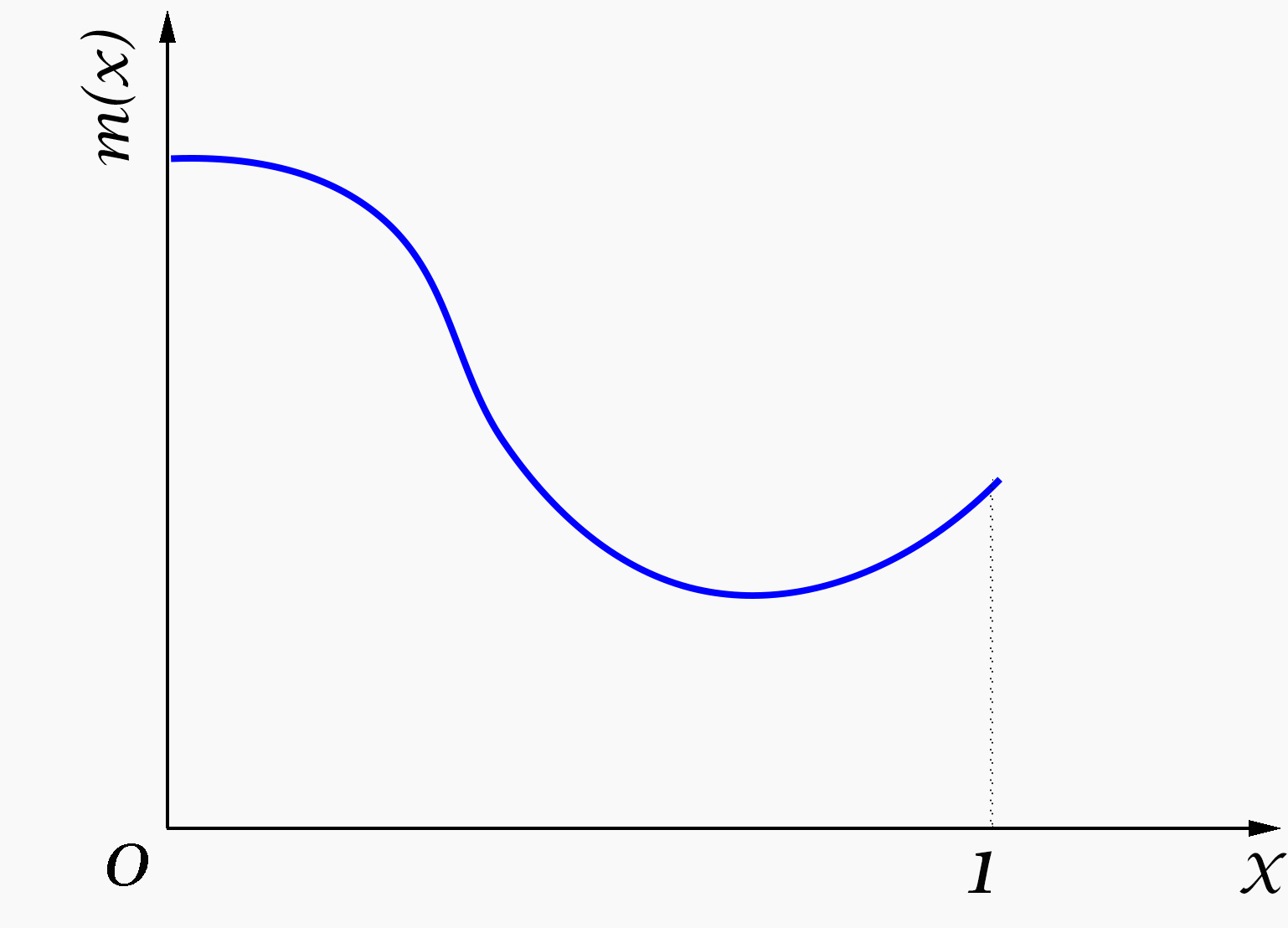}}

{Figure 3(a) \hspace{1.7cm} Figure 3(b)\hspace{1.7cm} Figure 4(a)\hspace{1.7cm} Figure 4(b)}
\end{figure}
If $m$ is given as in Figure 3(a), we have
 \begin{equation}\label{f3-4}
 \left.\begin{array}{llll}
 \lambda_1^{+,+}=\lambda_1^{\mathcal{RD}}(0,x_1);\ &\ \ \
 \lambda_1^{+,0}=\lambda_1^{\mathcal{RD}}(0,x_1);\ &\ \ \
 \lambda_1^{0,0}=\lambda_1^{\mathcal{RD}}(0,x_1); \\
 \lambda_1^{0,-}=\lambda_1^{\mathcal{RD}}(0,x_1);\ &\ \ \
 \lambda_1^{-,0}=\lambda_1^{\mathcal{RD}}(0,x_1);&\ \ \
 \lambda_1^{-,-}=\lambda_1^{\mathcal{RD}}(0,x_1).
 \end{array}
 \right.
 \end{equation}
If $m$ is given as in Figure 3(b), we have
  \begin{equation}\nonumber
\lambda_1^{+,+}=\lambda_1^{+,0}=\infty;\ \
\lambda_1^{0,0}=\lambda_1^{0,-}=V(0);\ \
\lambda_1^{-,0}=\lambda_1^{-,-}=-\infty.
 \end{equation}
Note that in \eqref{f3-4}, as $x_1$ shrinks to the boundary point $0$, it is easily checked that $\lambda_1^{+,+}=\lambda_1^{\mathcal{RD}}(0,x_1)\to\infty,\,\lambda_1^{+,0}=\lambda_1^{\mathcal{RD}}(0,x_1)\to\infty$, $\lambda_1^{0,0}=\lambda_1^{\mathcal{RD}}(0,x_1)\to V(0)$,
$\lambda_1^{0,-}=\lambda_1^{\mathcal{RD}}(0,x_1)\to V(0)$, while
$\lambda_1^{-,0}=\lambda_1^{\mathcal{RD}}(0,x_1)\to-\infty,\,
\lambda_1^{-,-}=\lambda_1^{\mathcal{RD}}(0,x_1)\to-\infty$. Thus, in a certain sense, the asymptotic behavior of $\lim_{\alpha\to\infty}\lambda_1(\alpha)$ changes continuously when $m$ changes continuously from Figure 3(a) to Figure 3(b).

\vskip5pt
Example 4: $m$ is constant on $[0,x_1]$, decreases on $[x_1,x_2]$ for some $x_2\in(x_1,1)$ and increases on $[x_2,1]$; see Figure 4(a). Let $x_1$ shrink to the boundary point $0$ so that $m$ decreases on $[0,x_2]$ and increases on $[x_2,1]$; see Figure 4(b). If $m$ is given as in Figure 4(a), we have
 \begin{equation}\nonumber
 \left.\begin{array}{llll}
 \lambda_1^{+,+}=\lambda_1^{\mathcal{RD}}(0,x_1);\ &
 \lambda_1^{+,0}=\min\{\lambda_1^{\mathcal{RD}}(0,x_1),\,V(1)\};\ &
 \lambda_1^{0,0}=\min\{\lambda_1^{\mathcal{RD}}(0,x_1),\,V(1)\}; \\
 \lambda_1^{0,-}=-\infty;\ &
 \lambda_1^{-,0}=\min\{\lambda_1^{\mathcal{RD}}(0,x_1),\,V(1)\};&
 \lambda_1^{-,-}=-\infty.
 \end{array}
 \right.
 \end{equation}
If $m$ is given as in Figure 4(b), we have
  \begin{equation}\nonumber
\lambda_1^{+,+}=\infty;\ \
\lambda_1^{+,0}=\lambda_1^{0,0}=\lambda_1^{-,0}=V(1);\ \
\lambda_1^{0,-}=\lambda_1^{-,-}=-\infty.
 \end{equation}
 Once again, it is easily seen that in a certain sense, the asymptotic behavior of $\lim_{\alpha\to\infty}\lambda_1(\alpha)$ changes continuously when $m$ changes continuously from Figure 4(a) to Figure 4(b).

\vskip8pt
In what follows, we only give the proof of Theorem \ref{th1.2-n1} since Theorem \ref{th1.2-n2} and Theorem \ref{th1.2-n3} can be verified similarly.

\begin{proof}[Proof of Theorem \ref{th1.2-n1}] As before, through the transformation
$w(x)=e^{\alpha m(x)}\varphi(x)$, problem \eqref{p} reduces to the following one:
 \begin{equation}
 \left\{\begin{array}{ll}
 \medskip
 \displaystyle
 -w''(x)+[\alpha^2(m'(x))^2+\alpha m''(x)+V(x)]w(x)=\lambda_1(\alpha)w(x),\ 0<x<1,\\
 \displaystyle
 (\alpha\hbar_1+\ell_1)w(0)-\hbar_1w'(0)=(-\alpha\hbar_2+\ell_2)w(1)+\hbar_2w'(1)=0.
 \end{array}
 \right.
 \label{4.1d}
  \end{equation}
For each $\alpha$, when $\hbar_2\not=0$, $\lambda_1(\alpha)$ has the variational characterization:
 \begin{equation}
 \left.\begin{array}{lll}
 \medskip
 \displaystyle
 \lambda_1(\alpha)
 &=&
 \displaystyle
 \min_{w\in H^1(0,1),\,\int_0^1 w^2dx=1}\Big\{\int_0^1[(w'-\alpha wm')^2+Vw^2]dx+{{\ell_1}\over{\hbar_1}}w^2(0)+{{\ell_2}\over{\hbar_2}}w^2(1)\Big\},
 \end{array}
 \right.
\label{4.1e}
  \end{equation}
and when $\hbar_2=0$, $\lambda_1(\alpha)$ admits the variational characterization:
 \begin{equation}
 \left.\begin{array}{lll}
 \medskip
 \displaystyle
 \lambda_1(\alpha)
 &=&
 \displaystyle
 \min_{w\in H^1_*(0,1),\, \int_0^1 w^2dx=1}\int_0^1[(w'-\alpha wm')^2+Vw^2]dx+{{\ell_1}\over{\hbar_1}}w^2(0),
 \end{array}
 \right.
\label{4.1f}
  \end{equation}
where $H^1_*(0,1)=\{g\in H^1(0,1):\, g(1)=0\}$.

As before, we shall use $w_\alpha$ with $\int_0^1w_\alpha^2(x)dx=1$ to
denote the principal eigenfunction of \eqref{4.1d}, and assume that, as $\alpha\to\infty$,
$w_\alpha^2$ converges weakly to a unique Radon measure $\mu$ with $\mu([0,1])=1$.

We first assume that $\ell_2\not=0$ and $\mathcal{M}=\mathcal{M}_1=\{1\}$, and claim $\lim\limits_{\alpha\to\infty}\lambda_1(\alpha)=\infty$. Obviously, the condition $\mathcal{M}=\mathcal{M}_1=\{1\}$ implies that the function advection $m$ must be increasing on $[0,1]$, and so $m'(x)\geq0$ for all $x\in[0,1]$.

To show $\lim\limits_{\alpha\to\infty}\lambda_1(\alpha)=\infty$, we proceed indirectly and suppose that there exists a sequence of $\alpha$, labelled by itself for convenience, such that $\lambda_1(\alpha)\leq C$
for some constant $C>0$ and $\alpha\geq0$. Hereafter, the positive constant $C$ may change from place to place but does not depend on $\alpha\geq0$.

As a result, it follows from \eqref{nonreg-1}, \eqref{4.1e} and \eqref{4.1f} that
\begin{equation}\label{4.1g}
\int_0^1[(w_\alpha'-\alpha w_\alpha m')^2+Vw_\alpha^2]dx+{{\ell_1}\over{\hbar_1}}w_\alpha^2(0)\leq C,\ \ \forall \alpha\geq0.
  \end{equation}
On the other hand, from the proof of Theorem \ref{general} (see \eqref{ineq-z4}), we obtain
\begin{equation}\label{4.1h}
-{{\ell_1}\over{\hbar_1}}w_\alpha^2(0)\leq\frac{1}{2}\int_0^1[(w_\alpha'-\alpha w_\alpha m')^2dx+C,\ \ \forall \alpha\geq0.
  \end{equation}
Thus, because of $\ell_1\hbar_1<0$, we infer that
\begin{equation}\label{4.1i}
-{{\ell_1}\over{\hbar_1}}w_\alpha^2(0)\leq C,\ \ \int_0^1[(w_\alpha'-\alpha w_\alpha m')^2dx\leq C,\ \ \forall \alpha\geq0.
  \end{equation}
Combined with \eqref{4.1i} and the fact that $m$ is increasing on $[0,1]$, one can use the same analysis as in the proofs of \cite[Lemma 2.6 and Lemma 2.8]{PZ2018} to conclude that $\mu([0,1))=0$.
Hence, $\mu(\{1\})=1$, which, together with $\int_0^1w_\alpha^2(x)dx=1$ for all $\alpha$, enables us to
conclude that
$$
\limsup_{\alpha\to\infty}\|w_\alpha(\cdot)\|_{L^\infty((0,1))}=\infty.
$$
By virtue of this and \eqref{4.1i}, an argument similar to that of \cite[Lemma 2.7]{PZ2018} can be employed to lead to a contradiction. Therefore, $\lim\limits_{\alpha\to\infty}\lambda_1(\alpha)=\infty$ must hold.

We then claim $\lim\limits_{\alpha\to\infty}\lambda_1(\alpha)=-\infty$ under the assumption $\mathcal{M}=\mathcal{M}_1=\{0\}$. Clearly, $\ell_1\hbar_1<0$, $\mathcal{M}=\mathcal{M}_1=\{0\}$ and \eqref{nonreg-1} imply $\Sigma_{3}=\{0\}$. Thus, Lemma \ref{unbounded} gives $\lim\limits_{\alpha\to\infty}\lambda_1(\alpha)=-\infty$, as desired.

If $\ell_2=0$, it is easily seen from \eqref{4.1e} that $\lambda_1(\alpha)\leq\max_{x\in[0,1]}V(x)$ for all $\alpha\geq0$.

If $\ell_2\not=0$ and $1$ is not an isolated local maximum of $m$, then by (A4), either $m$ is constant on $[1-a_0,1]$ or $m$ is decreasing on $[1-a_0,1]$ for some small $a_0>0$. In view of $\ell_1\hbar_1<0$, the analysis of \cite[Lemma 2.1]{PZ2018} can be used to show that $\limsup\limits_{\alpha\to\infty}\lambda_1(\alpha)<\infty$. This shows that $\lambda_1(\alpha)\leq C$ for all $\alpha\geq0$ when either $\ell_2=0$ or $1$ is not an isolated maximum of $m$. Thus, using \eqref{4.1g} and \eqref{4.1h}, we see that \eqref{4.1i} holds.

On the other hand, if $\mathcal{M}\not=\{0\}$, then by (A4), either $m$ is increasing on $[0,b_0]$ or $m$ is constant on $[0,b_0]$ for some small $b_0>0$. In each case, by Lemma \ref{lowerbound}, we know that
$\liminf\limits_{\alpha\to\infty}\lambda_1(\alpha)>-\infty$.

Consequently, if either $\ell_2=0$ or $1$ is not an isolated maximum of $m$ and $\mathcal{M}\not=\{0\}$, from the above argument we can conclude that
$$
-\infty<\liminf\limits_{\alpha\to\infty}\lambda_1(\alpha)
\leq\limsup\limits_{\alpha\to\infty}\lambda_1(\alpha)<\infty,
$$
and \eqref{4.1i} holds. Then, one can safely follow the argument of \cite[Theorem 1.2]{PZ2018} to establish  Theorem \ref{th1.2-n1}(iii). This completes the proof.
\end{proof}

\vskip15pt

\section*{Declaration of competing interest}
The authors declare that they have no known competing financial interests or personal relationships that could have appeared to influence the work reported in this paper.

\vskip15pt
\section*{Data availability statement}

Data sharing is not applicable to this article as no new data were created or analysed in this study.

\vskip15pt
\section*{Acknowledgments}

R. Peng was supported by NSF of China (No. 12271486, 12171176), and G. Zhang was supported by NSF of China (No. 12171176, 11971187).
The authors would like to thank Dr. Maolin Zhou for his valuable communications during the preparation of this paper. They also wish to express their gratitude to the reviewer for helpful comments, which have improved the presentation of the paper.
\bigskip

\end {document}